\documentclass{amsart}
\usepackage{amssymb,amsmath,amsthm,amsfonts}
\usepackage {latexsym,enumerate}
\usepackage{bbm}

\setlength{\textwidth}{6.1truein} \setlength{\textheight}{8.5truein}
\setlength{\oddsidemargin}{10.0mm}
\setlength{\evensidemargin}{10.0mm} \hyphenation{di-men-sion-al}
\linespread{1.5}

\allowdisplaybreaks
\sloppy




\newcommand{\nc}{\newcommand}
\nc{\les}{\lesssim}
\nc{\nit}{\noindent}
\nc{\nn}{\nonumber}
\nc{\D}{\partial}
\nc{\diff}[2]{\frac{d #1}{d #2}}
\nc{\diffn}[3]{\frac{d^{#3} #1}{d {#2}^{#3}}}
\nc{\pdiff}[2]{\frac{\partial #1}{\partial #2}}
\nc{\pdiffn}[3]{\frac{\partial^{#3} #1}{\partial{#2}^{#3}}}
\nc{\abs}[1] {\lvert #1 \rvert}
\nc{\cAc}{{\cal A}_c}
\nc{\cE}{{\cal E}}
\nc{\cF}{{\cal F}}
\nc{\cP}{{\cal P}}
\nc{\cV}{{\cal V}}
\nc{\cQ}{{\cal Q}}
\nc{\cGin}{{\cal G}_{\rm in}}
\nc{\cGout}{{\cal G}_{\rm out}}
\nc{\cO}{{\cal O}}
\nc{\Lav}{{\cal L}_{\rm av}}
\nc{\cL}{{\cal L}}
\nc{\cB}{{\cal B}}
\nc{\cZ}{{\cal Z}}
\nc{\cR}{{\cal R}}
\nc{\cT}{{\cal T}}
\nc{\cY}{{\cal Y}}
\nc{\cX}{{\cal X}}
\nc{\cXT}{{{\cal X}(T)}}
\nc{\cBT}{{{\cal B}(T)}}
\nc{\vD}{{\vec \mathcal{D}}}
\nc{\efield}{\mathcal{E}}
\nc{\vE}{{\vec \efield}}
\nc{\vB}{{\vec \mathcal{B}}}
\nc{\vH}{{\vec \mathcal{H}}}
\nc{\mR}{\mathcal R}
\nc{\mF}{\mathcal F}
\nc{\ty}{{\tilde y}}
\nc{\tu}{{\tilde u}}
\nc{\tV}{{\tilde V}}
\nc{\Pc}{{\bf P_c}}
\nc{\bx}{{\bf x}}
\nc{\bX}{{\bf X}}
\nc{\bXYZ}{{\bf XYZ}}
\nc{\bY}{{\bf Y}}
\nc{\bF}{{\bf F}}
\nc{\bS}{{\bf S}}
\nc{\dV}{{\delta V}}
\nc{\dE}{{\delta E}}
\nc{\TT}{{\Theta}}
\nc{\dPsi}{{\delta\Psi}}
\nc{\order}{{\cal O}}
\nc{\Rout}{R_{\rm out}}
\nc{\eplus}{e_+}
\nc{\eminus}{e_-}
\nc{\epm}{e_\pm}
\nc{\sgn}{\text{sgn}}
\nc{\eps}{\varepsilon}
\nc{\vnabla}{{\vec\nabla}}
\nc{\G}{\Gamma}
\nc{\w}{\omega}
\nc{\mh}{h}
\nc{\mg}{g}
\nc{\vphi}{\varphi}
\nc{\tlambda}{\tilde\lambda}
\nc{\be}{\begin{equation}}
\nc{\ee}{\end{equation}}
\nc{\ba}{\begin{eqnarray}}
\nc{\ea}{\end{eqnarray}}

\nc{\g}{\gamma}
\nc{\ol}{\overline}

\newtheorem{theorem}{Theorem}[section]
\newtheorem{lemma}[theorem]{Lemma}
\newtheorem{prop}[theorem]{Proposition}
\newtheorem{corollary}[theorem]{Corollary}

\nc{\pT}{\partial_T}
\nc{\pz}{\partial_z}
\nc{\pt}{\partial_t}
\nc{\la}{\langle}
\nc{\ra}{\rangle}
\nc{\infint}{\int_{-\infty}^{\infty}}
\nc{\halfwidth}{6.5cm}
\nc{\figwidth}{10cm}
\newcommand{\f}{\frac}

\nc{\nlayers}{L} \nc{\nsectors}{M}
\nc{\indicator}{\mathbf{1}}
\nc{\Rhole}{R_{\rm hole}}
\nc{\Rring}{R_{\rm ring}}
\nc{\neff}{n_{\rm eff}}
\nc{\Frem}{F_{\rm rem}}
\nc{\R}{\mathbb R}
\nc{\mJ}{\mathcal J}

\nc{\mE}{\mathcal E}
\nc{\C}{\mathbb C}
\nc{\Z}{\mathbb Z}
\nc{\N}{\mathbb N}
\nc{\DD}{\Delta}
\nc{\cD}{\mathcal D}
\nc{\lnorm}{\left\|}
\nc{\rnorm}{\right\|}
\nc{\rnormp}{\right\|_{\ell^{p,\eps}}}
\nc{\rar}{\rightarrow} 
\sloppy

\begin{document}
	
	\begin{abstract}
		
		We consider fractional Schr\"odinger operators $H=(-\Delta)^\alpha+V(x)$ in $n$ dimensions with real-valued potential $V$ when $n>2\alpha$, $\alpha>1$.  We show that the wave operators  
		extend to bounded operators on $L^p(\mathbb R^n)$ for all $1\leq p\leq\infty$  under conditions on the potential that depend on $n$ and $\alpha$ analogously to the case when $\alpha\in \mathbb N$.  As a consequence, we deduce a family of dispersive and Strichartz estimates for the perturbed fractional Schr\"odinger operator.
		
	\end{abstract}

	\title[Wave operators for fractional order  Schr\"odinger operators]{\textit{The $L^p$-continuity of wave operators for fractional order Schr\"odinger operators } } 
	
	\author[Erdo\smash{\u{g}}an, Goldberg, Green]{M. Burak Erdo\smash{\u{g}}an, Michael Goldberg and William~R. Green}
	\thanks{  The first author was partially supported by the NSF grant  DMS-2154031.  The second author is partially supported by Simons Foundation
		Grant 635369. The third author is partially supported by Simons Foundation
		Grant 511825. }
	\address{Department of Mathematics \\
		University of Illinois \\
		Urbana, IL 61801, U.S.A.}
	\email{berdogan@illinois.edu}
	\address{Department of Mathematics\\
		University of Cincinnati \\
		Cincinnati, OH 45221 U.S.A.}
	\email{goldbeml@ucmail.uc.edu}
	\address{Department of Mathematics\\
		Rose-Hulman Institute of Technology \\
		Terre Haute, IN 47803, U.S.A.}
	\email{green@rose-hulman.edu}

	\maketitle

	\section{Introduction}
	
	We study the non-local fractional  Schr\"odinger equation 
	\begin{align*}
	i\psi_t =(-\Delta)^\alpha  \psi +V\psi, \qquad x\in \R^n, \qquad \alpha>1, \quad  \alpha \not \in \mathbb N.
	\end{align*}
	We consider spatial dimensions $n>2\alpha $, and $V$ is a real-valued, decaying potential.  When $\alpha \not\in\mathbb N$, the non-local operator $(-\Delta)^\alpha$ is defined via the Fourier multiplier $|\xi|^{2\alpha}$, that is $(-\Delta)^\alpha f=\mathcal F^{-1}(|\xi|^{2\alpha}\widehat{f}(\xi))$.  We denote the free fractional Schr\"odinger operator by $H_0=(-\Delta)^\alpha  $, and the perturbed operator by $H=(-\Delta)^\alpha  +V(x)$. 
	
	Similar to the integer order Schr\"odinger operators, for the potentials we consider there is a Weyl criterion and $\sigma_{ac}(H)=\sigma_{ac}(H_0)=[0,\infty)$.  When $\alpha\neq 1$, decay of the potential is generally not sufficient to ensure the lack of eigenvalues embedded in the continuous spectrum for the fractional order operators.  In \cite{Cu embed}, Cuenin constructs examples where embedded eigenvalues  can occur for generalized Schr\"odinger operators. On the other hand, Ishida, L\H orinczi and Sasaki provided conditions on the potential when $0<\alpha<2$ in \cite{ILS} for which $H$ has no embedded eigenvalues.   We leave the lack of embedded eigenvalues as an overarching assumption.

	We study the $L^p$ boundedness of the wave operators, which are defined by
	$$
		W_{\pm}=s\text{\ --}\lim_{t\to \pm \infty} e^{itH}e^{-itH_0}.
	$$
	As in the classical   Schr\"odinger operators, see the work of Agmon, \cite{agmon}, H\"ormander, \cite{Hor} and  Schechter, \cite{Sche}, sufficient decay of the potential at infinity to ensures that the wave operators exist and are asymptotically complete, \cite{IW,ZHZ}.
	In particular we have the intertwining identity
	\begin{align}\label{eq:intertwining}
		f(H)P_{ac}(H)=W_\pm f((-\Delta)^\alpha  )W_{\pm}^*.
	\end{align}
	Here $P_{ac}(H)$ is the projection onto the absolutely continuous spectral subspace of $H$, and $f$ is any Borel function.
	 Asymptotic completeness and lack of embedded eigenvalues for magnetic fractional equations were studied in \cite{WD}.  For a more detailed discussion of the existence and asymptotic completeness see \cite{ZHZ}.

	Fractional Schr\"odinger equations have garnered interest in the physics literature, see for example \cite{Laskin,Laskin2}, where the Feynman path integral is taken over L\'evy flight paths in place of Brownian paths.  For applications in optics see \cite{Loptics}, further applications are considered in \cite{GX}.  Mathematically, one can view fractional Schr\"odinger equations as models of nonlocal dispersive equations.

	We use the notation $\la x\ra$ to denote $  (1+|x|^2)^{\f12}$, $\mathcal F(f)$ or $\widehat f$ to denote the Fourier transform of $f$.  We write $A\les B$ to say that there exists a constant $C$ with $A\leq C B$, and write $a-:=a-\epsilon$ and $a+:=a+\epsilon$ for some $\epsilon>0$ throughout the paper.  We use the norm $\|f\|_{H^{\delta}}=\|\la \cdot \ra^{\delta} \widehat f(\cdot)\|_2$.  We define zero energy to be regular if there are no non-trivial distributional solutions to $H\psi=0$ with $\la x \ra^{-\alpha -}\psi(x) \in L^2(\R^n)$ when $2\alpha<n\leq 4\alpha$ and $\psi\in L^2(\R^n)$ when $n>4\alpha$, which correspond to resonances or eigenvalues respectively, see \cite{EGG frac disp}.  We show   We first state a small potential result.
	\begin{theorem}\label{thm:small}
		Fix $\alpha>1$ and let $n>2\alpha $. 
		Assume that   $V$ is a real-valued potential on $\R^n$ and   fix $0<\delta\ll 1$. Then $\exists C=C(\delta,n,\alpha)>0 $ so that the wave operators  extend   to   bounded operators on $L^p(\R^n)$ for all $1\leq p\leq \infty$, provided that 
		\begin{enumerate}[i)]
			
			\item $\big\| \la \cdot \ra^{\frac{4\alpha +1-n}{2}+\delta} V(\cdot)\big\|_{2}<C$   when $2\alpha <n<4\alpha -1$,
			
			\item $\big\|\la \cdot \ra^{1+\delta}V(\cdot)\big\|_{H^{\delta}}<C$ when $n=4\alpha -1$,
			
			\item  $\big\|\mathcal F(\la \cdot \ra^{\sigma} V(\cdot))\big\|_{L^{ \frac{n-1-\delta}{{n-2\alpha}-\delta} }}<C$ for some $\sigma>\frac{2{n-4\alpha}}{n-1-\delta}+\delta$   when $n>4\alpha -1$,
			
			\item $H=(-\Delta)^\alpha  +V(x)$ has no positive
			eigenvalues and zero energy is regular. 
			
		\end{enumerate}
		
	\end{theorem}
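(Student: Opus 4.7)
The plan is to realize $W_\pm - I$ through the stationary representation of the wave operators and expand in a Born series whose convergence is driven by the smallness of $V$. Writing $R_V^\pm(\lambda)=(H-\lambda\mp i0)^{-1}$ and $R_0^\pm(\lambda)=(H_0-\lambda\mp i0)^{-1}$, iteration of the resolvent identity $R_V^+ = R_0^+ - R_0^+ V R_V^+$ inside the standard stationary formula produces $W_\pm = I+\sum_{k\ge 1}(-1)^k\Omega_k$, with
\[
\Omega_k f = \frac{1}{2\pi i}\int_0^\infty R_0^+(\lambda)V\,(R_0^+(\lambda)V)^{k-1}\,[R_0^+(\lambda)-R_0^-(\lambda)]\,f\,d\lambda.
\]
Since each hypothesis (i)--(iii) controls a single $V$-dependent norm $\|V\|_*$, the entire task reduces to proving $\|\Omega_k\|_{L^p\to L^p}\le (C_0\|V\|_*)^k$ for every $1\le p\le\infty$ and every $k\ge 1$; the smallness assumption then forces geometric convergence of the series in $\mathcal{B}(L^p)$.

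The heart of the matter is the $k=1$ case. After the change of variable $\rho=\lambda^{1/(2\alpha)}$, the spectral density $R_0^+(\lambda)-R_0^-(\lambda)$ concentrates in Fourier space on the sphere $\{|\xi|=\rho\}$ with weight proportional to $\rho^{n-2\alpha}$, so $\Omega_1$ has the structure of a spherical-average-type operator composed with multiplication by $V$. A Littlewood--Paley decomposition in $\rho$ separates low and high frequencies: near $\rho=0$ the condition $n>2\alpha$ makes the integrand locally integrable in $\rho$, while at high $\rho$ one applies stationary phase in the angular directions to reduce $\Omega_1$ to an oscillatory integral whose $L^p$ bound is controlled by the restriction of $\widehat{\langle\cdot\rangle^\sigma V}$ to dyadic spheres. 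This is precisely where the three hypotheses enter: (iii) supplies the $L^q$ Fourier norm demanded by a Stein--Tomas type restriction estimate, (i) replaces the oscillatory bound with a cruder weighted $L^2$ pairing (Cauchy--Schwarz in $x,y$) that closes because $n<4\alpha -1$, and (ii) handles the borderline dimension $n=4\alpha -1$ by using the $H^\delta$ regularity to absorb the logarithmic failure of the polynomial weight.

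The principal obstacle will be the endpoint estimates $\Omega_1:L^1\to L^1$ and $\Omega_1:L^\infty\to L^\infty$, since the oscillatory kernel of the spectral density is not absolutely integrable in either spatial variable. I expect to integrate by parts in $\rho$ a controlled number of times, paying with precisely the $\delta$ regularity built into each hypothesis, until the remaining expression becomes summable, and then interpolate to obtain all intermediate $p$. For the higher-order terms $\Omega_k$ with $k\ge 2$, the strategy is to peel off the outer factor $R_0^+ V\,[R_0^+-R_0^-]$ using the $k=1$ bound and to estimate each interior factor $R_0^+(\lambda)V$ in weighted $L^2$ via standard limiting absorption for $(-\Delta)^\alpha$; composing these yields the required $(C_0\|V\|_*)^k$ decay. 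Summing the resulting geometric series then produces a bounded extension of $W_\pm$ on $L^p(\mathbb R^n)$ for every $1\le p\le\infty$, finishing the proof.
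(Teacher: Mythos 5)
There is a genuine gap, and it sits exactly where the real work of the theorem lies: the terms of order $k\ge 2$. Your plan is to ``peel off'' the outer factor $R_0^+V[R_0^+-R_0^-]$ using the $k=1$ bound and to estimate each interior factor $R_0^+(\lambda)V$ in weighted $L^2$ by limiting absorption, then compose. This cannot work as stated. First, the composition is not even well defined as an operator-norm estimate: the outer piece is controlled on $L^p$ while the inner pieces would be controlled only as maps between weighted $L^2$ spaces, and there is no duality or trace structure that lets you multiply an $L^p\to L^p$ bound by weighted-$L^2\to L^2$ bounds to conclude $L^p\to L^p$ boundedness of the product, least of all at the endpoints $p=1,\infty$. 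Second, the limiting absorption principle requires pointwise decay $|V(x)|\les \la x\ra^{-\beta}$, which is \emph{not} implied by hypotheses (i)--(iii): for instance (iii) only controls $\|\mathcal F(\la\cdot\ra^{\sigma}V)\|_{L^q}$ with $\sigma$ possibly small, so the weighted resolvent bounds you invoke are simply unavailable in the small-potential theorem. Third, even granting such bounds, fixing $\lambda$ and estimating factors separately destroys the oscillation that couples the resolvent factors; the $d\lambda$ integral is not absolutely convergent factor-by-factor, so the decay needed to integrate in $\lambda$ must be extracted from the joint oscillatory structure of all $k$ resolvents, not from the $k=1$ analysis plus a crude interior estimate. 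Relatedly, your proposal never confronts the specific fractional difficulty that the symbol $|\xi-k|^{2\alpha}-|\xi|^{2\alpha}$ does not factor (no splitting identity), which is why the kernel analysis of the multiplier $(|\xi-k|^{2\alpha}-|\xi|^{2\alpha}-i\epsilon)^{-1}$ has to be carried out from scratch.

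For contrast, the paper obtains Theorem~\ref{thm:small} by proving the genuinely multilinear estimate of Theorem~\ref{thm:Born}: one passes to the adjoint $Z_J=W_J^*$, writes it via the Fourier multiplier operators $T^\alpha_{k,\epsilon}$ of \eqref{Tmke}, establishes uniform $L^1$ kernel bounds for $h_{k,\epsilon}$ (Lemma~\ref{lem:gk}, which is where the non-factoring symbol is handled), and then reduces everything to an $L^1$ bound on the $J$-linear quantity $F$ built from $K_J(k_1,\dots,k_J)=\prod_j \widehat V(k_j-k_{j-1})$, estimated by Hausdorff--Young, Hardy's inequality, and interpolation. It is this simultaneous treatment of all $J$ potential factors that yields the geometric constant $C^J\|V\|_*^J$ under exactly the norms in (i)--(iii); your restriction/stationary-phase sketch for $k=1$ is plausible in spirit, but without a multilinear mechanism of this kind the series cannot be summed, and the proof of the theorem does not close.
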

	The assumptions on the potential are the generalizations of the $\alpha=m\in \mathbb N$ case studied in \cite{EGWaveOp} obtained by bounding the contribution of the Born series terms in Section~\ref{sec:Born} below. 
	There are technical hurdles to overcome to adapt the argument to the non-local fractional Schr\"odinger operators, and the analysis is rather delicate.

	Furthermore, one may remove the smallness assumption above provided $V$ decays sufficiently at spatial infinity.  We write $n_{\star}$ to denote $n+4$ if $n$ is odd and $n+3$ if $n$ is even.  
	\begin{theorem}\label{thm:main}
		Fix $\alpha>1$ and let $n>2\alpha $.
		Assume that  $V$ is a real-valued potential on $\R^n$ so that
		\begin{enumerate}[i)]
			\item $|V(x)|\les \la x\ra^{-\beta}$ for some $\beta>n_{\star}$ ,

			\item $\|\la \cdot \ra^{1+}V(\cdot)\|_{H^{0+}}<\infty$   when $n=4\alpha -1$,
			
			\item for some $0<\delta\ll 1$ and $\sigma>\frac{2{n-4\alpha}}{n-1-\delta}$, $\|\mathcal F(\la \cdot \ra^{\sigma} V(\cdot))\|_{L^{ \frac{n-1-\delta}{{n-2\alpha}-\delta} }}<\infty$   when $n>4\alpha -1$,
			
			\item $H=(-\Delta)^\alpha  +V(x)$ has no positive
			eigenvalues and zero energy is regular. 
		\end{enumerate}
	Then,	the wave operators extend to bounded operators on $L^p(\R^n)$ for all $1\leq p\leq \infty$.  
 \end{theorem}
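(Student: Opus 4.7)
The plan is to combine the small-potential bounds of Theorem~\ref{thm:small} with a finite Born series expansion, reducing the boundedness of $W_+$ to the analysis of a single remainder term involving the full perturbed resolvent $R_V^{\pm}(\lambda)=(H-\lambda\mp i0)^{-1}$. Starting from the stationary formula
\[
(W_+-I)f \;=\; c\int_0^\infty R_V^+(\lambda)\, V\, \bigl[R_0^+(\lambda)-R_0^-(\lambda)\bigr]f\, d\lambda,
\]
I would iterate the resolvent identity $R_V^+=R_0^+-R_0^+V R_V^+$ to split
\[
W_+-I \;=\; \sum_{k=1}^{M} T_k \;+\; E_M,
\]
where $T_k$ is the $k$-th Born term and $E_M$ carries one factor of $R_V^\pm$ between $M$ copies of $V$. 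Here $M$ is chosen large, depending on $n$ and $\alpha$, to supply enough weights for the limiting absorption principle to absorb $R_V^\pm$.

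Each finite Born term $T_k$ can be bounded on $L^p(\R^n)$ for $1\le p\le\infty$ by running the arguments of Section~\ref{sec:Born} verbatim:  those estimates control $T_k$ by $C^k\|V\|_\ast^k$ for the norms appearing in (i)--(iii) of Theorem~\ref{thm:small}, and the smallness of $V$ there is used only to sum the resulting geometric series. Under hypotheses (i)--(iii) of Theorem~\ref{thm:main} each $\|V\|_\ast$ is finite, so finitely many Born terms are harmless.

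The remainder $E_M$ is the substantive step. Introducing a smooth cutoff $\chi_0(\lambda)$ supported near zero, decompose $E_M=E_M^{\rm low}+E_M^{\rm high}$. On the high-energy piece, the assumption that $H$ has no positive eigenvalues combined with a limiting absorption principle for $H$ yields weighted bounds of the form $\|\la x\ra^{-s} R_V^\pm(\lambda)\la x\ra^{-s}\|\les \lambda^{-\g}$ for appropriate $s>\tfrac12$ and $\g<1$; the required weights are supplied by $|V(x)|\les\la x\ra^{-\beta}$, $\beta>n_\star$, so that the $M$ outer copies of $V R_0^\pm$ absorb them and reduce the analysis to bounds already established for the Born terms. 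On the low-energy piece, I would use the symmetric resolvent identity
\[
R_V^\pm(\lambda) \;=\; R_0^\pm(\lambda) - R_0^\pm(\lambda)\, v\, M^\pm(\lambda)^{-1}\, v\, R_0^\pm(\lambda), \qquad M^\pm(\lambda)=U+vR_0^\pm(\lambda)v,
\]
with $v=|V|^{1/2}$ and $U=\sgn V$; regularity of zero energy ensures $M^\pm(\lambda)^{-1}$ admits a bounded Jensen--Nenciu-type expansion as $\lambda\downarrow 0$, from which Schwartz-kernel bounds for the low-energy part of $vR_V^\pm v$ comparable to those of $vR_0^\pm v$ can be extracted.

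The main obstacle is precisely this low-energy analysis in the non-integer $\alpha$ setting. The kernel of $R_0^\pm(\lambda)$ for $(-\Delta)^\alpha$ lacks the clean $\lambda$-asymptotic expansion available in the classical Schr\"odinger case, so isolating the singular and regular parts of $M^\pm(\lambda)^{-1}$ near $\lambda=0$ and converting them into pointwise kernel bounds compatible with the $L^1\!\to\! L^1$ and $L^\infty\!\to\! L^\infty$ mapping properties required for the outer Born-type integrals is the delicate part. The threshold assumption (iv), the polynomial decay $\beta>n_\star$, and the Fourier/Sobolev conditions in (ii)--(iii) are expected to enter together here, with $n_\star$ arising as the minimum decay that permits extraction of enough terms of the low-energy expansion to cancel the singularities of $R_0^\pm(\lambda)$ at $\lambda=0$ and make the full expression absolutely integrable on $L^1$ and $L^\infty$.
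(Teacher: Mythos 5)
Your plan is essentially the paper's own proof: bound finitely many Born terms by the Section~\ref{sec:Born} estimates, then split the remainder (carrying one full resolvent $\mR_V^\pm$) into low and high energies, treating the low-energy piece via the symmetric resolvent identity and a Neumann expansion of $M^\pm(\lambda)^{-1}$ around the regular threshold, and the high-energy piece via the limiting absorption principle under the no-positive-eigenvalue assumption. The low-energy kernel analysis you flag as the main obstacle is exactly what the paper carries out through the fractional resolvent representations of Proposition~\ref{prop:F}, Proposition~\ref{lem:low tail low d}, Lemma~\ref{lem:Gamma}, and the admissibility lemmas.
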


	These results are, to the best of our knowledge, the first results studying $L^p$-boundedness of the wave operators for the non-local fractional Schr\"odinger operators.

	  From the intertwining identity \eqref{eq:intertwining} one may obtain $L^p$-based mapping properties for the more complicated, perturbed operator $f(H)P_{ac}(H)$ from the simpler free operator $f((-\Delta)^\alpha)$.  The boundedness of the wave operators on $L^p(\R^n)$ for any choice of $p\geq 2$ with the function $f(\cdot)=e^{-it(\cdot)}$ yield the family of dispersive estimates
	\begin{corollary}\label{cor:disp ests}
		
		Under the conditions of Theorem~\ref{thm:small} or \ref{thm:main}, for any $1\leq p\leq 2$ we have the following family of dispersive bounds
		\begin{align}\label{eqn:disp est}
			\|e^{-itH}P_{ac}(H)\|_{L^{p}\to L^{p'}}\les |t|^{\frac{n}{\alpha }(\frac12-\frac1{p}) },
		\end{align}
		where $p'$ is the H\"older conjugate of $p$.  
		
	\end{corollary}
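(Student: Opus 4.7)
The plan is to read off Corollary \ref{cor:disp ests} as an immediate consequence of the $L^p$-boundedness of the wave operators supplied by Theorem \ref{thm:small} or Theorem \ref{thm:main}, the intertwining identity \eqref{eq:intertwining}, and the known free dispersive estimate for the fractional Laplacian. I would specialize \eqref{eq:intertwining} to the Borel function $f(\lambda) = e^{-it\lambda}$ to obtain the factorization
\begin{equation*}
e^{-itH}P_{ac}(H) = W_\pm \, e^{-it(-\Delta)^\alpha} \, W_\pm^{*},
\end{equation*}
which reduces the task to controlling the free propagator sandwiched between $L^p$-bounded wave operators.

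For the free propagator I would invoke the standard dispersive bound
\begin{equation*}
\|e^{-it(-\Delta)^\alpha}\|_{L^1 \to L^\infty} \les |t|^{-n/(2\alpha)},
\end{equation*}
valid for $\alpha>1$, obtained from stationary phase analysis of the kernel $\int_{\R^n} e^{i(x\cdot\xi - t|\xi|^{2\alpha})}\, d\xi$ (see \cite{EGG frac disp}). Interpolating this with the unitary $L^2 \to L^2$ bound via Riesz--Thorin yields
\begin{equation*}
\|e^{-it(-\Delta)^\alpha}\|_{L^p \to L^{p'}} \les |t|^{\frac{n}{\alpha}(\frac{1}{2} - \frac{1}{p})}, \qquad 1 \leq p \leq 2.
\end{equation*}

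By the hypothesis theorems, $W_\pm$ extends to a bounded operator on every $L^q(\R^n)$ with $1 \leq q \leq \infty$, and this property passes to the adjoints $W_\pm^{*}$ by duality. Composing the three factors in the intertwining identity produces
\begin{equation*}
\|e^{-itH}P_{ac}(H)\|_{L^p \to L^{p'}} \leq \|W_\pm\|_{L^{p'}\to L^{p'}} \, \|e^{-it(-\Delta)^\alpha}\|_{L^p \to L^{p'}} \, \|W_\pm^{*}\|_{L^p \to L^p} \les |t|^{\frac{n}{\alpha}(\frac{1}{2} - \frac{1}{p})},
\end{equation*}
which is precisely \eqref{eqn:disp est}. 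There is essentially no substantive obstacle at this stage: the analytical depth is concentrated in Theorems \ref{thm:small} and \ref{thm:main}, while the free dispersive estimate for $(-\Delta)^\alpha$ with $\alpha>1$ is by now classical. The only minor point worth remarking on is that one must verify the free estimate holds uniformly in $t\in\R\setminus\{0\}$ with the expected rate; this is standard since the symbol $|\xi|^{2\alpha}$ is smooth away from the origin and $2\alpha>2$ provides enough regularity near $\xi=0$ to handle the low-frequency contribution without loss.
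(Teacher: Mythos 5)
Your argument is exactly the paper's intended one: the corollary is read off from the intertwining identity \eqref{eq:intertwining} with $f(\lambda)=e^{-it\lambda}$, the $L^p$-boundedness of $W_\pm$ and $W_\pm^*$ from Theorems \ref{thm:small}/\ref{thm:main}, and the free bound $\|e^{-it(-\Delta)^\alpha}\|_{L^1\to L^\infty}\les |t|^{-n/(2\alpha)}$ (the $\gamma=n$ case of the scaling estimate noted in the introduction) interpolated with $L^2$ unitarity. No gaps; this matches the paper's approach.
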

	In particular in all dimensions $n>2\alpha $,  we have the global bounds
	$$
		\|e^{-itH}P_{ac}(H)\|_{L^{1}\to L^\infty}\les |t|^{-\frac{n}{2\alpha } },
	$$	 
	which extends the recent work of the authors, \cite{EGG frac disp}, to dimensions $n>4\alpha-1$.  
	Another consequence, following the seminal work of Ginibre and Velo, \cite{GVStrich}, is a family of Strichartz estimates:
	\begin{corollary}
		
		Under the conditions of Theorem~\ref{thm:small} or \ref{thm:main}, we have
		$$
			\|e^{-itH}P_{ac}(H)f\|_{L^q_t L^r_x}\les \|f\|_{L^2}, \quad \frac{2}{q}=\frac{n}{\alpha}\bigg(\frac{1}{2}-\frac{1}{r}\bigg), \quad 2\leq r<\infty.
		$$
		
	\end{corollary}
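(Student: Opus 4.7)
The plan is to deduce the Strichartz family from Corollary \ref{cor:disp ests} via the standard abstract $TT^*$ machinery of Ginibre--Velo (and Keel--Tao if one wants the endpoint). The only ingredients needed are (a) an energy estimate and (b) an untruncated dispersive decay estimate, and both are already in hand.

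First, since $V$ is real-valued, $H$ is self-adjoint on $L^2(\R^n)$, so $\{e^{-itH}\}_{t\in\R}$ is a strongly continuous unitary group; combined with the fact that $P_{ac}(H)$ is an orthogonal projection, this gives the energy bound
\be
\|e^{-itH}P_{ac}(H)f\|_{L^2_x}\leq \|f\|_{L^2_x}
\ee
uniformly in $t$. Second, Corollary~\ref{cor:disp ests} at the endpoint $p=1$ provides
\be
\|e^{-itH}P_{ac}(H)\|_{L^1_x\to L^\infty_x}\les |t|^{-\frac{n}{2\alpha}},
\ee
with exponent $\sigma:=\frac{n}{2\alpha}>1$, since $n>2\alpha$ by hypothesis. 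Interpolating these two bounds reproduces the full dispersive family in Corollary~\ref{cor:disp ests}, but more importantly provides exactly the pair of ingredients required by the abstract Strichartz theorem.

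Next, I apply the $TT^*$ argument to the operator $U(t):=e^{-itH}P_{ac}(H)$ acting from $L^2_x$ into itself. The composition $U(t)U(s)^*=e^{-i(t-s)H}P_{ac}(H)$ satisfies, by the two estimates above, $\|U(t)U(s)^*\|_{L^1\to L^\infty}\les |t-s|^{-\sigma}$. Since $\sigma>1$, the forbidden endpoint $(q,r,\sigma)=(2,\infty,1)$ of Keel--Tao does not occur here, and one obtains
\be
\|U(t)f\|_{L^q_tL^r_x}\les \|f\|_{L^2_x}
\ee
for all admissible pairs with $2\leq q\leq\infty$ and $\frac{2}{q}=\frac{n}{\alpha}(\frac12-\frac1r)$; the range $2\le r<\infty$ stated in the corollary is contained in this admissible region.

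I do not anticipate any genuine obstacle: the substantive analytic content is entirely absorbed into Corollary~\ref{cor:disp ests} (and hence into Theorems~\ref{thm:small} and \ref{thm:main}), and what remains is a quotation of the abstract Strichartz lemma. The only small item to verify is that we are in the non-endpoint regime, which is automatic from $n>2\alpha$; this is why no restriction like $r\le 2n/(n-2\alpha)$ beyond the admissibility relation appears in the statement, and why the range of exponents matches the free fractional Schr\"odinger group $e^{-it(-\Delta)^\alpha}$.
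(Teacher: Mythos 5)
Your argument is essentially the paper's own: the paper gives no separate proof, simply citing Ginibre--Velo and feeding in the unitarity of $e^{-itH}$ together with the dispersive bounds of Corollary~\ref{cor:disp ests} exactly as in your $TT^*$ setup. One caveat: the $TT^*$/Keel--Tao machinery only produces pairs with $q\geq 2$, i.e. $r\leq \frac{2n}{n-2\alpha}$ on the sharp admissible line $\frac2q=\frac{n}{\alpha}(\frac12-\frac1r)$, so your parenthetical claim that the whole range $2\leq r<\infty$ is contained in the admissible region is not literally correct; that restriction (equivalently $q\geq2$) should be understood as implicit in the statement.
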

	
	Noting that the Fourier transform of $e^{i|\xi|^{2\alpha}}|\xi|^{\gamma - n}$ is bounded  when $\frac12 < \alpha $ and $0  < \gamma \leq n\alpha$, by scaling the free operator satisfies the bounds
	$$
		\| e^{-it(-\Delta)^{\alpha}} (-\Delta)^{\frac{\gamma-n}2} \|_{L^1\to L^{\infty}}\les |t|^{-\frac{\gamma}{2\alpha}}.
	$$
	We have the following corollary.
	Similar bounds were proved in \cite{EGG wiener} for integer $\alpha\in (\frac{n}{4},\frac{n}2)$.
	\begin{corollary}\label{cor:disp ests2}
		
		Under the conditions of Theorem~\ref{thm:small} or \ref{thm:main}, for any $0<\gamma\leq n\alpha$ we have the following family of dispersive bounds
		\begin{align*}
			\|e^{-itH} H^{\frac{\gamma-n}{2\alpha}} P_{ac}(H)\|_{L^{1}\to L^{\infty}}\les|t|^{-\frac{\gamma}{2\alpha}}.
		\end{align*}

	\end{corollary}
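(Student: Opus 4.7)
The plan is to reduce this dispersive bound to the analogous bound for the free fractional flow via the intertwining identity \eqref{eq:intertwining}. Applying \eqref{eq:intertwining} with the Borel function $f(\lambda) = e^{-it\lambda}\lambda^{\frac{\gamma-n}{2\alpha}}$ on $[0,\infty)$, one obtains the factorization
$$e^{-itH}H^{\frac{\gamma-n}{2\alpha}}P_{ac}(H) = W_\pm\, e^{-it(-\Delta)^\alpha}(-\Delta)^{\frac{\gamma-n}{2}}\, W_\pm^*,$$
which realizes the operator on the left as the composition $L^1 \to L^1 \to L^\infty \to L^\infty$, where the outer maps are $W_\pm^*$ and $W_\pm$ respectively and the middle map is the free fractional Schr\"odinger flow composed with a Riesz-type potential.

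Next, I would invoke Theorem \ref{thm:small} or Theorem \ref{thm:main} to conclude that $W_\pm$ extends to a bounded operator on $L^1(\R^n)$ and $L^\infty(\R^n)$. Duality (with respect to the $L^2$ pairing) then gives the same boundedness for $W_\pm^*$, so the first and third factors in the factorization contribute norms independent of $t$.

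For the middle factor I would use the observation stated immediately before the corollary: since $\alpha>1$ and $0<\gamma\leq n\alpha$, the Fourier transform of $e^{i|\xi|^{2\alpha}}|\xi|^{\gamma-n}$ is bounded. A rescaling $\xi\mapsto t^{-1/(2\alpha)}\xi$ of the multiplier $e^{-it|\xi|^{2\alpha}}|\xi|^{\gamma-n}$ then produces
$$\|e^{-it(-\Delta)^\alpha}(-\Delta)^{\frac{\gamma-n}{2}}\|_{L^1\to L^\infty} \les |t|^{-\frac{\gamma}{2\alpha}},$$
and composing the three estimates yields the claimed bound.

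The one point that requires care is the interpretation of $H^{\frac{\gamma-n}{2\alpha}}P_{ac}(H)$ in the range $\gamma<n$, since in that case the exponent is negative and the operator is unbounded on $L^1$. I would resolve this by first verifying the intertwining identity on a dense subspace (e.g.\ Schwartz functions) through the functional calculus on the absolutely continuous subspace, and then treating the right-hand side of the factorization as the natural bounded realization of $e^{-itH}H^{\frac{\gamma-n}{2\alpha}}P_{ac}(H)$ as an operator from $L^1$ to $L^\infty$. Beyond this bookkeeping I do not anticipate any serious obstacle, as the corollary is essentially immediate once the $L^p$-boundedness of the wave operators is available.
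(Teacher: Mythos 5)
Your proposal is correct and follows exactly the route the paper intends: the corollary is stated as an immediate consequence of the intertwining identity \eqref{eq:intertwining} with $f(\lambda)=e^{-it\lambda}\lambda^{\frac{\gamma-n}{2\alpha}}$, the $L^1$ and $L^\infty$ boundedness of $W_\pm$ (hence of $W_\pm^*$ by duality), and the scaling bound $\|e^{-it(-\Delta)^\alpha}(-\Delta)^{\frac{\gamma-n}{2}}\|_{L^1\to L^\infty}\les |t|^{-\frac{\gamma}{2\alpha}}$ recorded just before the statement. Your remark on realizing the negative power via the functional calculus on a dense subspace is sensible bookkeeping and consistent with the paper's implicit argument.
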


	This work is inspired by previous work of the first and third authors, \cite{EGWaveOp,EGWaveOp2}, studying the boundedness of the wave operators  when $\alpha=m\in \mathbb N$.  There are several technical challenges in this adaptation, such as the lack of a ``splitting identity" that allows one to explicitly equate the integer order Schr\"odinger resolvents to the more-well known second order resolvent, see \eqref{eqn:Resol} below.
	
	There has been substantial work on the $L^p(\R^n)$ boundedness of the wave operators when $\alpha=m\in \mathbb N$, with recent growth in the literature when $m>1$.  The first higher order result, when $(m,n)=(2,3)$ was established by the second and third authors in 2020, \cite{GG4wave}.  The first and second authors extended the range to $(m,n)$ for all $n>2m$ in \cite{EGWaveOp,EGWaveOp2}.  Mizutani, Wan, and Yao studied the case of $(m,n)=(2,1)$ in \cite{MWY}, and studied the endpoints and effect of threshold resonances in the $(m,n)=(2,3)$ case in \cite{MWY3c,MWYr}.  Galtbayar and Yajima consider the case of $(m,n)=(2,4)$ in \cite{GY}.

	The study of the wave operators when $m>1$ partially built upon work on dispersive estimates. Feng, Soffer, Wu and Yao proved ``local dispersive estimates" on the solution operator as a map between weighted $L^2$-based  in \cite{soffernew}.  The third author and Toprak, \cite{GT4}, along with the first author, \cite{egt}, provided ``global dispersive estimates" considering the solution operator as a map from $L^1$ to $L^\infty$ for the fourth order operator in dimensions $n=4$ and $n=3$ respectively.  The authors recently proved dispersive estimates for scaling-critical potentials when $2m<n<4m$, \cite{EGG wiener}.
	
	The wave operators for the usual Schr\"odinger operator $-\Delta+V$, when $m=1$ are well-studied, beginning with the pioneering works of Yajima, \cite{YajWkp1,YajWkp2,YajWkp3}. Which inspired further work when $m=1$  in all dimensions $n\geq 1$, see \cite{JY2,JY4,DF,Miz, Yaj,YajNew} for example.  On $\mathbb R^3$, Beceanu and Schlag  obtained detailed structure formulas for the wave operators, \cite{Bec,BS,BS2}.  The $L^2$ existence and other properties of the higher order wave operators have been studied by many authors, including Agmon \cite{agmon}, Kuroda \cite{Kur1, Kur2}, H\"ormander \cite{Hor}, and Schechter, \cite{Sche,ScheArb}.   
	
	There has been much interest in non-linear fractional Schr\"odinger equations, see for example \cite{Cnonlin,HSnonlin,GZnonlin,FZnonlin,PSnonlin,Dnonlin,BGVnonlin}, studying well-posedness, blow-up and scattering.
	However, the linear analysis is more limited with   results focusing on the free equation $iu_t=(-\Delta)^su$.  Cho, Ozawa, and Xia studied dispersive and Strichartz estimates for the free operator assuming initial data in distorted Besov spaces, \cite{COX}.  Further study of Strichartz estimates for related operators may be found in \cite{CL,GW}, for example. To the best of our knowledge, the only results on dispersive estimates for a perturbed equation is that of the authors in \cite{EGG frac disp} and a recent paper by Taira, \cite{Taira}, which considers local time decay on weighted $L^2$ spaces for positive potentials.

	Our analysis relies on careful study of the resolvent operators, which are defined by $\mR_V(\lambda)=((-\Delta)^{\alpha}+V-\lambda)^{-1}$ and $\mR_0(\lambda)=((-\Delta)^{\alpha}-\lambda)^{-1}$. 
	Our usual starting point to study the wave operators is the stationary representation 
	\begin{align*}
		W_+u
		&=u-\frac{1}{2\pi i} \int_{0}^\infty \mR_V^+(\lambda) V [\mR_0^+(\lambda)-\mR_0^-(\lambda)] u \, d\lambda,
	\end{align*}
	here the superscripts `+' and `-' denote the usual limiting values as $\lambda$ approaches the positive real line from above and below, \cite{EGG frac disp}.
	Since the identity operator is bounded on $L^p$, we need only bound the second term involving the integral.  It is convenient to make the change of variables $\lambda \mapsto\lambda^{2\alpha }$ and consider the integral kernel of the operator
	\begin{align}\label{eqn:wave op defn}
		-\frac{\alpha}{\pi i} \int_0^\infty \lambda^{{2\alpha-1}}\mR_V^+(\lambda^{2\alpha })V[\mR_0^+-\mR_0^-](\lambda^{2\alpha })\, d\lambda.
	\end{align}
	Our result in Theorem~\ref{thm:small} follows by using resolvent identities to expand $\mR_V^+$ in an infinite series 
	and directly summing the series.  To remove the smallness assumption to show that the operator defined in \eqref{eqn:wave op defn} extends to a bounded operator on $L^p$ requires different strategies in the low ($0<\lambda\ll 1$) and high ($\lambda \gtrsim 1$) energy regimes.  To delineate these cases, we use the even, smooth cut-off function $\chi$ with $\chi(\lambda)=1$ for $|\lambda|<\lambda_0$ for some sufficiently small $\lambda_0\ll 1$, and $\chi(\lambda)=0$ for $|\lambda|>2\lambda_0$, as well as the complimentary cut-off $\widetilde \chi(\lambda)=1-\chi(\lambda)$.
	
	When $\alpha=m\in \mathbb N$, we have the splitting identity for $z\in\C\setminus[0,\infty)$, (c.f. \cite{soffernew})
	\be\label{eqn:Resol}
	\mR_0(z)(x,y):=((-\Delta)^m   -z)^{-1}(x,y)=\frac{1}{ mz^{1-\frac1m} }
	\sum_{\ell=0}^{m-1} \omega_\ell R_0 ( \omega_\ell z^{\frac1m})(x,y)
	\ee
	where $\omega_\ell=\exp(i2\pi \ell/m)$ are the $m^{th}$ roots of unity, $R_0(z)=(-\Delta-z)^{-1}$ is the usual ($2^{nd}$ order) Schr\"odinger resolvent.  For the fractional operators, when $\alpha\notin \mathbb N$, we lack this explicit relationship to the $m=1$ Schr\"odinger resolvents. We instead utilize the representations developed in \cite{EGG frac disp} stated in Proposition~\ref{prop:F} below as well as directly bounding Fourier multipliers corresponding to the Born series in Theorem~\ref{thm:Born} below.

	We note that the different assumptions on the potential we impose based on the size of $n$ versus $\alpha$ in Theorems~\ref{thm:small} and \ref{thm:main} are natural.  
	Smoothness of the potential is required for the integer order Schr\"odinger  operator  in high dimensions since the kernel free resolvent $R_0^\pm(\lambda^2)$ grows like $\lambda^{\frac{n+1}{2}-2m}$ as the spectral parameter $\lambda \to \infty$.  This causes the $L^1\to L^\infty$ dispersive estimates to fail in dimensions greater than $4m-1$ without some measure of smoothness on the potential, see the counterexample constructed by the second author and Visan \cite{GV}, later extended by the authors to the higher order case, \cite{EGG counter}.  In particular, when $n>4m-1$ one can construct a compactly supported potential $V\in C^\alpha(\R^n)$ for all $0\leq \alpha<\frac{n+1}{2}-2m-\frac{n}{p}$ for which the wave operators are unbounded on $L^p(\R^n)$ for $\frac{2n}{n-4m+1}<p\leq \infty$.     As in the integer order analysis, \cite{YajWkp1,EGWaveOp}, we impose a condition on the $\mathcal F L^r$ norm of the potential, which requires sufficient smoothness. The $\epsilon$-smoothness requirement in the case $n=4\alpha -1$ could be an artifact of our methods.

	We assume that zero energy is regular, that is there are no threshold resonances or eigenvalues.  These can be characterized in terms of distributional solutions to $H\psi=0$, with $\psi$ in weighted $L^2(\R^n)$ spaces, see Section 8 of \cite{soffernew} for the integer order case and \cite{EGG frac disp} for the fractional case.  The effect of zero energy resonances or eigenvalues on the $L^p$-boundedness of the integer order wave operators is well-studied.  In the classical $m=1$ case the wave operators are generically bounded on $1<p<\frac{n}{2}$ in the presence of a threshold obstruction when $n\geq 3$, while further orthogonality conditions allows one to obtain a larger range, \cite{YajNew,GGwaveop,YajNew2,YajNew3}.  In the higher order case, $m\in\mathbb N$ and $m>1$, the wave operators are bounded for $1<p<\frac{n}{2m}$ in the presence of zero energy eigenvalues when $n>4m$, \cite{EGL}.  In lower dimensions, there is a more complicated resonance structure, see \cite{CSWY} for odd $n$.  In the case of an eigenvalue only, if the zero energy eigenspace is orthogonal to $x^{\alpha} V(x)$ for multi-indices $|\alpha|<k_0$, the wave operators are bounded on $1\leq p<\frac{n}{2m-k_0}$ and one can recover $p=\infty$ if $k_0>2m$, \cite{EGL2}.  One expects analogous results would hold for the fractional operators in the presences of zero energy obstructions, we plan to address this in a future paper.

	The paper is organized as follows.  In Section~\ref{sec:Born} we begin by analyzing the Born series terms that arise by iterating the resolvent identity for the perturbed resolvent in the stationary representation, \eqref{eqn:wave op defn}.   Next in Section~\ref{sec:low}, we control the contribution of the tail of the Born series in the low energy regime, when the spectral parameter $\lambda$ is in a neighborhood of zero.
	In Section~\ref{sec:Minv}, we provide the technical arguments about inverse operators in the low energy regime to complete the low energy analysis.
	In Section~\ref{sec:high} we control the remainder in the high energy regime, when $\lambda\gtrsim 1$.

	\section{Born Series}\label{sec:Born}

	By iterating the resolvent identity, one has the expansion
	\begin{align}\label{eqn:born identity}
	\mR_V(z)=\sum_{J=0}^{2\ell}\big[ \mR_0(z)(-V\mR_0(z))^J \big]- (\mR_0(z)V)^\ell \mR_V(z) (V\mR_0(z))^\ell.
	\end{align}
	Consider the contribution of an arbitrary summand in the Born series to \eqref{eqn:wave op defn},
	$$
	W_J:=(-1)^{J+1}\frac{1}{2\pi i}\int_0^\infty   (\mR_0^+(\lambda)V)^{J} [\mR_0^+(\lambda)-\mR_0^-(\lambda)]\, d\lambda.
	$$
	In this section, we   modify the   proof in \cite{EGWaveOp}, which was inspired by Yajima's work at  \cite{YajWkp1} for the classical Schr\"odinger,  to control the Born series terms for the fractional Schr\"odinger operators. We prove that $W_J$
	extends to a bounded operator on $L^p(\R^n)$, $1\leq p\leq \infty$:
	\begin{theorem}\label{thm:Born}  Fix $\alpha>1$, a  natural number  $n>2\alpha$, $1\leq p\leq \infty$, and $0<\delta\ll 1$. Then $\exists C=C(\delta,n,\alpha)>0$ so that
		for $2\alpha <n<4\alpha -1$,  we have  
		$$\|W_J\|_{L^p\to L^p}\leq C^J    \| \la \cdot \ra^{ \frac{4\alpha +1-n}{2}+\delta} V(\cdot )\|_{L^2}^J, $$
		for $n=4\alpha -1$, we have 
		$$\|W_J\|_{L^p\to L^p}\leq C^J  \|  \la x\ra^{1+\delta} V\|_{H^{ \delta}}^{J}, $$
		for $n>4\alpha -1$, we have 
		$$\|W_J\|_{L^p\to L^p}\leq C^J  \| \mF(\la x\ra^{ \frac{2{n-4\alpha}}{n-1-\delta}+\delta} V)\|_{L^{\frac{n-1-\delta}{{n-2\alpha}-\delta}}}^J.$$
	\end{theorem}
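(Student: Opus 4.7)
The plan is to adapt Yajima's Fourier-analytic strategy (as carried out for integer $\alpha$ in \cite{EGWaveOp}) to the non-local fractional setting. After the substitution $\lambda\mapsto\lambda^{2\alpha}$ and application of the Plemelj--Sokhotski identity, the spectral density acquires the explicit form
\begin{align*}
[\mR_0^+-\mR_0^-](\lambda^{2\alpha})(z) \;=\; \frac{i\,\lambda^{n-2\alpha}}{\alpha(2\pi)^{n-1}}\, \widehat{d\sigma}(\lambda z),
\end{align*}
where $d\sigma$ is the surface measure on $S^{n-1}$. The Jacobian $\lambda^{2\alpha-1}$ from the substitution combines with the $\lambda^{n-2\alpha}$ weight to produce precisely the $\lambda^{n-1}$ needed for the polar change of variables $\xi=\lambda\omega$, which absorbs the joint $(\lambda,\omega)$-integral into a single integral over $\xi\in\R^n$ and collapses $W_J$ into a Fourier integral whose integrand is a $J$-fold iteration of $\mR_0^+(|\xi|^{2\alpha})V$.

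Next, writing each factor $\mR_0^+(|\xi|^{2\alpha})V$ via the Fourier representation of the free resolvent and performing the physical-space integration against the potentials produces a multilinear Fourier symbol of the schematic form
\begin{align*}
T_J(x,\xi)\;=\;c\!\int_{(\R^n)^J} e^{ix\cdot\zeta_1}\prod_{k=1}^{J}\frac{\widehat V(\zeta_k-\zeta_{k+1})}{|\zeta_k|^{2\alpha}-|\xi|^{2\alpha}-i0}\,d\zeta_1\cdots d\zeta_J,
\end{align*}
under the convention $\zeta_{J+1}=\xi$, so that $W_J f$ is recovered as the inverse Fourier transform of $T_J(x,\cdot)\widehat f$. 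By duality and Riesz--Thorin interpolation, combined with the symmetry between the $+$ and $-$ limits, it suffices to establish the $L^\infty\to L^\infty$ bound with the claimed dependence on $\|V\|_*^J$; the $L^1\to L^1$ and intermediate $L^p$ bounds then follow.

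The main step is to show that the kernel $K_J(x,y)$ of $W_J$ satisfies $\sup_x\|K_J(x,\cdot)\|_{L^1}\leq C^J\|V\|_*^J$. I would attack this layer by layer via Hausdorff--Young applied to each $\widehat V(\zeta_k-\zeta_{k+1})$ factor, with the precise choice of exponents dictated by the $\lambda$-growth of the surviving resolvent kernels. In the range $2\alpha<n<4\alpha-1$ the spectral factor $\lambda^{n-2\alpha}$ provides enough decay that a pure $L^2$ estimate closes, matching the weighted $L^2$-norm of case (i); at $n=4\alpha-1$ the critical logarithmic divergence is removed by the $\varepsilon$-smoothness encoded in the $H^\delta$-norm of case (ii); and for $n>4\alpha-1$ the sphere integral $\widehat{d\sigma}$ only gains $\lambda^{(1-n)/2}$ from stationary phase, so the $\lambda$-integration barely converges and one must use the sharp Hausdorff--Young exponent $r=(n-1-\delta)/(n-2\alpha-\delta)$ applied to $\widehat V$, matching case (iii).

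The main obstacle is that, unlike in the integer-order analysis of \cite{EGWaveOp}, the denominator $|\zeta|^{2\alpha}-|\xi|^{2\alpha}-i0$ does not factor into finitely many linear pieces via a splitting identity such as \eqref{eqn:Resol}, so Yajima's classical argument cannot be transplanted verbatim. To circumvent this I would rely on the oscillatory-integral representation of $\mR_0^+(\lambda^{2\alpha})$ established in \cite{EGG frac disp} (Proposition~\ref{prop:F}), whose phase and amplitude are precisely what is needed to apply Hausdorff--Young cleanly after propagating all $J$ resolvent factors through the chain. Careful bookkeeping of the $\lambda$-weights picked up from the spectral density and from each copy of $\mR_0^+$ is the technical heart of the proof and is responsible for the three dimension-dependent forms of the hypothesis on $V$.
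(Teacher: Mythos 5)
Your starting point (the stationary formula, the spectral density written as $c\,\lambda^{n-2\alpha}\widehat{d\sigma}(\lambda\cdot)$, and the multilinear symbol with factors $\widehat V(\zeta_k-\zeta_{k+1})$ over the fractional denominators) agrees with the paper's, but the two steps you rely on to close the argument do not hold up. The proposed ``main step,'' namely $\sup_x\|K_J(x,\cdot)\|_{L^1}\leq C^J\|V\|_*^J$, is not a viable reduction: the $\lambda$-integral defining $W_J$ runs over all of $(0,\infty)$ with no cutoff, and by the bounds of Proposition~\ref{prop:F} the integrand for a single Born factor is of size $\lambda^{n-1}\la\lambda r_1\ra^{\frac{n+1}{2}-2\alpha}\la\lambda r_2\ra^{-\frac{n-1}{2}}\approx\lambda^{n-2\alpha}$ as $\lambda\to\infty$, which is not absolutely integrable since $n>2\alpha$. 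So the kernel of $W_J$ is only conditionally defined, absolute (Schur-type) bounds cannot be obtained ``layer by layer'' by size, and indeed neither \cite{YajWkp1} nor \cite{EGWaveOp} (nor this paper) proves kernel admissibility for the Born terms. What the paper actually does is bound the adjoint $Z_J$ of \eqref{eq:ZJfin}: it is expressed as a superposition, weighted by a function $F$ of the parameters $(\omega_j,y_j,t_j)$, of compositions of translations, modulations and convolutions with $L^1$ kernels acting on $f$, and Minkowski's inequality gives $\|Z_Jf\|_{p}\les\|F\|_{L^1}\|f\|_{p}$ for every $1\leq p\leq\infty$ at once; no pointwise kernel estimate and no interpolation between $p=1$ and $p=\infty$ enters. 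Any route through absolute kernel bounds would in particular have to exploit oscillation jointly in $\lambda$ and the intermediate variables, and your outline supplies no mechanism for that.

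The second and more fundamental gap is that you give no way of handling the singular factors $(|\zeta_k|^{2\alpha}-|\xi|^{2\alpha}-i0)^{-1}$, which is exactly where the work lies once the splitting identity \eqref{eqn:Resol} is unavailable. Invoking Proposition~\ref{prop:F} does not fill this: that physical-space amplitude/phase expansion is used in the low- and high-energy sections, but nowhere in the Born series analysis, and it does not by itself produce the $\mF L^r$-type conditions on $V$. The paper's actual substitute is the weight $p_\omega$ of \eqref{eqn:pomega def}, which rewrites $|\xi-k|^{2\alpha}-|\xi|^{2\alpha}$ as $(|\xi-k|^2-|\xi|^2)/p_\omega(\xi/|k|)$, i.e.\ reduces the fractional symbol to the second-order one modulo a multiplier; Lemma~\ref{lem:gk} then shows that $h_{k,\epsilon}=\mF^{-1}\big(p_\omega(\cdot/|k|)e^{-\epsilon p_\omega(\cdot/|k|)}\big)$ and its first two $s=|k|$ derivatives lie in $L^1$ uniformly in $\epsilon$, with the correct scaling in $s$. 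Only with this in hand can one write $T^\alpha_{k,\epsilon}$ from \eqref{Tmke} as a $t$-integral of translated convolutions, pass to the limit $\epsilon\to0$ (Lemma~\ref{lem:Geps}), and then run the quantitative scheme that generates precisely the three hypotheses on $V$: Hausdorff--Young in the $t_j$ variables dual to $s_j$, H\"older in $\omega_j$, Hardy's inequality to transfer the $|k_j|$-weights onto derivatives of $\widehat V$, and multilinear complex interpolation to obtain the fractional weights $\la t_j\ra^{\sigma}$. Your closing sentence gestures at the Hausdorff--Young step, but without the $p_\omega$/$h_{k,\epsilon}$ device (or an equivalent control of the fractional denominators) and the $\epsilon$-regularization, the proposal does not reach the stage at which those exponent choices can be made, so the three stated bounds are not established.
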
	
	In what follows we will ignore most implicit constants; the factor $C^J$ accounts for their contribution depending on $n, \alpha$.  The small potential result, Theorem~\ref{thm:small}, follows from these inequalities.

  As in \cite{YajWkp1,EGWaveOp}, we bound the adjoint operator $Z_J=W_J^*$ on $L^p$, which for fixed $f\in \mathcal S$  is defined by 
	\be\label{eqn:ZJ defn}
	Z_Jf(x)=\lim_{ \epsilon_1 \to 0^+}\cdots \lim_{\epsilon_J\to 0^+} \lim_{\epsilon_0\to 0^+} Z_{J,\vec \epsilon,\epsilon_0} f(x),
	\ee
	where 
	$$
	Z_{J,\vec \epsilon,\epsilon_0}f (x)
	:= \frac1{2\pi i} \int_\R \big[\mR_0(\lambda-i\epsilon_0) V\mR_0 (\lambda+i\epsilon_1) \cdots V\mR_0 (\lambda+i\epsilon_J) f\big](x) d\lambda.
	$$
 As in \cite{YajWkp1}, it suffices to prove that the limit above exists in $L^p$ and the bounds stated in the theorem hold  for $f\in \mathcal S$ and $\widehat V\in C_0^\infty$.  
Following the steps in page 7 and 8 of  \cite{EGWaveOp}, we write 
\begin{multline}\label{eq:ZJfin}
	Z_Jf(x) \\=\lim_{\epsilon_1\to 0^+} \cdots \lim_{\epsilon_J\to 0^+}  
	\int_{ \R^{n} } T_{k_1,\epsilon_1}^\alpha\bigg\{  \int_{ \R^{n} } T_{k_2,\epsilon_2}^\alpha\bigg\{ \cdots \int_{\R^n}  K_J(k_1,k_2,\dots, k_J) T_{k_J, \epsilon_J }^\alpha f_{k_J} \, dk_J \bigg\}\cdots   \bigg\} dk_2 \bigg\} dk_1,
	\end{multline}
where $  K_J(k_1,k_2,\dots, k_J):= \prod_{j=1}^J \widehat V(k_j-k_{j-1})$ (with $k_0:=0$) and $f_{k_J}(x):=e^{ik_J\cdot x} f(x)$, and 
\be \label{Tmke}
		T_{k,\epsilon}^\alpha f  =\mathcal F^{-1}\bigg( \frac{\widehat f(\xi)}{|\xi-k|^{2\alpha}-|\xi|^{2\alpha}-i\epsilon} \bigg).
	\ee 
	Accordingly, we need to understand the operators $T^\alpha_{k,\epsilon}$.  Let
	\be \label{eqn:pomega def}
	p_\omega(\xi):=\frac{|\xi-\omega|^{2}-|\xi|^2}{|\xi-\omega|^{2\alpha}-|\xi|^{2\alpha}}, \text{ where }
	\omega=\frac{k}{|k|}\in S^{n-1}.
	\ee
	Unlike the case when $\alpha\in \mathbb N$, we cannot neatly factor here.
	We therefore have 
	$$
	T_{k,\epsilon}^\alpha f  
	=\frac1{2i|k|^{2\alpha-1}} \mathcal F^{-1}\bigg( \frac{p_\omega(\xi/|k|)\widehat f(\xi)}{ -\frac{i|k|}2+i\omega\cdot\xi-\frac{\epsilon p_\omega(\xi/|k|)}{2|k|^{2\alpha-1} } } \bigg).
	$$
	It is easy to see that $p_\omega(\xi)\geq 0$, in fact, the proof of Lemma~\ref{lem:gk} below implies that $p_\omega(\xi)\approx \la \xi\ra^{2-2\alpha} > 0$.   Writing 
	$$
	\frac{1}{ -\frac{i|k|}2+i\omega\cdot\xi-\frac{\epsilon p_\omega(\xi/|k|)}{2|k|^{2\alpha-1} } }=-\int_0^\infty e^{-\frac{i|k|t}2+ it\omega\cdot\xi} e^{-\frac{\epsilon p_\omega(\xi/|k|)}{2|k|^{2\alpha-1} }t} dt,
	$$	
	we obtain
	$$
	\mathcal F^{-1}\bigg( \frac{p_\omega(\xi/|k|) \widehat f(\xi)}{ -\frac{i|k|}2+i\omega\cdot\xi-\frac{\epsilon p_\omega(\xi/|k|)}{2|k|^{2\alpha-1} }} \bigg)(x)=-\int_0^\infty e^{ -\frac{i|k|t}2 } h_{k,\frac{\epsilon t}{2|k|^{2\alpha-1}}} * f (x+t\omega) dt,
	$$
	where $*$ denotes convolution and
	$$
	h_{k,\epsilon  } =\mF^{-1}\Big(p_\omega(\xi/|k|) e^{-  \epsilon   p_\omega(\xi/|k|)  }\Big).
	$$
	With this notation, we have 
	$$
	T_{k,\epsilon}^\alpha f(x) =\frac{i}{2|k|^{2\alpha-1}}\int_0^{\infty} \int_{ \R^{ n} } e^{-i|k|t/2} h_{k,\frac{\epsilon t}{2|k|^{2\alpha-1}}}(y ) f(x-y +t\omega)  \, dy  \, dt.
	$$
	To study the limit as $\epsilon\to 0^+$, we need the following lemma: 
	\begin{lemma}\label{lem:gk}
		
		We have the following bounds  (with $k=s\omega, s>0, \omega\in S^{n-1}$)
		$$
		\big\|  \sup_{\epsilon>0} h_{k,\epsilon } \big\|_{L^1}\les 1,
		$$
		$$
		\big\|\sup_{\epsilon>0} |\partial_s^j h_{s\omega,\frac{\epsilon }{ s^{2\alpha-1}}}| \big\|_{L^1}\les  s^{-j},\,\,\, j= 1,2. 
		$$
		Furthermore, $h_{k,\epsilon}$ converges to $h_k:=h_{k,0}$ and $\partial_s^j h_{s\omega,\frac{\epsilon }{ s^{{2\alpha-1}}}} $  converges to $\partial_s^j h_k  $ as $\epsilon\to 0$ a.e. and in $L^1$, and $h_k$ satisfies the same bounds above.    
	\end{lemma}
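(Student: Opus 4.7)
The plan is to reduce both claims to $\eps$-uniform estimates on the multiplier $p_\omega(\eta)e^{-\eps p_\omega(\eta)}$, exploiting the scaling $h_{k,\eps}(x) = |k|^n H_\eps(|k|x)$, where $H_\eps(y) := \int_{\R^n} e^{iy\cdot\eta}\,p_\omega(\eta)e^{-\eps p_\omega(\eta)}\,d\eta$. This identity gives $\|h_{k,\eps}\|_{L^1_x} = \|H_\eps\|_{L^1_y}$, so the first bound reduces to $\|H_\eps\|_{L^1}\les 1$ uniformly in $\eps\geq 0$. The same scaling makes the gain $s^{-j}$ in the derivative bounds essentially dimensional: each $\partial_s$ either lands on the Jacobian $s^n$ (saving $s^{-1}$), brings down a factor $x\cdot(\nabla H)(sx)$ (whose $L^1$-norm gains $s^{-1}$ after change of variables), or, via the chain rule on $\tilde\eps(s)=\eps/s^{2\alpha-1}$, produces $\partial_\eps H_{\tilde\eps}$ with a factor $s^{-1}$ from $\partial_s\tilde\eps$.

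The first step is to establish the symbol estimates for $p_\omega$. Writing $u(\xi)=1-2\omega\cdot\xi=|\xi-\omega|^2-|\xi|^2$, the expansion $(|\xi|^2+u)^{\alpha}-|\xi|^{2\alpha}=\alpha|\xi|^{2\alpha-2}u+O(u^2)$ for $|\xi|$ bounded away from $0$ shows that the factor $u$ is common to numerator and denominator of $p_\omega$, so $p_\omega$ is smooth across the hyperplane $\omega\cdot\xi=1/2$ and satisfies $p_\omega(\xi)\approx \la\xi\ra^{2-2\alpha}$ with bounds $|\partial_\xi^\beta p_\omega(\xi)|\les \la\xi\ra^{2-2\alpha-|\beta|}$ away from the origin; near $\xi=0$ the function is $C^{\lfloor 2\alpha\rfloor}$ (inherited from $|\xi|^{2\alpha}$) and locally bounded. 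Crucially $p_\omega\geq 0$, and derivatives of $e^{-\eps p_\omega}$ produce sums of products $\eps^j\prod_i\partial^{\beta_i}p_\omega$, all absorbed by the universal bound $(\eps p_\omega)^j e^{-\eps p_\omega}\les 1$ uniform in $\eps\geq 0$.

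Next I would obtain an $\eps$-uniform pointwise bound $|H_\eps(y)|\les G(y)$ with $G\in L^1(\R^n)$. For $|y|\les 1$, dyadically decompose in $\eta$: the piece $|\eta|\les 1$ contributes $O(1)$ by $\|p_\omega\|_{L^1(|\eta|\les 1)}\les 1$, and the high-frequency piece yields an integrable tail via a few integrations by parts since the symbol derivatives decay as $\la\eta\ra^{2-2\alpha-N}$. For $|y|\gg 1$, integrate by parts $N>n$ times against $e^{iy\cdot\eta}$; each derivative of the symbol costs at most $\la\eta\ra^{-1}$, and the $\eps$-uniformity is preserved by the observation above. Combining yields $G(y)=\min(1,|y|^{-N})\in L^1$. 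The $s$-derivative bounds follow by applying the same IBP strategy to the three integrals produced in the first paragraph (derivatives in $\eps$ merely bring down $-p_\omega$, which only improves the bookkeeping), and convergence as $\eps\to 0^+$ follows by dominated convergence with $G$ as majorant and the pointwise convergence of the integrands.

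The principal technical difficulty is maintaining $\eps$-uniformity through repeated integration by parts, since naive differentiation of $e^{-\eps p_\omega}$ introduces seemingly unbounded factors of $\eps^j$; the resolution is the universal bound $\sup_{\eps\geq 0}(\eps p_\omega(\eta))^j e^{-\eps p_\omega(\eta)}\les 1$, which absorbs them all. Balancing the required IBP order against the limited $C^{\lfloor 2\alpha\rfloor}$ regularity of $p_\omega$ at the origin is the place where the hypothesis $\alpha>1$ enters decisively --- it supplies both $L^\infty$-control near $\eta=0$ and the integrable decay $\la\eta\ra^{2-2\alpha}$ at infinity required for the high-frequency analysis.
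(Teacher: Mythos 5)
Your overall skeleton matches the paper's: reduce to $s=1$ by the scaling $h_{s\omega,\epsilon}(x)=s^nH_\epsilon(sx)$, convert $\partial_s$ into $(\nabla\cdot\xi)$ acting on the symbol, absorb all $\epsilon$-factors with $\sup_{t\geq0}t^je^{-t}\lesssim1$, and finish by dominated convergence. The gap is in the central pointwise estimate on $H_\epsilon$. First, you treat $p_\omega$ as a classical symbol away from the origin, claiming $|\partial_\xi^\beta p_\omega(\xi)|\lesssim\langle\xi\rangle^{2-2\alpha-|\beta|}$ there; this is false near $\xi=\omega$, because for $\alpha\notin\mathbb N$ the term $|\xi-\omega|^{2\alpha}$ in the denominator is only finitely differentiable at $\xi=\omega$, so $p_\omega$ has a \emph{second} point of limited regularity that your argument never addresses. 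Second, and more seriously, your large-$|y|$ bound rests on integrating by parts $N>n$ times; but at the two bad points $\xi=0$ and $\xi=\omega$ only roughly $\lfloor2\alpha\rfloor$ derivatives of the symbol exist, and since $n>2\alpha$ this is fewer than the $N$ you need, so those integrations by parts are simply not available. Indeed the asserted majorant $|H_\epsilon(y)|\lesssim|y|^{-N}$ with $N>n$ is false: the Fourier transform of the non-smooth piece (behaving like $\chi(\xi)|\xi|^{2\alpha}$ near $\xi=0$, and like $\chi(|\xi-\omega|)|\xi-\omega|^{2\alpha}$ near $\xi=\omega$) decays only like $|y|^{-n-2\alpha}$. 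You flag this "balancing" issue in your last paragraph, but flagging it is not resolving it; the observation that $\alpha>1$ gives $C^{\lfloor2\alpha\rfloor}$ regularity does not produce the $N>n$ integrations by parts your estimate requires.

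What is needed (and what the paper does) is a decomposition $p_\omega=p_1+p_2+p_3$ into pieces supported near $\xi=0$, near $\xi=\omega$, and away from both. Near each singular point one splits off a Schwartz (or smooth, compactly supported) main term and a remainder $g$ obeying homogeneous bounds $|\nabla^Ng(\xi)|\lesssim|\xi-\xi_0|^{\gamma-N}$ for \emph{all} $N$ with some $\gamma>0$; a scaling/dyadic lemma (Lemma~\ref{lem:small xi}) then yields the finite but integrable decay $\langle y\rangle^{-n-\gamma}$, which is the correct substitute for your $|y|^{-N}$. The piece $p_3$ away from the singular points genuinely is a symbol of order $2-2\alpha$, and Lemma~\ref{lem:FT} gives $\min(|y|^{-n+2\alpha-2},|y|^{-n-1})$, integrable precisely because $\alpha>1$; this is where $\alpha>1$ actually enters, rather than through regularity at the origin. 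The same decomposition must be carried along when differentiating $e^{-\epsilon p_\omega}$, since those derivatives again involve $\nabla p_\omega$ with the same limited regularity at $\xi=0,\omega$. Without this (or an equivalent) treatment of the two singular points, the proposed proof of the uniform $L^1$ bound does not go through.
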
 
	To prove this lemma, we need the following lemmas from \cite{EGG frac disp}:
	\begin{lemma}\label{lem:small xi}
		If $g$ compactly supported on $\R^n$, and is smooth on $\mathbb R^n\setminus\{0\}$ with $|\nabla^Ng(\xi)|\les |\xi|^{\gamma-N}$ for some $\gamma>-n$ and $N=0,1,\dots$ for $\xi\neq 0$. Then
		$|\nabla^j\widehat g(x)|\les \la x\ra^{-n-\gamma-j}$, j=0,1,2,...  In particular, $\widehat g\in L^1$ if $\gamma>0$.
	\end{lemma}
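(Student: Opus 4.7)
The plan is to reduce to the case $j=0$ and then split the Fourier integral at the scale $|\xi|\sim 1/|x|$, estimating the low-frequency piece directly and using repeated integration by parts on the high-frequency piece.

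First I would reduce to $j=0$. For any multi-index $\beta$ with $|\beta|=j$, the function $\xi^\beta g(\xi)$ is again compactly supported, smooth off the origin, and by the Leibniz rule satisfies $|\nabla^N(\xi^\beta g)(\xi)| \les |\xi|^{\gamma+j-N}$ for all $N$. Since $\partial^\beta \widehat g$ is, up to a sign, the Fourier transform of $\xi^\beta g$, it suffices to prove the scalar bound $|\widehat g(x)| \les \la x\ra^{-n-\gamma}$ under the hypothesis $\gamma > -n$.

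For $|x| \leq 1$ this is immediate: $|\widehat g(x)| \leq \|g\|_1 \les \int_{\mathrm{supp}\, g} |\xi|^\gamma\,d\xi < \infty$, because $\gamma > -n$ and $g$ has compact support. For $|x| \geq 1$ I would fix a smooth radial cutoff $\chi$ with $\chi\equiv 1$ on $\{|\xi|\leq 1\}$ and $\chi\equiv 0$ on $\{|\xi|\geq 2\}$, and split $g = g_1 + g_2$ with $g_1(\xi) = \chi(|x|\xi)\,g(\xi)$ and $g_2(\xi) = (1-\chi(|x|\xi))\,g(\xi)$. The inner piece is supported on $|\xi| \les 1/|x|$ and is controlled trivially by
\[
|\widehat{g_1}(x)| \leq \|g_1\|_1 \les \int_{|\xi|\les 1/|x|} |\xi|^\gamma\,d\xi \les |x|^{-n-\gamma},
\]
while $g_2$ is smooth on $\R^n$ and supported on $1/|x| \les |\xi| \les 1$. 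For $g_2$ I would integrate by parts $N$ times along a coordinate $\xi_k$ with $|x_k| \gtrsim |x|$, giving $|\widehat{g_2}(x)| \les |x|^{-N}\,\|\partial_{\xi_k}^N g_2\|_1$. Expanding by Leibniz produces either derivatives of the cutoff (each contributing a factor of $|x|$ but concentrated on the annulus $|\xi|\sim 1/|x|$ of volume $|x|^{-n}$) or pure derivatives of $g$ (of size $|\xi|^{\gamma-N}$). A direct computation shows both types of terms are bounded by $|x|^{N-n-\gamma}$, provided $N$ is chosen larger than $n+\gamma$ so that the tail $\int_{1/|x|\les|\xi|\les 1}|\xi|^{\gamma-N}d\xi$ is dominated by its lower endpoint. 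This gives $|\widehat{g_2}(x)| \les |x|^{-n-\gamma}$, and combining with the $g_1$ estimate yields the claimed pointwise decay. The $L^1$ conclusion for $\gamma > 0$ is then immediate by integrating $\la x\ra^{-n-\gamma}$ over $\R^n$.

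The only technical subtlety is balancing the two contributions so that the splitting at scale $|x|^{-1}$ is optimal, and checking that the annular cutoff-derivative terms in the Leibniz expansion do not overwhelm the main contribution — this is where the assumption that $g$ is smooth off the origin with scale-invariant bounds on \emph{every} derivative is used, as opposed to merely finite regularity.
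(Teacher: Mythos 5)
Your argument is correct. Note that the paper does not prove this lemma at all — it is quoted from the companion paper \cite{EGG frac disp} — but your proof (reduction to $j=0$ by the Leibniz rule, splitting at the scale $|\xi|\sim 1/|x|$, trivial bound on the low-frequency piece using $\gamma>-n$, and $N$-fold integration by parts with $N>n+\gamma$ on the high-frequency piece) is the standard argument for such statements and all the estimates check out, including the cutoff-derivative terms on the annulus $|\xi|\sim 1/|x|$.
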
 
	\begin{lemma}\label{lem:FT}
	
	Let $g$ be a smooth function, supported  away from zero on $\mathbb R^n$, that satisfies $|\nabla^N g(\xi)|\les |\xi|^{\gamma-N}$ for some $\gamma<0$ and $N=0,1,2,\dots$. Then $\widehat{g}$ is a smooth function on $\mathbb \R^n\setminus\{0\}$ satisfying 
	$$
		|\nabla^j\widehat{g}(x)|\les  \left\{
		\begin{array}{ll}
			|x|^{-\gamma-n-j} & \text{if }\gamma+j>-n,\\
			\la \log |x|\ra & \text{if }\gamma+j=-n,\\
			1 & \text{if }\gamma+j<-n.
		\end{array}
		\right. 
	$$
	Morever for $|x|\gtrsim 1$, $|\nabla^j\widehat{g}(x)|\les |x|^{-M}$ for all $M,j$.

\end{lemma}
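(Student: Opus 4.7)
The plan is to establish the pointwise bounds on $\nabla^j \widehat g$ by reducing to a single dyadic estimate and handling the two regimes $|x|\les 1$ and $|x|\gtrsim 1$ separately. First, note that $\nabla^j \widehat g(x) = \widehat G(x)$ where $G(\xi) = (-i\xi)^j g(\xi)$ is smooth on $\R^n$, supported in $\{|\xi|\geq c\}$ for some $c>0$, and satisfies $|\nabla^N G(\xi)| \les |\xi|^{(\gamma+j)-N}$ on its support. So it suffices to control $\widehat G$, and the three cases in the conclusion can be read off from the sign of $n + (\gamma + j)$.

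For $|x|\gtrsim 1$, repeated integration by parts through the identity $e^{-ix\cdot\xi} = (-1)^N|x|^{-2N}\Delta_\xi^N e^{-ix\cdot\xi}$ gives
$$
\widehat G(x) = \frac{(-1)^N}{|x|^{2N}} \int_{\R^n} e^{-ix\cdot\xi}\, \Delta_\xi^N G(\xi)\, d\xi.
$$
Because $g$ (hence $G$) is $C^\infty$ with support in $\{|\xi|\geq c\}$, it vanishes to infinite order at the inner boundary, so all boundary terms disappear. The right-hand integral converges as soon as $2N>n+\gamma+j$ thanks to $|\Delta_\xi^N G(\xi)|\les |\xi|^{\gamma+j-2N}$ on the support, yielding $|\widehat G(x)|\les |x|^{-2N}$. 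Since $N$ may be taken arbitrarily large, this is the claimed arbitrary polynomial decay.

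For $|x|\les 1$, I perform a Littlewood--Paley decomposition of $G$. Fix $\phi\in C_c^\infty(\R^n)$ supported in $\{1/2\leq |\xi|\leq 2\}$ with $\sum_{k\in\Z}\phi(2^{-k}\xi)=1$ on $\{|\xi|\geq c/2\}$, and set $G_k(\xi):=G(\xi)\phi(2^{-k}\xi)$, so that $G=\sum_{k\geq k_0}G_k$ for some $k_0$ depending on $c$. A direct Leibniz computation gives $|\nabla^N G_k(\xi)|\les 2^{k(\gamma+j-N)}$, so the rescaled profile $h_k(\eta):=G_k(2^k\eta)$ is smooth, supported in a fixed annulus, and satisfies $|\nabla^N h_k|\les 2^{k(\gamma+j)}$ uniformly in $k$. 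Hence $\widehat{h_k}$ is Schwartz with $|\widehat{h_k}(y)|\les 2^{k(\gamma+j)}\la y\ra^{-M}$ for every $M$, and unwinding the rescaling produces
$$
|\widehat{G_k}(x)| \les 2^{k(n+\gamma+j)}\, \la 2^k x\ra^{-M}.
$$

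Finally, I sum over $k\geq k_0$ by splitting at $k^*:=\log_2(1/|x|)$. On the low-frequency side ($2^k|x|\les 1$), $\la 2^kx\ra\approx 1$ and the contribution reduces to the geometric-type sum $\sum_{k_0\leq k\leq k^*}2^{k(n+\gamma+j)}$; on the high-frequency side ($2^k|x|\gtrsim 1$), $\la 2^kx\ra\approx 2^k|x|$ and choosing $M>n+\gamma+j$ yields a convergent geometric series dominated by its endpoint $k\approx k^*$, which evaluates to $|x|^{-(n+\gamma+j)}$. The three cases correspond exactly to the sign of $n+\gamma+j$: positive gives $|x|^{-(n+\gamma+j)}$ from both sides, negative gives bounded contributions from both sides (the low-frequency series converges and $|x|^{-(n+\gamma+j)}\les 1$ for $|x|\les 1$), and the endpoint $n+\gamma+j=0$ turns the low-frequency side into $\sum 1\approx \log(1/|x|)$, producing the logarithmic factor. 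The main obstacle is precisely this endpoint bookkeeping, since the borderline summability of the dyadic series is what generates the $\la\log|x|\ra$ in the stated conclusion.
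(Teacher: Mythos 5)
This lemma is not proved in the paper at all: it is quoted verbatim from the companion work \cite{EGG frac disp} ("we need the following lemmas from \cite{EGG frac disp}"), so there is no in-paper argument to compare against. Judged on its own, your proof is correct and is the standard way such symbol-type Fourier transform estimates are established: the Littlewood--Paley decomposition $G=\sum_k G_k$, the uniform annulus bound $|\widehat{G_k}(x)|\les 2^{k(n+\gamma+j)}\la 2^k x\ra^{-M}$ obtained by rescaling, and the dyadic summation split at $2^{k}\approx |x|^{-1}$ do reproduce all three regimes, with the borderline case $n+\gamma+j=0$ giving the logarithm exactly as you describe; smoothness of $\widehat g$ away from the origin also follows since the series of smooth functions $\sum_k\widehat{G_k}$ converges, together with all its derivatives, locally uniformly on $\R^n\setminus\{0\}$ and equals $\widehat G$ in $\mathcal S'$. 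One small caveat: in the $|x|\gtrsim 1$ step you write $\widehat G(x)=\int e^{-ix\cdot\xi}G(\xi)\,d\xi$ and integrate by parts directly, but when $\gamma+j\geq -n$ this integral is not absolutely convergent, so the manipulation needs justification (e.g.\ a cutoff $\chi(\xi/R)$ with $R\to\infty$, or integration by parts performed inside each compactly supported dyadic piece). In fact this step is superfluous: your own dyadic bound summed over $k\geq k_0$ with $M$ large gives $\sum_k 2^{k(n+\gamma+j-M)}|x|^{-M}\les |x|^{-M}$ for $|x|\gtrsim1$, so the rapid decay follows from the same decomposition and the separate argument can be dropped.
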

	\begin{proof}[Proof of Lemma~\ref{lem:gk}] We first prove the claims for $h_k$.
		Note that 
		$$
			h_{s\omega}(x)=s^n \mF^{-1} p_{\omega}(xs)=s^n h_{\omega}(sx)
		$$	
		Therefore $\|h_{s\omega}\|_{L^1}=\|h_\omega\|_{L^1}$ and we may take $s=1$.
		Without loss of generality, we also assume that $\omega=e_1$.
		We decompose $p_{e_1}$ into three pieces, when $\xi$ is near zero, near $e_1$, and away from both.  We write
		$p_{e_1}=p_1+p_2+p_3$ respectively defined by smooth cut-offs and define $h_i=\mF^{-1}p_i$.
		
		First, we consider $p_1(\xi)=p_{\omega}(\xi)\chi(100\xi)$ and write
		\be\label{eqn:phi g}
			p_1(\xi):=\phi(\xi)+g(\xi), \quad \text{where} \quad
			\phi(\xi)=\chi(100\xi)
			 \frac{1-2\xi_1}{|\xi-e_1|^{2\alpha}}.
		\ee
		Note that  $\phi\in\mathcal S(\R^n)$, and 
		$$
			g(\xi)=\bigg[\frac{1-2\xi_1}{|\xi-e_1|^{2\alpha}-|\xi|^{2\alpha}}-\frac{1-2\xi_1}{|\xi-e_1|^{2\alpha}}\bigg]\chi(100\xi),
		$$
		is easily seen to satisfy the hypotheses of Lemma~\ref{lem:small xi}.  So that 
		$$
			|\mathcal F[\chi(100 \cdot)p_{\omega}(\cdot)](x)|=|h_1(x)|\les \la x\ra^{-n-2\alpha}.
		$$
		Moreover, analogous bounds hold for $p_2(\xi)=\chi(100|\xi-\omega|)p_{\omega}(\xi)$ and hence $h_2$.
		
		We define $\chi_3(\cdot)=1-\chi(100 \cdot)-\chi(100|\cdot-\omega|)$, and consider $p_3(\xi)=\chi_3(\xi)p_{\omega}(\xi)$.  Here we note that
		$$
			p_{3}(\xi)=\chi_3(\xi)
			\frac{1-2\xi_1}{(|\xi|^2+1-2\xi_1)^{\alpha}-|\xi|^{2\alpha}}=\chi_3(\xi)|\xi|^{2-2\alpha} \eta\bigg(\frac{1-2\xi_1}{|\xi|^2} \bigg),
		$$
		where
		$$
			\eta(z)=\frac{z}{(1+z)^\alpha-1}\chi_{[-1+c,C]}(z).
		$$
		Here $\chi_{[-1+c,C]}(z)$ is smooth cut-off to the interval $[-1+c,C]$ for some $c,C>0$.  Note that, on the support of $\chi_3(\xi)$, we have $\frac{1-2\xi_1}{|\xi|^2}\in [-1+c,C]$.  Since $\eta$ is analytic and bounded in an open neighborhood of this interval in the complex plane (the singularity at $z=0$ is removable), $\eta$ has bounded derivatives to arbitrary order.\footnote{Also note that $\eta\gtrsim 1$ on the interval $ [-1+c,C]$, which implies that on the support of $\chi_3$, $p_\omega(\xi)\approx |\xi|^{2-2\alpha}$. Observing that $p_w \approx 1$ on the support of $\chi_1+\chi_2$ implies that  $p_\omega(\xi)\approx \la\xi\ra^{2-2\alpha}$.}  Using the chain rule, we see that
		$$
			|\nabla^N p_{3}(\xi)|\les \la \xi \ra^{2-2\alpha-N}.
		$$
		Therefore, using Lemma~\ref{lem:FT}, we conclude that 
		$$
		h_3(x)=\mF^{-1} (  p_{3} )(x)=O\big(\min(|x|^{-n-1},|x|^{-n+2\alpha-2})\big),
		$$
  which implies that 
		$h_3\in L^1$ (since $\alpha>1$).  This yields the claim for $j=0$.

		For $j>0$, note that 
		$$
		\partial_s\mF^{-1}p_3(sx)=x\cdot [\nabla\mF^{-1} p_{3}](xs)=\frac1s \mF^{-1}(\nabla\cdot\xi\,  p_{3}(\xi)) (xs).
		$$
		Similarly, $(s\partial_s)^\ell \mF^{-1}p_3(sx)=  \mF^{-1}((\nabla\cdot\xi)^\ell p_3(\xi)) (xs)$. Therefore,  
		$$
		|\partial_s^j s^{n} \mF^{-1}p_3(sx)|\les \sum_{\ell=0}^j s^{n+\ell-j} s^{-\ell}  | \mF^{-1}((\nabla\cdot\xi)^\ell p_3(\xi)) (xs)|.
		$$
		The claim for $h_3=\mF^{-1}p_3$ follows from this as above since 	$(\nabla\cdot\xi)^\ell p_3(\xi)$ satisfies the same bounds as $p_3(\xi)$. 
		
		We now turn to $p_1$, and the proof follows as above since $(\nabla \cdot \xi)^\ell g$ satisfies the same bounds as $g$.  For $p_2$, we write
		$$
			p_2(\xi)=\chi(100|\xi-e_1|)
			\frac{2\xi_1-1}{|\xi|^{2\alpha}}+g_2(\xi),
		$$
		and 
		$$
			g_2(\xi)=\bigg[\frac{1-2\xi_1}{|\xi-e_1|^{2\alpha}-|\xi|^{2\alpha}}-\frac{2\xi_1-1}{|\xi|^{2\alpha}}\bigg]\chi(100|\xi-e_1|).
		$$
		Now,
		$(\nabla \cdot \xi)^{\ell}g_2(\xi)$ 		satisfies the hypotheses of Lemma~\ref{lem:small xi} (centered at $\xi=e_1$ instead of zero) for $\ell\leq 2$ and $\gamma=2\alpha-2>0$.
		
		Now, we consider $h_{k,\epsilon}$.  
		Let $H_\omega(\epsilon,x)=\mF^{-1}\Big(p_\omega e^{-  \epsilon   p_\omega  }\Big)(x)$. 	
		We first consider 
		$$
			H_3(\epsilon,x):=\mF^{-1}\Big(\chi_3 p_\omega e^{-  \epsilon   p_\omega  }\Big)(x)=\mF^{-1}\Big(p_3 e^{-  \epsilon   p_\omega  }\Big)(x).
		$$
		Using the bounds on the derivative of $p_3$ and noting that 
		$\sup_{\alpha\geq 0} \alpha^N e^{-\alpha}\les 1$ for any $N\geq 0$, and that $|\nabla^j p_\omega|\les |\nabla^j p_3|$ on the support of $\chi_3$, and that $0\leq p_3\leq p_\omega$, we conclude that
		$$
		\big|\nabla_\xi^N  [p_3 (\xi) e^{-  \epsilon   p_\omega(\xi ) }]\big| \les \frac{1}{\la \xi \ra^{2\alpha-2+N}}, \qquad N=0,1,2,\dots
		$$
		Therefore we have  
		\be\label{eqn:homega bound}
			|H_3(\epsilon,x)|=|\mF^{-1} ( p_3 e^{-  \epsilon   p_\omega  } )(x)|\les    \min(|x|^{-n-1},|x|^{-n+2\alpha-2}) ,
		\ee
		uniformly in  $\epsilon>0$.
		This yields the claim for $j=0$ for the contribution of $H_3$ to $h_{k,\epsilon}=s^nH_\omega(\epsilon,sx)$.   
		
		We now turn to 
		$$
			H_1(\epsilon,x):=\mF^{-1}\Big(\chi_1 p_\omega e^{-  \epsilon   p_\omega  }\Big)(x)=\mF^{-1}\bigg(p_1e^{-\epsilon \widetilde p_1}\bigg)(x), 
		$$
		where $\widetilde p_1(\xi)=\chi(10\xi)p_\omega(\xi)$.
		Using \eqref{eqn:phi g}, we have $p_1=\phi+g$.  Defining $\widetilde \phi, \widetilde g$ analogously, we have 
		$0\leq \phi \leq \widetilde\phi$ and $0\leq g\leq \widetilde g$.  
		So that
		$$
			p_1e^{-\epsilon p_\omega}=\phi e^{-\epsilon\widetilde \phi}+\phi e^{-\epsilon\widetilde \phi}(e^{-\epsilon \widetilde g}-1) +ge^{-\epsilon\widetilde g}e^{-\epsilon\widetilde \phi}.
		$$
		The last two summand satisfy the hypotheses of Lemma~\ref{lem:small xi} while the first summand is in $\mathcal S(\R^n)$ with uniform in $\epsilon$ bounds on the derivatives.  Therefore,
		$$
			|H_1(\epsilon,x)|\les \la x\ra^{-n-2\alpha}
		$$
		uniformly in $\epsilon>0$.  A similar argument for $p_2$ yields the same bounds for $H_2(\epsilon,x)$.  Therefore, we conclude that 
		$$
			\sup_{\epsilon\geq 0} |H_{\omega}(\epsilon,x)|\les \min(|x|^{-n-1},|x|^{-n+2\alpha-2}),
		$$
		which yields the claim.
		
		Similarly, note that 
		$$\big|\nabla_\xi^N  [p_3 (\xi) (e^{-  \epsilon   p_\omega(\xi ) }-1) ]\big|\les \frac{\epsilon }{\la \xi \ra^{4\alpha-4+N}}, \qquad N=0,1,2,\dots.
		$$
		This implies the a.e. and $L^1$ convergence of the contribution of $H_3$ in $h_{k,\epsilon}$  to $h_k $.
		
		For $H_1$ we write 
		$$ 
			p_1 (\xi) (e^{-  \epsilon   p_\omega(\xi ) }-1)=
			\phi(\xi)e^{-\epsilon\widetilde \phi(\xi)}\bigg( 
			e^{-\epsilon \widetilde g(\xi)}-1			\bigg)
			+ \phi(\xi)\bigg(e^{-\epsilon \widetilde \phi(\xi)}-1\bigg)+g(\xi)\bigg(
			e^{-\epsilon \widetilde p_1(x)}-1
			\bigg).
		$$
		The first and third summands satisfy the hypotheses of Lemma~\ref{lem:small xi} with an additional factor of $\epsilon$.  The second summand is in $\mathcal S(\R^n)$ with all derivatives bounded by $\epsilon$.  A similar argument applies for the contribution of $H_2$, hence $h_{k,\epsilon}$ converges a.e. and in $L^1$ to $h_k$.
		
		For the $j$th derivative of $h_{k,\epsilon}$, by chain rule and scaling as above, it suffices to prove that the $L^1$ norms of $\sup_{\epsilon} \epsilon^{j_1}\partial_\epsilon^{j_1}  (x\cdot\nabla_x)^{j_2} \mF^{-1}[p_{\omega}e^{-\epsilon p_\omega}](x)$ are $\les 1 $ for $j_1,j_2\geq 0$ with $j_1+j_2\leq 2$. Noting that
		$|\epsilon^{j_1}\partial_{\epsilon}^{j_1}e^{-\epsilon p_\omega}|= |(-\epsilon p_\omega)^{j_1}e^{-\epsilon p_\omega}|\les e^{-\epsilon p_\omega/2}$, the arguments above remain valid.   Convergence of the $s$ derivatives of  $h_{k,\epsilon}$ follow similarly. 
	\end{proof}

	Using Lemma~\ref{lem:gk} and dominated convergence theorem, we conclude that for $f\in\mathcal S$
	and for all $x\in \R^n$,  
	$$	\lim_{\epsilon\to 0^+} T_{k,\epsilon}^\alpha f(x)= \frac{i}{2|k|^{2\alpha-1}} \int_0^\infty e^{-it|k|/2} \int_{\R^n}h_k(y )f(x-y +t\omega) \, dy  \, dt:= T_k^\alpha f(x). $$
	Following the notation of \cite{YajWkp1}, for $\epsilon>0$, let 
	$$
	G_\epsilon f=\int_{\R^n} T_{k,\epsilon}^\alpha f(k,\cdot) dk,\qquad   G_0 f  =\int_{\R^n} T_{k}^\alpha f (k,\cdot) dk,
	$$
	Note that
	\be\label{eq:geps}
		G_\epsilon f(x)=\int_{\R^n}\frac{i}{2|k|^{2\alpha-1}}\int_0^{\infty} \int_{ \R^{ n} } e^{-i|k|t/2}  h_{k,\frac{\epsilon t}{2|k|^{2\alpha-1}}}(y ) f(k,x-y +t\omega)  \, dy  \, dt\,dk.
	\ee
	Passing to polar coordinates, $k=s\omega$, and changing the order of integration, we have 
	$$
		G_\epsilon f(x)=\frac{i}2\int_{S^{n-1}} \int_0^{\infty} 	F_\epsilon( t,\omega,x)  \, dt\,d\omega, 
	$$
	where
	$$
		F_\epsilon( t,\omega,x)= \int_0^\infty  e^{-is t/2} s^{n-2\alpha}  	h_{s\omega,\frac{\epsilon t}{2s^{2\alpha-1}}}* f(s\omega, \cdot)(x +t\omega ) \, ds.
	$$
	Also note that $G_0f$   satisfies the same formula  with  $F_0 $  replacing  $F_\epsilon $.

	\begin{lemma}\label{lem:Geps} 
		Let  $\epsilon>0$, $1\leq p \leq \infty$,  and  $f(k,x)\in \mathcal S(\R^n_k,\mathcal S(\R^n_x))$. For all  $n>2\alpha+1$, we have
		$$ \| G_\epsilon f\|_{L^p}  \leq C_{n,\alpha} \int_{\R^{n}}  
		\la k\ra^{ 2 }  
		\sum_{j =0}^2   \|D_k^{j }f(k,\cdot)\|_{L^p}  \frac{dk}{|k|^{1+2\alpha}}.
		$$
		For $2\alpha<n\leq 2\alpha+1$, we have 
		$$ \| G_\epsilon f\|_{L^p}  \leq C_{n,\alpha} \int_{\R^{n}}  
		\la k\ra^{\frac{n}2-\alpha+1}  
		\sum_{j =0}^3   \|D_k^{j }f(k,\cdot)\|_{L^p}  \frac{dk}{|k|^{\frac{n}2+\alpha}}.
		$$ 
		Moreover, $ G_\epsilon f\to G_0 f$  in $L^p$ as $\epsilon \to 0^+$.
	\end{lemma}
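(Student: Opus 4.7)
The strategy is to bring the $L^p_x$-norm inside the angular and time integrations via Minkowski's inequality, reducing matters to estimating $\int_0^\infty \|F_\epsilon(t,\omega,\cdot)\|_{L^p_x}\, dt$ and then integrating in $\omega$. Integration by parts in $s$ in the oscillatory integral defining $F_\epsilon$ generates $t^{-N}$ decay at the cost of differentiating the amplitude
$g_\epsilon(s):=s^{n-2\alpha}\, h_{s\omega,\, \epsilon t/(2s^{2\alpha-1})} *_y f(s\omega,\cdot)(x+t\omega)$.
Young's inequality combined with the $L^1$ bounds $\|\partial_s^b h_{s\omega,\cdot}\|_{L^1}\les s^{-b}$ from Lemma~\ref{lem:gk} and the chain-rule identity $\partial_s^c f(s\omega,\cdot)=(\omega\cdot \nabla_k)^c f(s\omega,\cdot)$ yields, via Leibniz expansion,
$$
\|\partial_s^N g_\epsilon(s,t,\omega,\cdot)\|_{L^p_x}\les s^{n-2\alpha-N}\sum_{j=0}^N s^j \|D_k^j f(s\omega,\cdot)\|_{L^p}
$$
uniformly in $\epsilon$ and $t$.

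For the case $n>2\alpha+1$, both $g_\epsilon(0)$ and $g_\epsilon'(0)$ vanish, so two integrations by parts are valid with no boundary contributions, giving $\|F_\epsilon(t,\omega,\cdot)\|_{L^p_x}\les t^{-2}\int_0^\infty s^{n-2\alpha-2}\sum_{j\leq 2}s^j \|D_k^j f(s\omega,\cdot)\|_{L^p}\, ds$. Splitting the $t$-integration at $t=1$ and using the trivial zero-IBP bound $\|F_\epsilon\|_{L^p_x}\les\int s^{n-2\alpha}\|f(s\omega,\cdot)\|_{L^p}\, ds$ on $t\in(0,1)$ combines to $\int_0^\infty \|F_\epsilon(t,\omega,\cdot)\|_{L^p_x}\, dt \les \int_0^\infty s^{n-2\alpha-2}\la s\ra^2 \sum_{j\leq 2}\|D_k^j f(s\omega,\cdot)\|_{L^p}\, ds$. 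Integrating in $\omega$ and converting via $k=s\omega$ and $ds\, d\omega=|k|^{-(n-1)}\, dk$ reproduces the first claimed bound.

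For $2\alpha<n\leq 2\alpha+1$, the value $g_\epsilon'(0)$ carries a singular $s^{n-2\alpha-1}$ contribution and a clean second IBP is unavailable. We instead combine one IBP (valid because $g_\epsilon(0)=0$) with an $L^2$-Plancherel estimate in $t$: treating $F_\epsilon(\cdot,\omega,x)$ as a one-sided Fourier transform in $s$ of $g_\epsilon(\cdot,\omega,x)$ yields $\|F_\epsilon\|_{L^2_t}\les \|g_\epsilon\|_{L^2_s}$ and, after one IBP, $\|tF_\epsilon\|_{L^2_t}\les \|g_\epsilon'\|_{L^2_s}$. Cauchy--Schwarz with $\la t\ra^{-1/2-\delta}\in L^2_t$ converts these into the pointwise bound $\int_0^\infty |F_\epsilon(t,\omega,x)|\, dt \les \|g_\epsilon(\cdot,\omega,x)\|_{L^2_s}+\|g_\epsilon'(\cdot,\omega,x)\|_{L^2_s}$. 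One then interpolates this ``half-power'' estimate against the trivial $L^1_s$ bound using a $t$-adapted cutoff $\chi(st)$ at scale $s\sim 1/t$: on $s\lesssim 1/t$ the trivial no-IBP bound is used, while on $s\gtrsim 1/t$ the cutoff $1-\chi(st)$ vanishes at $s=0$ and permits two integrations by parts, with derivatives of $\chi(st)$ producing extra factors of $t$ localized to $s\sim 1/t$ that contribute one additional derivative of $f$. Passing to the $L^p_x$-norm via Minkowski and balancing the two regimes yields the weight $s^{n/2-\alpha-1}\la s\ra^{n/2-\alpha+1}$ together with the sum up to $j\leq 3$ derivatives of $f$, which after the change of variables matches the second claimed bound.

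Finally, the convergence $G_\epsilon f\to G_0 f$ in $L^p$ follows from the uniform-in-$\epsilon$ bounds just established together with the a.e.\ and $L^1$-convergence of $h_{s\omega,\epsilon t/(2s^{2\alpha-1})}$ (and of $\partial_s h$) to $h_{s\omega}$ established in Lemma~\ref{lem:gk}, via dominated convergence applied to the representation \eqref{eq:geps}.  The main obstacle is the case $2\alpha<n\leq 2\alpha+1$: the clean two-IBP argument fails because of the non-integrable $s^{n-2\alpha-1}$ singularity of $g_\epsilon'$ near zero, so one must interpolate between the trivial and two-IBP bounds via $L^2$-Plancherel and a $t$-adapted cutoff, an interpolation that both produces the ``half-power'' weight $|k|^{-(n/2+\alpha)}$ and forces the appearance of the third derivative of $f$.
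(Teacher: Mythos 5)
Your treatment of the case $n>2\alpha+1$ (trivial bound for $t\les 1$, two integrations by parts in $s$ for $t\gg1$, Young's inequality with the $L^1$ bounds of Lemma~\ref{lem:gk}) and your argument for $G_\epsilon f\to G_0f$ coincide with the paper's. The gap is in the case $2\alpha<n\leq 2\alpha+1$, which you yourself identify as the main obstacle. Your ``half-power'' estimate rests on Plancherel in $t$, treating $F_\epsilon(\cdot,\omega,x)$ as a one-sided Fourier transform in $s$ of $g_\epsilon$. But $g_\epsilon$ depends on $t$ in two ways: through the regularization $h_{s\omega,\,\epsilon t/(2s^{2\alpha-1})}$ and through the evaluation point $x+t\omega$ of the convolution. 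Hence $F_\epsilon(t,\omega,x)$ is \emph{not} the Fourier transform of a fixed function of $s$, and the inequalities $\|F_\epsilon\|_{L^2_t}\les\|g_\epsilon\|_{L^2_s}$, $\|tF_\epsilon\|_{L^2_t}\les\|g_\epsilon'\|_{L^2_s}$ do not follow from Plancherel. Even granting them, $\|g_\epsilon'\|_{L^2_s}$ is generally infinite on part of the claimed range: $g_\epsilon'(s)\sim s^{n-2\alpha-1}$ near $s=0$ is square integrable only when $n-2\alpha>\tfrac12$, whereas the lemma covers, e.g., $n=4$, $\alpha=1.9$. Finally, your accounting of the third derivative is wrong: derivatives of the cutoff $\chi(st)$ fall on the cutoff and therefore \emph{reduce} the number of derivatives landing on $f$; they do not ``contribute one additional derivative of $f$.''

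The salvageable part of your sketch is the cutoff at scale $s\sim 1/t$: the trivial bound on $s\les 1/t$, and two integrations by parts on $s\gtrsim 1/t$ (legitimate since $1-\chi(st)$ vanishes near $s=0$), with the cutoff-derivative terms localized to $s\sim 1/t$ and carrying powers of $t$. Carrying out the $t$-integration for $t\gg1$ and exchanging the $s$ and $t$ integrals, this yields a bound with weight $\min(1,s)\la s\ra^{2}s^{n-2\alpha-2}$ and only two derivatives of $f$, which is at least as strong as the claimed estimate — but you never perform this computation; the only step you spell out in detail is the invalid Plancherel one, and the final bookkeeping (``balancing the two regimes yields the weight $s^{n/2-\alpha-1}\la s\ra^{n/2-\alpha+1}$ \dots'') is asserted, not derived. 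For comparison, the paper proceeds differently: after one integration by parts it applies the half-shift identity $\int_0^\infty e^{-ist/2}K(s)\,ds=\tfrac12\int_0^{2\pi/t}e^{-ist/2}K(s)\,ds+\tfrac12\int_0^\infty e^{-i(s+2\pi/t)t/2}[K(s+2\pi/t)-K(s)]\,ds$, which is an identity in the $s$-integral for each fixed $t$ (so the $t$-dependence of the amplitude is harmless), gains $t^{-\eta}$ with $\eta=\tfrac n2-\alpha$ by interpolating the difference $K(s+2\pi/t)-K(s)$ between sup and derivative bounds, and the third derivative of $f$ enters only when the resulting $\sup_{s<\rho<s+1}$ of derivative norms is converted into an integral via the fundamental theorem of calculus.
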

	
	We note that the exponents here on the factor of $|k|$ in the denominator differ slightly from the analogous bounds when $\alpha=m\in \mathbb N$ in Lemma~2.3 of \cite{EGWaveOp}.  This is the result of not bounding positive powers of $s=|k|$ in the proof below $\langle k \rangle$, this choice does not affect the subsequent analysis.
	
	\begin{proof}
		Note that
		$$\big\|  F_\epsilon( t,\omega,x) \big\|_{L^p_x} \les    \int_0^\infty s^{n-2\alpha}  \| \sup_\epsilon h_{s\omega,\epsilon}\|_{L^1}
		\|f(s\omega, \cdot)\|_{L^p} \, ds  \\ \les     \int_0^\infty s^{n-2\alpha}  
		\|f(s\omega, \cdot)\|_{L^p} \, ds,
		$$
		which suffices for the integral in $0<t\les 1$. 
		For $t\gg 1$ and $n>2\alpha+1$,   we integrate by parts  twice in the $s$ integral to obtain 
		$$  |F_\epsilon( t,\omega,x) | \les  \frac{1}{t^2}\int_{\R^ n}\int_0^\infty \big|   \partial_s^2 \big( s^{n-2\alpha} h_{s\omega,\frac{\epsilon t }{2 s^{2\alpha-1}}}(y ) f(s\omega,x-y +t\omega)\big)\big|  \, ds\, dy. 
		$$ 
		Let $H_{s\omega }(y)=|\sup_{\epsilon>0, j=0,1,2  } s^j \partial_s^j  h_{s\omega,\frac{\epsilon t }{2 s^{2\alpha-1}}}(y )|$.  Using this we obtain the bound 
		\begin{multline*}  
		|F_\epsilon( t,\omega,x) | \les  
		\frac{1}{t^2}\int_{\R^n}\int_0^\infty \la s\ra^2 s^{n-2\alpha-2}     H_{s\omega}(y) \sum_{j=0}^2 \big|   \partial_s^j f(s\omega,x-y +t\omega) \big|  \, ds\, dy \\
		\les  \frac{1}{t^2} \int_{ \R^n } \int_0^\infty  H_{s\omega}(y) \la s\ra^2 s^{n-2\alpha-2}    \sum_{j=0}^2 \big|   \partial_s^j f(s\omega,x-y +t\omega) \big|  \, ds\, dy. 
		\end{multline*}
		
		By Lemma~\ref{lem:gk}, $\|H_{s\omega}\|_{L^1}\les 1$, therefore uniformly in $t$ and $\omega$, we have  
		$$\big\|  F_\epsilon( t,\omega,x) \big\|_{L^p_x} \les \frac1{\la t\ra^2}
		\int_0^\infty \la s\ra^2 s^{n-2\alpha-2}       \sum_{j=0}^2 \big\|   \partial_s^j f(s\omega,\cdot)\big\|_{L^p}  \, ds,
		$$
		which implies the claim for $G_\epsilon f$ when $n>2\alpha+1$. The convergence of $G_\epsilon f$ to $G_0f$ in $L^p$ also follows by applying the same argument with $ h_{s\omega,\frac{\epsilon t}{2s^{2\alpha-1}}}-h_{s\omega}$ replacing $ h_{s\omega,\frac{\epsilon t}{2s^{2\alpha-1}}} $ and using dominated convergence theorem. 
		
		We now consider the case $2\alpha< n \leq 2\alpha+1$ and $t\gg 1$. After an integration by parts, we have
		$$ F_\epsilon( t,\omega,x)   =-\frac{2i}t \int_0^\infty  e^{-is t/2} \partial_s[s^{n-2\alpha}  h_{s\omega,\frac{\epsilon t}{2s^{{2\alpha-1}}}}* f(s\omega, \cdot)(x +t\omega ) ] \, ds. 
		$$
		We cannot integrate by parts again to gain another power of $t$. Therefore we utilize the identity (with $ K(s) =\partial_s[s^{n-2\alpha}  h_{s\omega,\frac{\epsilon t}{2s^{{2\alpha-1}}}}* f(s\omega, \cdot)(x +t\omega ) ]$)
		$$
		\int_0^\infty  e^{-is t/2} K(s) ds= \frac12 \int_0^{2\pi/t}  e^{-is t/2} K(s) ds + \frac12  \int_{0}^\infty  e^{-i (s+2\pi/t) t  /2  } [K(s+2\pi/t) -K(s )] ds.
		$$ 
		This implies that (with $\eta=\frac{n}2-\alpha\in(0,\frac12]$)
		\begin{multline*}
		\Big\|\int_0^\infty  e^{-is t/2} K(s) ds\Big\|_{L^p_x}
		\les \\ \int_0^{2\pi/t}  \|K(s)\|_{L^p_x} ds+
		\int_{0}^\infty  (\|K(s+2\pi/t)\|_{L^p_x}+\|K(s)\|_{L^p_x})^{1-\eta} \Big(\int_{s }^{s+2\pi/t} \big\|\partial_\rho K(\rho) \big\|_{L^p_x} d\rho\Big)^\eta ds\\
		\les t^{-1} \sup_{0<s<1} \|K(s)\|_{L^p_x}+t^{-\eta}\int_0^\infty \big[\sup_{s<\rho<s+1} \|K(\rho)\|_{L^p}\big]^{1-\eta} \big[\sup_{s<\rho<s+1} \|\partial_\rho K(\rho)\|_{L^p}\big]^{\eta} ds. 
		\end{multline*}
		Note that
		$$
		\|K(\rho)\|_{L^p_x} \les \la \rho\ra \rho^{n-2\alpha-1}  \big(\|f(\rho\omega,\cdot)\|_{L^p} +\|\partial_\rho f(\rho\omega,\cdot)\|_{L^p}\big)
		$$
		$$
		\big\|\partial_\rho K(\rho) \big\|_{L^p_x} \les \la \rho\ra^2  \rho^{n-2\alpha-2 }    \big(\|f(\rho\omega,\cdot)\|_{L^p} +\|\partial_\rho f(\rho\omega,\cdot)\|_{L^p}+\|\partial^2_\rho f(\rho\omega,\cdot)\|_{L^p}\big).
		$$
		Therefore, for $t\gg 1$
		$$
		\Big\|\int_0^\infty  e^{-is t/2} K(s) ds\Big\|_{L^p_x}\les t^{-\eta}
		\int_0^\infty \la s\ra s^{n-2\alpha-1} (s^{-1}\la s\ra)^{\eta} \sup_{s<\rho<s+1} \sum_{j=0}^2 \|\partial_\rho^j f(\rho \omega,\cdot )\|_{L^p} ds.
		$$
		Noting that, for $s<\rho<s+1$ 
		$$
		\sum_{j=0}^2 \|\partial_\rho^j f(\rho \omega,\cdot )\|_{L^p} \leq \sum_{j=0}^2 \|\partial_s^j f(s \omega,\cdot )\|_{L^p} +\int_s^{s+1} \sum_{j=0}^3 \|\partial_\rho^j f(\rho \omega,\cdot )\|_{L^p}\, d\rho,
		$$
		and applying Fubini's theorem yield the claim bounding $G_\epsilon$ in $L^p$. Convergence in $L^p$ follows similarly. 
	\end{proof}

	We now return to the operator $Z_J$ defined in \eqref{eq:ZJfin}.
	Taking limits as $\epsilon_j\to 0$ using the lemmas above and tracing the steps in pages 13 and 14  of \cite{EGWaveOp}, we bound $Z_j$ defined in \eqref{eq:ZJfin} as
	$$
	\|Z_Jf\|_{L^p}\les \|F  \|_{L^1((S^{n-1}\times\R^n\times\R)^{J})}\| f\|_{L^p}, 
	$$
	where 
	\begin{multline*}
	F =F (\omega_{ 1},y_{ 1},t_{ 1}, \dots,\omega_J,y_J,  t_J)\\
	:=
	\int_{(0,\infty)^J} \prod_{j= 1}^J \big[s_j^{n-2\alpha} e^{- i\frac{ s_jt_j}2  }  h_{s_j\omega_j} (y_j )\big]   K_J(s_{ 1}\omega_{ 1}, \dots,  s_J\omega_J)   \,ds_J \cdots ds_{ 1},
	\end{multline*}
	where  	$  K_J(k_1,k_2,\dots, k_J)= \prod_{j=1}^J \widehat V(k_j-k_{j-1})$.
	
	The following lemma finishes the proof of $L^p$ boundedness of $Z_J$.
	\begin{lemma} For $2\alpha<n<4\alpha-1$, we have  
		$$\|F \|_{L^1((S^{n-1}\times\R^n\times\R)^{J})}\leq C^J    \| \la \cdot \ra^{ \frac{4\alpha+1-n}{2}+} V(\cdot )\|_{L^2}^J, $$
		for $n=4\alpha-1\in\N$, we have 
		$$\|F \|_{L^1((S^{n-1}\times\R^n\times\R)^{J})}\leq C^J  \|  \la x\ra^{1+} V\|_{H^{0+}}^{J}, $$
		for $n>4\alpha-1$ and $\sigma>\frac{{n-2\alpha}}{n-1}$, we have 
		$$\|F \|_{L^1((S^{n-1}\times\R^n\times\R)^{J})}\leq C^J  \| \mF(\la x\ra^{2\sigma} V)\|_{L^{\frac{n-1}{{n-2\alpha}}-}}^J.$$
		Here $C$ depends on $n,\alpha$ and the actual values of $\pm$ signs. 
	\end{lemma}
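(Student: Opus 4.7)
Following the approach of \cite{EGWaveOp} adapted to the fractional setting developed in Lemmas~\ref{lem:gk}--\ref{lem:Geps}, the strategy is to integrate out the variables $(y_j,t_j,\omega_j)$ iteratively through $J$ applications of Lemma~\ref{lem:Geps}, reducing the bound to a weighted integral on $\R^{nJ}$ against $K_J=\prod_j \widehat V(k_j-k_{j-1})$. Because the chain $K_J$ is telescoping, this integral factorizes via the change of variables $u_j=k_j-k_{j-1}$ into the $J$-th power of a single per-link H\"older/Cauchy--Schwarz estimate, which yields the advertised $C^J\|V\|_*^J$ structure.

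\textbf{Step 1 (reduce to an iterated $k$-integral).} Viewing $F$ as a product-form oscillatory integrand in the variables $s_j$, I would apply Lemma~\ref{lem:Geps} in each $s_j$ in turn, treating the remaining product as the ``target function'' $f(k_j,\cdot)$. Each such application converts $\int d\omega_j\, dy_j\, dt_j\,|F|$ into a weighted $\R^n$ integral against a sum of up to $M_n\in\{2,3\}$ radial derivatives. Radial derivatives $\partial_{s_j}^\ell$ landing on $h_{s_j\omega_j}(y_j)$ are handled by Lemma~\ref{lem:gk}, which absorbs them into $s_j^{-\ell}$; those landing on $\widehat V(k_j-k_{j-1})$ become Fourier transforms of $x^\beta V$, which is exactly why the potential appears with the weight $\la x\ra^\sigma$ in the final bound. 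Summing over the distribution of derivatives and passing to Cartesian variables $k_j=s_j\omega_j$ via the polar Jacobian, this leads to a bound of the schematic form
\begin{equation*}
\|F\|_{L^1}\ \lesssim\ \int_{\R^{nJ}}\prod_{j=1}^J W_n(k_j)\,\bigl|\widehat{V_\sigma}(k_j-k_{j-1})\bigr|\,dk_1\cdots dk_J,
\end{equation*}
where $V_\sigma=\la x\ra^\sigma V$ (possibly multiplied by a polynomial $x^\beta$ of bounded order) and where $W_n(k)\approx |k|^{n-2\alpha-1}$ near zero with $\la k\ra^{-N}$-decay at infinity for any fixed $N$.

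\textbf{Step 2 (per-link estimate in three regimes).} After the change of variables $u_j=k_j-k_{j-1}$ the above integral factorizes into the $J$-th power of a one-link quantity pairing $W_n$ with a translate of $\widehat{V_\sigma}$. The appropriate inequality depends on the local integrability of $W_n$. When $2\alpha<n<4\alpha-1$, $W_n\in L^2(\R^n)$ after absorbing a $\la k\ra^{-\delta}$ into $\la x\ra^\delta V$, so Cauchy--Schwarz together with Plancherel gives $\|W_n\|_{L^2}\|V_\sigma\|_{L^2}$ with $\sigma=\tfrac{4\alpha+1-n}{2}+\delta$. When $n>4\alpha-1$, the weight $W_n$ fails to sit in $L^2_{loc}$, so one dualizes and H\"older-pairs $W_n\in L^{(n-1-\delta)/(2\alpha-1+\delta)'}$ against $\widehat{V_\sigma}\in L^{(n-1-\delta)/(n-2\alpha-\delta)}$, which is precisely the stated $\mathcal F L^r$-norm. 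The case $n=4\alpha-1$ is the $L^2$ endpoint, where a logarithmic loss is absorbed by the extra $H^\delta$ regularity of $\la x\ra^{1+\delta}V$.

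\textbf{Main obstacle.} The subtle point is to verify that the iteration really does factorize cleanly across links, so that the per-link cost is exactly one factor of the stated $V$-norm and no spurious $J$-dependent blow-up of the weights occurs along the chain. This requires bookkeeping how $\partial_{s_j}$ distributes among neighboring $\widehat V$-factors via the product rule and how the weights $\la k_j\ra^{a_n}/|k_j|^{\gamma_n}$ propagate through the successive substitutions $u_{j+1}=k_{j+1}-k_j$. The most fragile case is the borderline $n=4\alpha-1$, where the $\delta$-loss in Sobolev regularity originates and appears to be intrinsic to this method; refining it would likely require a genuinely different argument.
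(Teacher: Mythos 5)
Your proposal does not follow the paper's route, and as written it has gaps at exactly the points where the argument is delicate. First, the per-link weight in your Step 1 is neither derived correctly nor does it have the properties you assign to it. If you integrate out each $t_j$ by brute force (integrating by parts once or twice in $s_j$, as in the proof of Lemma~\ref{lem:Geps}), the derivatives land on the $\widehat V$ factors at every link, and the surviving Cartesian weight is of size $\la k\ra^{2}|k|^{-1-2\alpha}$; nothing in $F$ supplies decay in $k_j$ at infinity (there is no energy cutoff in the Born series, and Lemma~\ref{lem:gk} only gives $\|h_{s\omega}\|_{L^1}\les 1$ uniformly in $s$), so the claim that $W_n$ decays like $\la k\ra^{-N}$ for every $N$ has no source. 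Moreover, the device you invoke to restore integrability, ``absorbing a $\la k\ra^{-\delta}$ into $\la x\ra^{\delta}V$,'' is not a legitimate operation: spatial weights on $V$ correspond to derivatives of $\widehat V$, not to decay of $\widehat V$ in $k$; decay in $k$ would require smoothness of $V$, which is precisely what is not assumed when $n\le 4\alpha-1$. With the correct brute-force weight the per-link Cauchy--Schwarz fails both near $k=0$ in the regime $2\alpha<n<4\alpha-1$ and at infinity, and since every link has absorbed one or two derivatives of $\widehat V$ (i.e.\ $\la x\ra^{2}$-type weights on $V$), this route cannot produce the stated exponents $\frac{4\alpha+1-n}{2}+$, $1+$, $2\sigma$. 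Finally, the asserted factorization after the substitution $u_j=k_j-k_{j-1}$ is false, because the weights depend on $k_j=u_1+\cdots+u_j$; you flag this as the ``main obstacle'' but do not resolve it.

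The paper's proof replaces each of these steps by a different mechanism, and these are the missing ideas. It splits $F$ into $2^J$ pieces according to $|y_j|\les1$ or $|y_j|\gg1$ and uses the two-sided bound $|h_{s\omega}(y)|\les s^n\min((s|y|)^{-n-\delta},(s|y|)^{-n+\delta})$ to trade $\pm\delta$ powers of $s_j$ for integrable powers of $|y_j|$. It treats the $t_j$ integrals not by absolute convergence but by Hausdorff--Young, estimating $F$ first in $L^r_t$ with $r\ge2$, and then H\"older in $\omega_j$ yields the mild weights $|k_j|^{(n-2\alpha\pm\delta)q-n+1}$ of \eqref{eq:FL1Lr}. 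The chain is then peeled off one link at a time by Hardy's inequality applied in $k_J$ uniformly in $k_{J-1}$ (this, not a change of variables, is what gives the $J$-th power of a single norm), with the choices $q=2$, $q=2-$, or $q=\frac{n-1-\delta}{n-2\alpha}$ producing the three regimes. Finally, the passage from $L^r_t$ to $L^1_t$ is done by proving the companion estimates \eqref{eq:alphaFL1Lr} with weights $t_1^{\alpha_1}\cdots t_J^{\alpha_J}$, $\alpha_j\in\{0,1\}$ (each derivative can hit two adjacent $\widehat V$'s, whence the $\la x\ra^{2}$), and then interpolating to fractional weights $\la t_j\ra^{1/2+}$, resp.\ $\la t_j\ra^{\sigma}$, before applying Cauchy--Schwarz or H\"older in $t$; this interpolation is exactly where the exponents $\frac{4\alpha+1-n}{2}+$, $1+$ and $2\sigma$ come from. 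Since your Step 2 exponents are read off from the statement rather than derived, and none of the above ingredients appear in your outline, the proposal as it stands does not constitute a proof.
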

	\begin{proof}
		We write $F$ as a sum of $2^{J}$ operators of the form (for each subset $\mJ$ of $\{1,2,...,J\}$)
		$$
		F_{\mJ}  (\omega_{ 1},y_{ 1},t_{ 1}, \dots,\omega_J,y_J,  t_J)= F (\omega_{ 1},y_{ 1},t_{ 1}, \dots,\omega_J,y_J,  t_J) \big[\prod_{j\in\mJ}
		\chi(y_j) \big]  \big[\prod_{j\not \in\mJ}
		\widetilde\chi(y_j) \big].
		$$
		It suffices to prove that each $F_\mJ$ satisfies the claim.
		
		Fix $r\geq 2$ and $\frac1q+\frac1r=1$. By Hausdorff-Young inequality,  we have (with $L^p(\Omega)L^q(D)=L^P(\Omega,L^q(D))$)
		\begin{multline*}\|F_{\mJ}\|_{L^1 (S^{n-1}\times\R^n)^{J} L^r(\R^{J})}\les  
		\int_{(S^{n-1}\times\R^n)^{J}}  \Big[\int_{(0,\infty)^{J} } \big[\prod_{j=1}^J
		s_j^{n-2\alpha}   h_{s_j\omega_j} (y_j )   \big]^q    \times\\ |  K_J(s_{ 1}\omega_{ 1}, \dots,  s_J\omega_J)|^q ds_1\dots ds_J\Big]^{1/q}\big[\prod_{j\in\mJ}
		\chi(y_j) \big]  \big[\prod_{j\not \in\mJ}
		\widetilde\chi(y_j) \big]d\vec y d\vec \omega.
		\end{multline*}
		Note that, by \eqref{eqn:homega bound} in the proof of Lemma~\ref{lem:gk} above (for $0<\delta\ll 1$)
		$$|h_{s\omega}(y)|\les s^n\min((s|y|)^{-n-\delta}, (s|y|)^{-n+\delta})  \les \chi(y)  |y|^{-n+\delta} s^\delta  + \widetilde \chi(y)  |y|^{-n-\delta} s^{-\delta}.
		$$
		Since $\chi(y)  |y|^{-n+\delta} \in L^1$ and $\widetilde \chi(y)  |y|^{-n-\delta}\in L^1$ for any $\delta>0$, we can bound the norm above by  
		$$\int_{(S^{n-1})^{J}}  \Big[\int_{(0,\infty)^{J}}\big[\prod_{j\in\mJ}^J s_j^{(n-2\alpha+\delta)q}    \big] 
		\big[\prod_{j\not \in\mJ}^J s_j^{(n-2\alpha-\delta)q}    \big]  |  K_J(s_{ 1}\omega_{ 1}, \dots,  s_J\omega_J)|^q d\vec s \Big]^{1/q} d\vec \omega.
		$$
		By Holder in $\omega_j$ integrals we conclude that
		\begin{multline} \label{eq:FL1Lr}
		\|F_\mJ \|_{L^1 (S^{n-1}\times\R^n)^{J} L^r(\R^{J})}\les\Big[\int_{\R^{nJ}}    \big[\prod_{j\in\mJ}^J|k_j|^{(n-2\alpha+\delta)q-n+1}     \big] 
		\big[\prod_{j\not \in\mJ}^J |k_j|^{(n-2\alpha-\delta)q-n+1}    \big]  \times \\  |  K_J(k_1, \dots,  k_J )|^q dk_1\dots dk_J\Big]^{1/q}.  
		\end{multline} 
		Similarly, (here $\alpha_j=0$  or $1$ independently) 
		\begin{multline*}
		\|t_1^{\alpha_1}\dots t_J^{\alpha_J}F_\mJ\|_{L^1 (S^{n-1}\times\R^n)^{J} L^r(\R^{J})}\les
		\\\int_{(S^{n-1}\times\R^n)^{J}}  \Big[\int_{(0,\infty)^{J} }\Big|\partial_{s_1}^{\alpha_1}\dots \partial_{s_J}^{\alpha_J}\prod_{j= 1}^J \big(s_j^{{n-2\alpha}}    h_{s_j\omega_j} (y_j ) \big)   \times   K_J(s_{ 1}\omega_{ 1}, \dots,  s_J\omega_J)\Big|^q ds_1\dots ds_J\Big]^{1/q}\\ \times  \big[\prod_{j\in\mJ}
		\chi(y_j) \big]  \big[\prod_{j\not \in\mJ}
		\widetilde\chi(y_j) \big]d\vec y d\vec \omega.
		\end{multline*}
		Since $\partial_s h_{s\omega}$ satisfies the same bounds as $\frac1s h_{s\omega}$, proceeding as above, we obtain the  estimate  
		\begin{multline*}\|t_1^{\alpha_1}\dots t_J^{\alpha_J}F_\mJ \|_{L^1 (S^{n-1}\times\R^n)^{J} L^r(\R^{J})}\les 
		\Big[\int_{\R^{nJ}}  \big[\prod_{j\in\mJ} |k_j|^{({n-2\alpha}+\delta)q-n+1}     \big] 
		\big[\prod_{j\not \in\mJ}  |k_j|^{({n-2\alpha}-\delta)q-n+1}    \big]  \times\\ \Big|  \prod_{j=1}^J(\nabla_{k_j}^{\alpha_j}+|k_j|^{-\alpha_j })  K_J(k_1, \dots,  k_J )\Big|^q  dk_1\dots dk_J\Big]^{1/q}.  
		\end{multline*}
		Using Hardy's inequality, this implies that  
		\begin{multline}\label{eq:alphaFL1Lr}\|t_1^{\alpha_1}\dots t_J^{\alpha_J}F_\mJ  \|_{L^1 (S^{n-1}\times\R^n)^{J} L^r(\R^{J})}\les 
		\Big[\int_{\R^{nJ}}  \big[\prod_{j\in\mJ} |k_j|^{({n-2\alpha}+\delta)q-n+1}     \big] 
		\big[\prod_{j\not \in\mJ}  |k_j|^{({n-2\alpha}-\delta)q-n+1}    \big]  \times\\ \Big|  \prod_{j=1}^J \nabla_{k_j}^{\alpha_j}  K_J(k_1, \dots,  k_J )\Big|^q  dk_1\dots dk_J\Big]^{1/q}.  
		\end{multline}
		Let $2\alpha<n<4\alpha-1$. Applying \eqref{eq:FL1Lr} with $0< \delta<\frac{4\alpha-1-n}2$ and $q=r=2$, we obtain 
		$$
		\|F_\mJ\|^2_{L^1 (S^{n-1}\times\R^n)^{J} L^2(\R^{J})}\les  \int_{\R^{nJ}}   \big[\prod_{j\in\mJ} |k_j|^{ {n-4\alpha}+1+2\delta}     \big] 
		\big[\prod_{j\not \in\mJ}  |k_j|^{ {n-4\alpha}+1-2\delta }    \big]    |  K_J(k_1, \dots,  k_J )|^2 d\vec k.  
		$$
		Note that by Hardy's inequality the integral in $k_J$ is bounded by 
		$$
		\int | |D_{k_J}|^{\frac{4\alpha-1-n}{2}\pm\delta} \widehat V(k_{J-1}-k_J)|^2 dk_J\les   \| \la \cdot \ra^{\frac{4\alpha-1-n}{2}\pm\delta} V(\cdot )\|_{L^2}^2\les \| \la \cdot \ra^{\frac{4\alpha-1-n}{2}+\delta} V(\cdot )\|_{L^2}^2. 
		$$
		Repeated application of this inequality yields
		$$
		\|F_\mJ\|_{L^1 (S^{n-1}\times\R^n)^{J} L^2(\R^{J})}\les    \| \la \cdot \ra^{\frac{4\alpha-1-n}{2}+\delta} V(\cdot )\|_{L^2}^J.
		$$
		Similarly, applying \eqref{eq:alphaFL1Lr} with $r=q=2$  and  $0<\delta\ll 1$ yield
		$$
		\|t_1^{\alpha_1}\dots t_J^{\alpha_J}F_\mJ  \|_{L^1 (S^{n-1}\times\R^n)^{J} L^2(\R^{J})} \les     \| \la \cdot \ra^{2+\frac{4\alpha-1-n}{2}+\delta} V(\cdot )\|_{L^2}^J.
		$$
		We lose two powers here as derivatives in $k_{j-1} $ and $k_j$  may both hit $\widehat V(k_{j-1}-k_j)$.
		Writing 
		$$\prod_{j=1}^J (1+|t_j|) = \sum_{\alpha_1,\dots, \alpha_J\in\{0,1\}} |t_1^{\alpha_1}\dots t_J^{\alpha_J}|,$$ these inequalities imply with that 
		$$\big\|\prod_{j=1}^J \la t_j\ra F_\mJ  \big\|_{L^1 (S^{n-1}\times\R^n)^{J} L^2(\R^{J})}\les  \| \la \cdot \ra^{2+\frac{4\alpha-1-n}{2}+\delta} V(\cdot )\|_{L^2}^J,
		$$
		which by multilinear complex interpolation leads to 
		$$\big\|\prod_{j=1}^J \la t_j\ra^{\frac12+} F_\mJ \big\|_{L^1 (S^{n-1}\times\R^n)^{J} L^2(\R^{J})}\les  \| \la \cdot \ra^{1+\frac{4\alpha-1-n}{2}+\delta+} V(\cdot )\|_{L^2}^J.
		$$
		This proves the claim for $n<4\alpha-1$ by Cauchy-Schwarz in the $t$ integrals.
		
		For $n=4\alpha-1\in\N$,  with $q=2-$, $r=2+$,  \eqref{eq:FL1Lr} implies
		$$
		\|F_\mJ \|^{2-}_{L^1 (S^{n-1}\times\R^n)^{J} L^{2+}(\R^{J})}\les  \int_{\R^{nJ}}     
		\big[\prod_{j\not \in\mJ}^J |k_j|^{0-}   \big]   |  K_J(k_1, \dots,  k_J )|^{2-} dk_1\dots dk_J.
		$$
		By Hardy's inequality, the integral in $k_J$ is
		\begin{multline*}  
		 \les \int \big||D_{k_J}|^{0+} \mF(V(\cdot)e^{ik_{J-1}\cdot})(k_J)\big|^{2-}  dk_J
		 \les \int \big|  \mF(\la \cdot\ra^{0+} V(\cdot)e^{ik_{J-1}\cdot})(k_J)\big|^{2-}dk_J \\ \les \int \big|  \mF(\la \cdot\ra^{0+} V(\cdot))(k_J)\big|^{2-}dk_J
\les \Big[ \int  \la k_J\ra^{0+}\big|  \mF(\la \cdot\ra^{0+}  V(\cdot))(k_J)\big|^{2}dk_J\Big]^{\frac{2-}2}		\les   \|  \la \cdot \ra^{0+} V(\cdot ) \|_{H^{0+}}^{2-}.  
		\end{multline*}   
	Repeating the same argument in the remaining variables yield 
	$$
		\|F_\mJ \|_{L^1 (S^{n-1}\times\R^n)^{J} L^{2+}(\R^{J})}\les \|  \la \cdot \ra^{0+} V(\cdot ) \|_{H^{0+}}^J.
		$$	
		Similar modifications in the other inequalities imply the claim in this case. 
		
		When $n>4\alpha-1$, we apply the inequalities with $0<\delta \ll 1$ and $q=\frac{n-1-\delta}{{n-2\alpha}} $, $r=\frac{n-1-\delta}{{2\alpha-1}-\delta}$ to obtain
		$$\|F_\mJ \|_{L^1 (S^{n-1}\times\R^n)^{J} L^r(\R^{J})}\les \Big[\int_{\R^{nJ}}   \prod_{j\not\in \mJ }   |k_j|^{0- }    | K_J(k_1, \dots,  k_J )|^q dk_1\dots dk_J\Big]^{1/q}
		\les \|\mF(\la \cdot \ra^{0+} V(\cdot))\|_{L^q}^J. $$
		Similarly, we obtain 
		$$
		\|t_1^{\alpha_1}\dots t_J^{\alpha_J}F_\mJ\|_{L^1 (S^{n-1}\times\R^n)^{J} L^r(\R^{J})}\les \|\mF(\la \cdot \ra^{2+} V(\cdot))\|_{L^q}^J, 
		$$
		which implies  that 
		$$
		\big\|\prod_{j=1}^J \la t_j\ra F_\mJ \big\|_{L^1 (S^{n-1}\times\R^n)^{J} L^{\frac{n-1-\delta}{{2\alpha-1}-\delta}}(\R^{J})}   \les 
		\| \mF(\la x\ra^{2+} V)\|_{L^{\frac{n-1-\delta}{{n-2\alpha}}}}^J.  
		$$
		Interpolating the two bounds  we obtain (with $\sigma>\frac{{n-2\alpha}}{n-1-\delta}$)
		$$
		\big\|\prod_{j=1}^J \la t_j\ra^\sigma F_\mJ\big \|_{L^1 (S^{n-1}\times\R^n)^{J} L^{\frac{n-1-\delta}{{2\alpha-1}-\delta}}(\R^{J})}   \les 
		\| \mF(\la x\ra^{2\sigma} V)\|_{L^{\frac{n-1-\delta}{{n-2\alpha}}}}^J, 
		$$
		which implies the claim by H\"older's inequality in the $t$ integrals.  
		
	\end{proof}
The statement in Theorem~\ref{thm:Born} follows by keeping track of the relationship between $q,r,\sigma$ and $\delta$ in the proof above.

\section{Low Energies}\label{sec:low}

In this section we  prove the low energy result, that for sufficiently large $\ell$ the tail of the Born series \eqref{eqn:born identity} extends to a bounded operator on $L^p(\R^n)$.    It is convenient to use a change of variables to respresent $W_{low,\ell}$ as  
$$
	\frac{\alpha}{\pi i}\int_{0}^\infty \chi(\lambda) \lambda^{2\alpha -1} (\mR_0^+(\lambda^{2\alpha }) V)^{\ell } \mR_V^+(\lambda^{2\alpha })  (V\mR_0^+(\lambda^{2\alpha }) )^\ell V   [\mR_0^+(\lambda^{2\alpha })-\mR_0^-(\lambda^{2\alpha })]  \, d\lambda
$$
We begin by using the symmetric resolvent identity on the perturbed resolvent $\mR_V^+(\lambda^{2\alpha })$. With $v=|V|^{\f12}$, $U(x)=1$ if $V(x)\geq 0$ and $U(x)=-1$ if $V(x)<0$, we define $M^+(\lambda)=U+v\mR_0^+(\lambda^{2\alpha })v$. Recall that $M^+$ is invertible on $L^2$   in a sufficiently small neighborhood of $\lambda=0$ provided that zero is a regular point of the spectrum. 
Using the symmetric resolvent identity, one has
$$
\mR_V^+(\lambda^{2\alpha })V=\mR_0^+(\lambda^{2\alpha })vM^+(\lambda)^{-1}v.
$$
We select the cut-off $\chi$ to be supported in this neighborhood.  Therefore, we have
$$
	W_{low,\ell}=	\frac{\alpha}{\pi i}\int_{0}^\infty \chi(\lambda) \lambda^{2\alpha -1} \mR_0^+(\lambda^{2\alpha }) v\Gamma_\ell(\lambda) v   [\mR_0^+(\lambda^{2\alpha })-\mR_0^-(\lambda^{2\alpha })]  \, d\lambda,
$$
where $\Gamma_0(\lambda):= M^+(\lambda)^{-1} $ and for $\ell\geq 1$
\be\label{Gammalambda}
	\Gamma_\ell(\lambda):= Uv\mR_0^+(\lambda^{2\alpha }) \big(V\mR_0^+(\lambda^{2\alpha })\big)^{\ell-1} vM^+(\lambda)^{-1}v \big( \mR_0^+(\lambda^{2\alpha })V\big)^{\ell-1} \mR_0^+(\lambda^{2\alpha })vU.
\ee
To state the main result of this section, we define an operator $T:L^2\to L^2$ with integral kernel $T(x,y)$ to be absolutely bounded if the operator with kernel $|T(x,y)|$ is bounded on $L^2$. 
\begin{prop}\label{lem:low tail low d} Fix  $n>2\alpha > 2$  and $0<\eta $. Let $\Gamma$ be a $\lambda $ dependent absolutely bounded operator. Let 
$$
\widetilde \Gamma (x,y):= \sup_{0<\lambda <\lambda_0}\Big[|\Gamma(\lambda)(x,y)|+ \sup_{1\leq k\leq  \lceil \tfrac{n}2\rceil +1  } \big|\lambda^{k-\eta} \partial_\lambda^k \Gamma(\lambda)(x,y)\big| \Big].
$$
For $2\alpha <n<4\alpha $ assume that $\widetilde \Gamma $ is bounded on $L^2$, and for $n\geq 4\alpha $  assume that $\widetilde\Gamma $ satisfies   \be\label{eq:tildegamma}
\widetilde \Gamma (x,y)  \les \la x\ra^{-\frac{n}2-}\la y\ra^{-\frac{n}2-}.
\ee Then the operator with kernel 
	\be\label{Kdef}
	K(x,y)=\int_0^\infty \chi(\lambda) \lambda^{2\alpha -1} \big[\mR_0^+(\lambda^{2\alpha }) v \Gamma(\lambda)  v [\mR_0^+(\lambda^{2\alpha }) -\mR_0^-(\lambda^{2\alpha })]\big](x,y)   d\lambda 
	\ee
	is bounded on $L^p$ for $1\leq p\leq \infty$ provided that $|V(x)|\les \langle x\rangle^{-\beta}$ for some $\beta>n$.
\end{prop}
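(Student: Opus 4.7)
The plan is to establish the endpoint bounds $K:L^1\to L^1$ and $K:L^\infty\to L^\infty$ by Schur-type pointwise integral estimates, which by Riesz--Thorin interpolation yields $L^p$-boundedness for all $1\leq p\leq \infty$. Since complex conjugation swaps $\mR_0^+$ with $\mR_0^-$, both endpoints reduce to a uniform estimate of the form $\sup_y \int |K(x,y)|\,dx \les 1$, the companion $\sup_x\int|K(x,y)|\,dy\les 1$ being obtained identically. The first step is to insert into $K$ the low-energy representations of $\mR_0^\pm(\lambda^{2\alpha})$ and of the spectral density $\mR_0^+-\mR_0^-$ supplied by Proposition~\ref{prop:F}. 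Qualitatively, the resolvent kernel splits into a non-oscillatory Riesz-like piece of size $\les |x-z|^{2\alpha-n}$ and an oscillatory piece of size $\les \lambda^{(n-1)/2-\alpha}|x-z|^{(1-n)/2}e^{\pm i\lambda|x-z|}$, while $\mR_0^+-\mR_0^-$ is $\lambda^{n-2\alpha}$ times a smooth oscillatory function of $\lambda|z-y|$. Expanding turns $K(x,y)$ into a double spatial integral in $(z_1,z_2)$ of $v(z_1)\Gamma(\lambda)(z_1,z_2)v(z_2)$ paired against an amplitude that factors as a product of these pieces.

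The central mechanism is repeated integration by parts in $\lambda$, with the support $(0,\lambda_0]$ split into the ``near'' region where $\lambda(|x-z_1|+|z_2-y|)\les 1$ and its complement. On the complement, integrating by parts $\lceil n/2\rceil+1$ times against the oscillating phases yields a factor of $(|x-z_1|+|z_2-y|)^{-\lceil n/2\rceil-1}$; derivatives falling on $\Gamma(\lambda)$ are absorbed by the hypothesis $|\partial_\lambda^k\Gamma(\lambda)(z_1,z_2)|\les \lambda^{\eta-k}\widetilde\Gamma(z_1,z_2)$, while derivatives falling on $\chi(\lambda)$ or on the non-oscillatory amplitude factors remain harmless since $\chi$ localizes $\lambda$ to a bounded neighborhood of zero. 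The resulting $\lambda$-integral converges at the origin thanks to the balance between the gain $\lambda^\eta$, the loss $\lambda^{n-2\alpha}$ from the spectral density, and the weight $\lambda^{2\alpha-1}$ from the measure $d\lambda$; this is precisely why $\lceil n/2\rceil+1$ derivatives of $\widetilde\Gamma$ suffice. On the near region, the oscillation is trivially $O(1)$ and the $\lambda$-integral is controlled by direct pointwise estimates using the Riesz-like bounds.

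After these manipulations, the $dx$-integral of $|K(x,y)|$ is dominated by an expression of the form $\int\!\!\int G(z_1,y)\,v(z_1)\widetilde\Gamma(z_1,z_2)v(z_2)\,H(z_2,y)\,dz_1\,dz_2$, with explicit pointwise bounds on $G,H$ combining the Riesz-type short-distance singularity and the decay produced by integration by parts at long distances. In the regime $2\alpha<n<4\alpha$ I would close the estimate by Cauchy--Schwarz and the $L^2$-boundedness of $\widetilde\Gamma$, reducing the argument to the uniform-in-$y$ bounds $\|v\,G(\cdot,y)\|_{L^2}\les 1$ and $\|v\,H(\cdot,y)\|_{L^2}\les 1$, which hold because $v(z)\les\la z\ra^{-\beta/2}$ with $\beta>n$. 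In the regime $n\geq 4\alpha$, the pointwise decay $\widetilde\Gamma(z_1,z_2)\les\la z_1\ra^{-n/2-}\la z_2\ra^{-n/2-}$ decouples the $z_1$ and $z_2$ integrals, and the same weight hypothesis suffices. The main obstacle is the simultaneous management of the Riesz-singular short-distance and oscillatory long-distance parts of $\mR_0^\pm(\lambda^{2\alpha})$ under integration by parts: too many integrations by parts create a non-integrable singularity at $\lambda=0$, whereas too few fail to produce enough spatial decay to overcome the weights in $v$ and $\widetilde\Gamma$; the region decomposition and the exact count $\lceil n/2\rceil+1$ of derivatives of $\Gamma$ are calibrated precisely to resolve this tension.
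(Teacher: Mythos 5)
Your overall framework (reduce to admissibility of the kernel, expand via Proposition~\ref{prop:F}, and integrate by parts in $\lambda$) is the same as the paper's, but two essential difficulties are passed over, and as written the argument does not close. First, the claim that integrating by parts $\lceil n/2\rceil+1$ times ``against the oscillating phases'' produces a factor $(|x-z_1|+|z_2-y|)^{-\lceil n/2\rceil-1}$ is false on a whole region: since $\mR_0^+-\mR_0^-$ contributes both $e^{i\lambda r_2}F_+$ and $e^{-i\lambda r_2}F_-$ (with $r_1=|x-z_1|$, $r_2=|z_2-y|$), one of the two terms carries the phase $e^{i\lambda(r_1-r_2)}$, which is non-oscillatory on the set $r_1\approx r_2\gg1$ even though $r_1+r_2$ is huge. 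There integration by parts gains only powers of $\la\lambda(r_1-r_2)\ra$, not of $r_1+r_2$, and admissibility must be recovered differently; the paper does this in its $K_2$ piece by integrating by parts only twice in the degenerate phase and then passing to polar coordinates in $x$ with the substitution $\eta=\lambda(r_1-r_2)$, using $n-2\alpha-1>-1$. Your near/far dichotomy in $\lambda(r_1+r_2)$ does not see this degeneracy at all.

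Second, even where the phase is non-degenerate, in the region $r_2\gg\la r_1\ra$ the bookkeeping you describe is borderline-insufficient: when no $\lambda$-derivative falls on $\Gamma(\lambda)$ there is no $\lambda^{\eta}$ gain (the hypothesis only gives $\lambda^{\eta-k}$ bounds for $k\geq1$), and with exactly $N=\lceil n/2\rceil+1$ integrations by parts (more are not allowed, since higher derivatives of $\Gamma$ are not controlled) the resulting bound is of size $r_2^{-N}\,r_2^{-\frac{n-1}{2}}\int_{1/r_2}^{1}\lambda^{\frac{n-1}{2}-N}d\lambda\approx r_2^{-n}$, which is not admissible ($\int_{|z_2-y|\gg1}|z_2-y|^{-n}dy$ diverges). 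This is precisely why the paper, in its $K_3$ piece, subtracts the zero-energy parts, writing $\mR_0^+(\lambda^{2\alpha})=G_0+E(\lambda)$ and $\Gamma(\lambda)=\Gamma(0)+\Gamma_1(\lambda)$: the pure zero-energy term $G_0\Gamma(0)$ is handled by recognizing $\int_0^\infty\chi(\lambda)\lambda^{2\alpha-1}[\mR_0^+-\mR_0^-](\lambda^{2\alpha})(r_2)\,d\lambda$ as a constant multiple of the Schwartz kernel of $\chi((-\Delta)^{1/2\alpha})$, hence $O(\la r_2\ra^{-M})$, while the remainders $E(\lambda)$ and $\Gamma_1(\lambda)$ carry the extra $\lambda^{\eta}$-type smallness (Corollary~\ref{cor:E} and the mean value theorem) needed to upgrade $r_2^{-n}$ to $r_2^{-n-\eta}$ and invoke Lemma~\ref{lem:admissible}. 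Without some such cancellation/subtraction your scheme loses admissibility in this region. (A smaller point: the two Schur integrals $\sup_y\int|K|\,dx$ and $\sup_x\int|K|\,dy$ are not interchangeable by conjugation here, since the kernel is not symmetric; for $n\geq4\alpha$ the $dy$-integral genuinely needs the pointwise decay \eqref{eq:tildegamma}, as in the paper's treatment of $K_1$.)
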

In contrast to the case when $\alpha\in \mathbb N$ considered in \cite{EGWaveOp,EGWaveOp2} is that $n-2\alpha$ can be close to zero.  In the integer order case, see Proposition~2.1 in \cite{EGWaveOp2}, the derivatives in $\widetilde{\Gamma}$ are multiplied by $\lambda^{k-1}$ as one can gain a full power of $\lambda$ smallness that is not accessible here.  Instead the parameter $\eta>0$ is used to account for this lack of further smallness when $n-2\alpha<1$, though we leave some flexibility here.
Note that boundedness of the contribution of the tail follows from this proposition and the following 
\begin{lemma}\label{lem:Gamma}
 Fix  $n>2\alpha \geq 2$. Assume that $|V(x)|\les \la x\ra^{-\beta}$, where  $\beta>n+4$ when $n$ is odd and $\beta>n+3$ when $n$ is even.  Also assume that zero is a regular point of the spectrum of $H$.
 Then, for some $\eta>0$,  the operator $\Gamma_\ell(\lambda) $ defined in \eqref{Gammalambda} satisfies the hypothesis of Proposition~\ref{lem:low tail low d} for all $\ell$ when $2\alpha <n<4\alpha $ and for all sufficiently large $\ell$ when $n\geq 4\alpha $.
\end{lemma}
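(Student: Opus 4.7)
The plan is to reduce the lemma to a combination of pointwise kernel bounds for $\mR_0^\pm(\lambda^{2\alpha})$ together with $L^2$-invertibility of $M^+(\lambda)$ for small $\lambda$, and then propagate these through the product structure of $\Gamma_\ell$ via the Leibniz rule. First I would collect from \cite{EGG frac disp} (via Proposition~\ref{prop:F}) the asymptotic expansion of the free resolvent kernel at low energy, together with bounds of the schematic form $|\partial_\lambda^k \mR_0^\pm(\lambda^{2\alpha})(x,y)| \les \lambda^{\eta-k} \Phi_k(x,y)$ for $0<\lambda<\lambda_0$ and $0\leq k\leq \lceil n/2\rceil+1$, where $\Phi_k$ is an absolutely bounded kernel with at most mild polynomial growth in $|x-y|$ (the $k=0$ case reduces to the standard $|G_0(x,y)|\les |x-y|^{2\alpha-n}$ behavior).

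The regularity assumption on zero gives invertibility of $M^+(0) = U+v\mR_0^+(0)v$ on $L^2$; a perturbation argument using the bounds above, together with $|V(x)|\les \la x\ra^{-\beta}$, extends invertibility to $\lambda\in(0,\lambda_0)$ and yields
\begin{equation*}
\big\|\lambda^{k-\eta}\partial_\lambda^k M^+(\lambda)^{-1}\big\|_{L^2\to L^2}\les 1,\qquad 0\leq k\leq \lceil n/2\rceil+1.
\end{equation*}
This is the step where the decay requirement $\beta>n_\star$ enters quantitatively: the integer $n_\star$ is chosen so that the worst $\lambda$-derivative of $v\mR_0^+(\lambda^{2\alpha})v$, after conjugation by $v$, is controlled in $L^2$. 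I would then apply the Leibniz rule to $\partial_\lambda^k\Gamma_\ell(\lambda)$, distributing the $k$ derivatives among the $2\ell$ resolvent factors and the single $M^+(\lambda)^{-1}$ factor. Each summand becomes a composition of absolutely bounded operators glued by the pointwise-bounded factors $V$ and $v$, and since the total weight $\lambda^{k-\eta}$ decomposes as $\prod_j\lambda^{k_j-\eta_j}$ with $\sum k_j=k$, the singular $\lambda$-weights balance against the pieces produced by each individual derivative.

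The two regimes then split as follows. When $2\alpha<n<4\alpha$, the outer factors $v\mR_0^+(\lambda^{2\alpha})\cdots \mR_0^+(\lambda^{2\alpha})v$ (and all $\lambda$-derivatives, after multiplying by the corresponding $\lambda^{k-\eta}$) are absolutely bounded on $L^2$ uniformly in $\lambda$, so $\widetilde\Gamma_\ell$ inherits $L^2$-boundedness for every $\ell\geq 1$. When $n\geq 4\alpha$, the free resolvent kernel lacks enough decay at infinity, so the pointwise bound $\widetilde\Gamma(x,y)\les \la x\ra^{-n/2-}\la y\ra^{-n/2-}$ must be produced by hand. I would iterate: each factor $V\mR_0^+(\lambda^{2\alpha})$ applied to a weighted function $\la\cdot\ra^{-\sigma}$ increases the decay rate by a quantitative amount, using $|V|\les \la x\ra^{-\beta}$ with $\beta>n_\star$ and the resolvent kernel bounds; for $\ell$ sufficiently large, both variables acquire the desired weight, while the outer $vU$ factors absorb the remaining $\la x\ra^{-n/2-}$ since $\beta/2>(n+3)/2$.

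The main technical obstacle is the bookkeeping of the highest-order $\lambda$-derivatives, where $\partial_\lambda^k\mR_0^\pm(\lambda^{2\alpha})(x,y)$ becomes simultaneously singular at $\lambda=0$ and polynomially large in $|x-y|$. The balance between the $\lambda^{k-\eta}$ weight, the Leibniz combinatorics over $2\ell+1$ factors, and the polynomial decay supplied by $\beta>n_\star$ is delicate; it is precisely this interaction that forces the values $n_\star=n+4$ (odd $n$) and $n_\star=n+3$ (even $n$) in the hypothesis. Tracking these contributions case by case through the steps above will complete the proof.
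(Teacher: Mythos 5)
Your treatment of $M^+(\lambda)^{-1}$ (Neumann series around the invertible $M^+(0)$, derivative bounds via the resolvent/Leibniz identity with weights $\lambda^{\max(0,k-\eta)}$) and your argument in the regime $2\alpha<n<4\alpha$ match the paper's proof in substance: there the kernels $v(x)\sup_\lambda\lambda^{\max(0,j-\eta)}|\partial_\lambda^j\mR_0^+(\lambda^{2\alpha})(x,y)|v(y)$ are bounded on $L^2$ by Corollary~\ref{cor:Rj} (this is where $\beta>n_\star$ enters, through the polynomial growth $|x-y|^{j-\frac{n-1}{2}}$ of the derivative bounds), and composing them through $\Gamma_\ell$ gives the $L^2$ hypothesis of Proposition~\ref{lem:low tail low d} for every $\ell$.

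However, your explanation of the case $n\geq 4\alpha$ contains a genuine gap: you attribute the need for large $\ell$ to a lack of \emph{decay at infinity} of the free resolvent, and you propose to iterate $V\mR_0^+$ to build up spatial weights $\la\cdot\ra^{-n/2-}$. That is not the obstruction, and the proposed mechanism would not produce the pointwise bound \eqref{eq:tildegamma}. The spatial decay $\la z_1\ra^{-n/2-}\la z_2\ra^{-n/2-}$ is already supplied, independently of $\ell$, by the outer factors $v$, since $\beta>n_\star>n$ gives $v(z)\les\la z\ra^{-n/2-}$. What actually fails when $n\geq 4\alpha$ is \emph{local} boundedness: the zero-energy kernel $\mR_0^+(0)(x,y)\approx|x-y|^{2\alpha-n}$ (and the derivative bounds of Corollary~\ref{cor:Rj}) is not pointwise bounded near the diagonal, and for $n\geq 4\alpha$ it is not even locally square integrable, so a single factor cannot satisfy a pointwise estimate of the form \eqref{eq:tildegamma} no matter how fast $V$ decays. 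The paper's argument takes $\ell$ large precisely to exploit the smoothing of iterated convolutions: writing $\Gamma_\ell(\lambda)=UvA(\lambda)vM^+(\lambda)^{-1}vA(\lambda)vU$ with $A(\lambda)=(\mR_0^+ V)^{\ell-1}\mR_0^+$, each iteration improves the diagonal singularity from $|x-\cdot|^{2\alpha j-n}$ to $|x-\cdot|^{2\alpha(j+1)-n}$, so that for $\ell-1$ large (depending on $n,\alpha$) the kernel of $A(\lambda)$ and its weighted $\lambda$-derivatives are pointwise bounded by $\la z_1\ra^{3/2+\{n/2\}}\la z_2\ra^{3/2+\{n/2\}}$, after which the $v$ factors supply the decay in \eqref{eq:tildegamma}. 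To repair your proof you would need to replace the ``gain of decay per factor'' step with this gain of $2\alpha$ in local regularity per iteration (and verify it survives the $\lambda$-derivatives with the $\lambda^{\max(0,k-\eta)}$ weights), which is exactly the content of the paper's Section~\ref{sec:Minv}.
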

We prove Proposition~\ref{lem:low tail low d} below, and provide the argument for Lemma~\ref{lem:Gamma} in Section~\ref{sec:Minv}.  
To prove these results  we need the following representations of  the free resolvent given in \cite{EGG frac disp}, which were inspired by Lemmas~3.2 and 6.2 in \cite{EGWaveOp}.

\begin{prop}\label{prop:F}

		Fix $\alpha>0$ and $n\in \mathbb N$ with $n>2\alpha$. Then,   we have the representations, with $r=|x-y|$, 
		$$
		\mR_0^+(\lambda^{2\alpha})(x,y)
		=  \frac{e^{i\lambda r}}{r^{n-2\alpha}} F(\lambda r ), \text{ and}$$ 
		\be\label{Frep nbig}
		[\mR_0^+(\lambda^{2\alpha})-\mR_0^-(\lambda^{2\alpha})](x,y)= \lambda^{n-2\alpha}  \big[ e^{i\lambda r}F_+(\lambda r)+e^{-i\lambda r}F_-(  \lambda r)\big],
		\ee
		where, for all $0\leq N\leq \frac{n+1+4\alpha}2$, 
		\be\label{Fbounds nbig}
		|\partial_\lambda^N F(\lambda r)|\les \lambda^{-N} \la  \lambda r \ra^{\frac{n+1}2 -2\alpha},\, \,\,\, \,\,\, 
		|\partial_\lambda^N F_\pm(\lambda r)|\les \lambda^{-N} \la  \lambda r\ra^{-\frac{n-1}2}.
		\ee
Further,  
for all $1\leq N\leq \frac{n+1+4\alpha}2$ we have
\be\label{Fbounds2 nbig}
|\partial_\lambda^N F(\lambda r)|\les\lambda^{-N}(\lambda r)^{\min(1,n-2\alpha,2\alpha -)} \la  \lambda r \ra^{\frac{n+1}2 -2\alpha},\quad 
|\partial_\lambda^N F_{\pm}(\lambda r)|\les\lambda^{-N}(\lambda r) \la  \lambda r \ra^{-\frac{n-1}2},
\ee 
which improves the estimate above for $\lambda r\les 1$.

\end{prop}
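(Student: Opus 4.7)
My plan is to work on the Fourier side, exploiting rotational invariance to reduce to a radial integral in the frequency variable, and then to identify the oscillating phase $e^{\pm i\lambda r}$ via Bessel function asymptotics combined with Plemelj--Sokhotski. I start from
$$\mR_0^\pm(\lambda^{2\alpha})(x,y) = \frac{1}{(2\pi)^n}\int_{\R^n}\frac{e^{i\xi\cdot(x-y)}}{|\xi|^{2\alpha}-\lambda^{2\alpha}\mp i0}\,d\xi,$$
rescale $\xi=\lambda\eta$, and integrate out the angles to obtain
$$\mR_0^\pm(\lambda^{2\alpha})(x,y) = \frac{\lambda^{n-2\alpha}(\lambda r)^{-(n-2)/2}}{(2\pi)^{n/2}}\int_0^\infty\frac{J_{(n-2)/2}(\lambda r\,\tau)\,\tau^{n/2}}{\tau^{2\alpha}-1\mp i0}\,d\tau.$$
This already exhibits the predicted overall scaling $\lambda^{n-2\alpha}$ and dependence only through $\lambda r$.

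For the spectral density, Plemelj--Sokhotski combined with $\delta(\tau^{2\alpha}-1)=(2\alpha)^{-1}\delta(\tau-1)$ collapses the $\tau$-integral to a surface integral on $S^{n-1}$, producing up to constants the surface-measure Fourier transform $(\lambda r)^{-(n-2)/2}J_{(n-2)/2}(\lambda r)$. I then invoke the standard splitting $J_\nu(\rho)=e^{i\rho}b_+(\rho)+e^{-i\rho}b_-(\rho)$, where $|\partial^N b_\pm(\rho)|\les \la\rho\ra^{-1/2-N}$, to define $F_\pm(\lambda r)$ as $(\lambda r)^{-(n-2)/2}b_\pm(\lambda r)$ times a fixed constant. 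Each $\lambda$-derivative of $F_\pm(\lambda r)$ produces a factor of $r$, which combines with a $\rho^{-1}$ gain from $b_\pm$ to yield the claimed $\lambda^{-N}\la\lambda r\ra^{-(n-1)/2}$.

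For the resolvent itself, the principal-value part of $(\tau^{2\alpha}-1-i0)^{-1}$ is handled by contour deformation, following \cite{EGWaveOp}. I isolate $\tau=1$ by a smooth cut-off; on its complement the denominator is bounded away from zero and the integral inherits the regularity of the Bessel kernel, so after extracting $e^{i\lambda r}$ from the $e^{i\rho}b_+$ half of $J_\nu$ one reads off a smooth amplitude of size $\la\lambda r\ra^{(n+1)/2-2\alpha}$ by the stationary-phase count. Near $\tau=1$, I deform into the upper half-plane (matching the $+i0$ prescription), picking up a residue that again produces $e^{i\lambda r}$ times a smooth amplitude, plus an exponentially damped contour integral. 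Grouping the whole as $(e^{i\lambda r}/r^{n-2\alpha})F(\lambda r)$ and differentiating under the rescaled integral gives $|\partial_\lambda^N F|\les \lambda^{-N}\la\lambda r\ra^{(n+1)/2-2\alpha}$; the cutoff $N\le (n+1+4\alpha)/2$ exactly matches the number of $\tau$-derivatives one can spend before losing integrability in $\tau$ (gaining $2\alpha$ from the denominator and $(n+1)/2$ from the Bessel amplitude).

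For the improved small-argument bounds \eqref{Fbounds2 nbig}, I use that for $N\ge 1$, $\partial_\lambda^N F(\lambda r)$ annihilates the constant $F(0)$, which is the Riesz-potential normalization of the free resolvent at $\lambda=0$. Writing $F(\lambda r)-F(0)$ and Taylor expanding extracts a factor of $(\lambda r)^{\min(1,\,n-2\alpha,\,2\alpha-)}$: the three competing exponents reflect respectively the leading Taylor order, the decay rate $r^{-(n-2\alpha)}$ of the $\lambda=0$ resolvent (which only permits $(\lambda r)^{n-2\alpha}$ as a bound when this is smaller than $1$), and the H\"older regularity of $\tau^{2\alpha}$ at $\tau=1$ when $2\alpha\notin\N$. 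The analogous improvement for $F_\pm$ comes from $J_\nu(\rho)\sim \rho^\nu$ near zero, providing a genuine factor of $\lambda r$ after subtraction of the constant term. The main obstacle is the contour deformation for $F$: for non-integer $\alpha$ there is no clean partial-fraction decomposition of $1/(\tau^{2\alpha}-1)$ and the branch cut of $\tau^{2\alpha}$ restricts how far one can rotate. Tracking these constraints carefully to obtain both the full range of derivatives and, in particular, the $(\lambda r)^{2\alpha-}$ H\"older factor in the improved bound is the delicate step.
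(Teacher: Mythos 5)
You should know at the outset that this paper does not actually prove Proposition~\ref{prop:F}: it is imported from the companion work \cite{EGG frac disp} (the text only notes it was ``inspired by'' Lemmas~3.2 and 6.2 of \cite{EGWaveOp}), so your outline has to be judged on its own terms. Its architecture is the natural one and is largely sound: the radial reduction to a Bessel integral, the identification of $\mR_0^+-\mR_0^-$ with the rescaled surface-measure transform via Plemelj--Sokhotski, the Hankel-type splitting producing $F_\pm$, and a cutoff-plus-local-deformation analysis near $\tau=1$ to extract $e^{i\lambda r}$ for $F$ are all standard and, with care, give \eqref{Frep nbig} and \eqref{Fbounds nbig}. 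Two routine repairs are needed: the claimed global bound $|\partial^N b_\pm(\rho)|\les\la\rho\ra^{-1/2-N}$ fails as $\rho\to0$ (Hankel functions blow up there), so the splitting must be defined with a small-argument cutoff, using $e^{\mp i\rho}$ times the smooth kernel $\rho^{-(n-2)/2}J_{(n-2)/2}(\rho)$ in that region; and your $\tau$-integral is not absolutely convergent at infinity unless $2\alpha>\frac{n+1}2$, so large-$\tau$ integration by parts/regularization has to be built in before you differentiate under the integral sign (this is also where the cap $N\le\frac{n+1+4\alpha}2$ really gets used, so the bookkeeping there needs to be made explicit rather than asserted).

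The genuine gap is in the improved bound \eqref{Fbounds2 nbig} for $F$, exactly the step you flag as delicate. You attribute the $(\lambda r)^{2\alpha-}$ factor to ``the H\"older regularity of $\tau^{2\alpha}$ at $\tau=1$ when $2\alpha\notin\N$,'' but $\tau\mapsto\tau^{2\alpha}$ is real-analytic at $\tau=1$ for every $\alpha>0$, so no fractional exponent can be produced there; nor does the branch cut (which emanates from $\tau=0$) obstruct a local deformation around $\tau=1$. The fractional exponent comes from the opposite end of the integral: $|\xi|^{2\alpha}$ is not smooth at $\xi=0$, and writing $(|\xi|^{2\alpha}-\lambda^{2\alpha}\mp i0)^{-1}=|\xi|^{-2\alpha}+\lambda^{2\alpha}\,|\xi|^{-2\alpha}(|\xi|^{2\alpha}-\lambda^{2\alpha}\mp i0)^{-1}$, the first term produces the constant $F(0)$ (its deviation after extracting $e^{i\lambda r}$ is the $O(\lambda r)$ competitor), while the second, $\lambda^{2\alpha}$-weighted term is what generates the $(\lambda r)^{2\alpha}$ (up to $\epsilon$, hence ``$2\alpha-$'') correction when $n>4\alpha$, and the $(\lambda r)^{n-2\alpha}$ correction when $n<4\alpha$, since by scaling it equals $\lambda^{n-2\alpha}\Phi(\lambda r)$ with $\Phi$ bounded or of size $\rho^{4\alpha-n}$ near zero. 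Because the mechanism you propose targets the wrong singularity, your outline does not actually establish \eqref{Fbounds2 nbig} — and that estimate is not decorative: Corollary~\ref{cor:E} and the $K_3$/$E(\lambda)$ portions of the low-energy argument in Proposition~\ref{lem:low tail low d} depend on it. Redirecting the analysis to the $\tau=0$ (equivalently $\xi=0$) singularity, with the expansion above, is what is needed to close this step.
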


We refer the reader to \cite{EGG frac disp} for a full, detailed proof.  For convenience, we provide a brief sketch of the argument.  The free resolvent has a dilation scaling law, so all results may be derived from the case where $\lambda = 1$. The resolvent kernel when $\lambda = 1$ is the Fourier transform of a distribution $\frac{1}{|\xi|^{2\alpha} - (1 + i0)}$.  The results follow from applying an appropriate partition in $\xi$ using smooth cut-off functions and carefully analyzing the Fourier transform of the resulting pieces.

The singularity on the sphere $|\xi| = 1$ dominates the behavior for large $r$. It is a constant multiple of the singularity for the classical Schrodinger resolvent $\frac{1}{|\xi|^2 - (1+i0)}$, plus a remainder that is locally analytic.  The bounds on $F^\pm(r)$ and its derivatives for large $r$ are the same as for the classical Schrodinger resolvent kernel modulo scaling.

The $|\xi|^{2\alpha}$ cusp at the origin has a Fourier transform decaying like $|r|^{-n-2\alpha}$. This gives $F(r)$ an additional term for large $r$ which behaves like $e^{-i r} r^{-4\alpha}$. That is why the estimate in Proposition 3.3 for derivatives of $F(r)$ are only valid up to order $\frac{n+1}{2} + 2\alpha$. Similarly for derivatives of $F_\pm(r)$.

The Fourier transform of any compact cutoff of $\frac{1}{|\xi|^{2\alpha} - (1+i0)}$ is a spherically symmetric, analytic function in $R^n$. Its power series near the origin can only contain even positive integer powers of $r$.

The tail of $\frac{1}{|\xi|^{2\alpha} - (1+i0)}$ can be expanded out in powers of $|\xi|^{-2\alpha}$. For small $r$, the power series expansion of $F(r)$ therefore includes terms of order $r^{2k\alpha}$, with logarithmic corrections if $2k\alpha - n$ is an even integer.  These terms all cancel in the difference of resolvent continuations because the difference is just a surface measure supported on the sphere $|\xi| = 1$.

The power series for $F(r)$ and $F_\pm(r)$ for small $r$ all begin with an $r^0$ term.  Its $N^{th}$ derivative has size zero. The improvement for $N^{th}$ derivative estimates for small $r$ comes from considering the first non-constant term in the power series expansion instead.

\begin{prop}\label{prop:LAP}
		
		Fix $\alpha>\f12$ and $n>2\alpha$.  Assume that $H$ has no embedded eigenvalues.  Then when $\lambda\gtrsim 1$, we have
		$$
			\|\la x\ra^{-\f12-}\mR_V(\lambda^{2\alpha}) \la y\ra^{-\f12 -} \|_{L^2\to L^2} \les  \lambda ^{1-2\alpha},
		$$
		provided that $|V(x)|\les \la x\ra^{-\beta}$ for some $\beta>1$.  Further, for any $j\in \N$ we have
		$$
			\|\la x\ra^{-j-\f12-}\partial_\lambda^j\mR_V(\lambda^{2\alpha}) \la y\ra^{-j-\f12 -} \|_{L^2\to L^2} \les  \lambda ^{1-2\alpha},
		$$
		provided that $|V(x)|\les \la x\ra^{-\beta}$ for some $\beta>1+2j$. 
	 
	\end{prop}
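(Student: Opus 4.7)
My approach is to deduce this perturbed limiting absorption principle from a free version for $\mR_0^\pm(\lambda^{2\alpha})$ via the symmetric resolvent identity
$$\mR_V^\pm(\lambda^{2\alpha}) = \mR_0^\pm(\lambda^{2\alpha}) - \mR_0^\pm(\lambda^{2\alpha})\,v\,M^\pm(\lambda)^{-1}\,v\,\mR_0^\pm(\lambda^{2\alpha}),$$
where $v=|V|^{1/2}$, $U=\sgn V$, and $M^\pm(\lambda)=U+v\mR_0^\pm(\lambda^{2\alpha})v$. All the work concentrates on (a) establishing the free LAP at high energies and (b) proving that $M^\pm(\lambda)^{-1}$ is bounded on $L^2$ uniformly for $\lambda\gtrsim 1$.

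For step (a), the representation \eqref{Frep nbig} exhibits the spectral measure as $\lambda^{n-2\alpha}$ times a Fourier restriction to the sphere $|\xi|=\lambda$; the Stein-Tomas $TT^\ast$ inequality then yields
$$\|\la x\ra^{-\f12-}[\mR_0^+-\mR_0^-](\lambda^{2\alpha})\la y\ra^{-\f12-}\|_{L^2\to L^2}\les \lambda^{1-2\alpha}.$$
A Kato-type bootstrap (or direct integration-by-parts on the oscillatory kernel $e^{\pm i\lambda r}F(\lambda r)/r^{n-2\alpha}$ using \eqref{Fbounds nbig}) promotes this bound to the full resolvents $\mR_0^\pm(\lambda^{2\alpha})$. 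For $\partial_\lambda^j\mR_0^\pm$, each derivative hitting $e^{\pm i\lambda r}$ produces a factor of $r\leq \la x\ra+\la y\ra$ absorbed by the extra $\la x\ra^{1/2}\la y\ra^{1/2}$ weight on each side, while derivatives hitting $F,F_\pm$ contribute only $\lambda^{-1}$ factors with no change of shape by \eqref{Fbounds nbig}-\eqref{Fbounds2 nbig}. Thus the same $\lambda^{1-2\alpha}$ bound persists for $\partial_\lambda^j\mR_0^\pm$ with the heavier weights.

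For step (b), the decay $|V|\les\la x\ra^{-\beta}$, $\beta>1$, makes multiplication by $v$ map $L^2\to L^{2,\f12+}$ boundedly, so the free LAP gives $\|v\mR_0^\pm(\lambda^{2\alpha})v\|_{L^2\to L^2}\les\lambda^{1-2\alpha}\to 0$ as $\lambda\to\infty$. Beyond some $\lambda_1$, the decomposition $M^\pm(\lambda)=U(I+Uv\mR_0^\pm v)$ is invertible with uniform bound via a Neumann series. On the compact range $[1,\lambda_1]$, $v\mR_0^\pm v$ is compact and norm-continuous in $\lambda$, so the Fredholm alternative reduces invertibility of $M^\pm(\lambda)$ to triviality of its kernel. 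A nonzero $f\in\ker M^\pm(\lambda)$ would yield $\phi:=\mR_0^\pm(\lambda^{2\alpha})vf\in L^{2,-\f12-}$ satisfying $H\phi=\lambda^{2\alpha}\phi$ in the distributional sense (using $V=Uv^2$), and a Rellich-type bootstrap upgrading $\phi$ to $L^2$ via the decay of $V$ would contradict the no-embedded-eigenvalue hypothesis. Hence $\sup_{\lambda\in[1,\lambda_1]}\|M^\pm(\lambda)^{-1}\|_{L^2\to L^2}<\infty$.

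Plugging (a) and (b) into the resolvent identity, together with $\|\la x\ra^{-\f12-}\mR_0^\pm v\|_{L^2\to L^2},\|v\mR_0^\pm\la y\ra^{-\f12-}\|_{L^2\to L^2}\les\lambda^{1-2\alpha}$, gives the $j=0$ estimate; the composition of two such factors produces $\lambda^{2(1-2\alpha)}\leq \lambda^{1-2\alpha}$ since $\alpha>\f12$ and $\lambda\geq 1$. The derivative estimates follow by Leibniz applied to the resolvent identity, combined with the inductive formula $\partial_\lambda M^{-1}=-M^{-1}(\partial_\lambda M)M^{-1}$ and the weighted derivative bounds for $\mR_0^\pm$; each derivative costs one extra power of $\la x\ra$ and of $\la y\ra$ per side, matching the hypothesis $\beta>1+2j$. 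The step I anticipate to be the hardest is the Rellich argument in (b): the classical Agmon exponential-decay techniques for $-\Delta+V$ do not transfer verbatim to the nonlocal operator $(-\Delta)^\alpha+V$, so one must invoke (or adapt) a fractional unique continuation / Rellich-type theorem under $|V|\les\la x\ra^{-\beta}$, $\beta>1$, to rule out embedded positive-energy $L^2$ eigenfunctions.
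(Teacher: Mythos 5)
The paper itself contains no proof of Proposition~\ref{prop:LAP}: it is imported (together with Proposition~\ref{prop:F}) from the companion paper \cite{EGG frac disp}, so there is no in-document argument to compare against. Your outline is the standard route one would expect there -- symmetric resolvent identity, a free high-energy LAP with decay $\lambda^{1-2\alpha}$, Neumann series for $M^\pm(\lambda)^{-1}$ once $\|v\mR_0^\pm(\lambda^{2\alpha})v\|\les\lambda^{1-2\alpha}\ll1$, Fredholm plus the spectral assumption on compact $\lambda$-intervals, and Leibniz for the derivatives -- and your exponent bookkeeping ($\lambda^{2(1-2\alpha)}\le\lambda^{1-2\alpha}$, one extra power of weight per derivative forcing $\beta>1+2j$) is consistent with the statement. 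Two steps, however, are asserted rather than argued, and as written they would not compile into a proof. First, the free estimate: Stein--Tomas controls $[\mR_0^+-\mR_0^-](\lambda^{2\alpha})$, but a ``Kato-type bootstrap'' does not by itself promote a bound on the imaginary part to one on $\mR_0^\pm$, and integration by parts in the kernel variable cannot produce the weighted $L^2$ bound either, since the absolute value of the kernel ($\sim\lambda^{\frac{n+1}2-2\alpha}|x-y|^{\frac{1-n}2}$ for $\lambda|x-y|\gtrsim1$) fails the Schur test with $\la x\ra^{-\frac12-}$ weights. What is needed is the genuine high-energy LAP for the full resolvent, e.g.\ Agmon's argument adapted to the symbol $|\xi|^{2\alpha}-\lambda^{2\alpha}$, which vanishes simply on $\{|\xi|=\lambda\}$ with $|\nabla(|\xi|^{2\alpha})|\approx\lambda^{2\alpha-1}$ there (this is exactly where the factor $\lambda^{1-2\alpha}$ comes from), or equivalently the kernel representation of Proposition~\ref{prop:F} combined with the known weighted bound $\|\la x\ra^{-\frac12-}e^{i\lambda|x-y|}|x-y|^{\frac{1-n}2}\la y\ra^{-\frac12-}\|_{L^2\to L^2}\les\lambda^{\frac{1-n}2}$, which does exploit the oscillation.

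Second, in the Fredholm step you do not need a fractional unique continuation theorem, and flagging that as the obstruction misplaces the difficulty: absence of embedded eigenvalues is a hypothesis of the proposition (and an overarching assumption of the paper). What you must actually supply is the ``resonance implies eigenvalue'' bootstrap: if $M^\pm(\lambda)f=0$, then $\operatorname{Im}\la M^\pm(\lambda)f,f\ra=0$ forces $\widehat{vf}$ to vanish on the sphere $\{|\xi|=\lambda\}$, and since $|\xi|^{2\alpha}-\lambda^{2\alpha}$ vanishes only to first order there, $\phi=\mR_0^\pm(\lambda^{2\alpha})vf$ upgrades from $L^{2,-\frac12-}$ to $L^2$ exactly as in Agmon's classical argument; the nonlocal symbol differs from $|\xi|^2-\lambda^2$ only by a smooth nonvanishing factor near the sphere (compare the function $p_\omega$ in Section~\ref{sec:Born}), so this transfers essentially verbatim. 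With that supplied, $\phi$ is an embedded eigenfunction, hence zero, hence $f=-Uv\phi=0$, and continuity plus compactness give the uniform bound on $[1,\lambda_1]$. One last small correction: the extra weight available per $\lambda$-derivative is a full power $\la x\ra\,\la y\ra$ on each side (the weights go from $-\frac12-$ to $-j-\frac12-$), not $\la x\ra^{1/2}\la y\ra^{1/2}$; it is this full power that absorbs the factor $r\les\la x\ra\la y\ra$ produced when a derivative hits $e^{\pm i\lambda r}$, and that matches the hypothesis $\beta>1+2j$ when derivatives fall on the internal factors $v\,\partial_\lambda^{j_1}\mR_0^\pm(\lambda^{2\alpha})\,v$ of $\partial_\lambda M^\pm(\lambda)$.
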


We have the corollaries of Proposition~\ref{prop:F}: 
\begin{corollary}\label{cor:Rj}
	
	Under the conditions of Proposition~\ref{prop:F} we have
	$$
	\sup_{0<\lambda<1}|\mR_0(\lambda^{2\alpha})(x,y)| \les
	|x-y|^{2\alpha -n}+|x-y|^{-\frac{n-1}{2}}.
	$$
	Further, for sufficiently small $\eta>0$, we have
	$$
	\sup_{0<\lambda<1}|\lambda^{\max(0,N-\eta)} \partial_\lambda^N \mR_0(\lambda^{2\alpha })(x,y)| \les 
	|x-y|^{2\alpha +\eta-n}+|x-y|^{N-\frac{n-1}{2}},\,\,\,1\leq N\leq \frac{n+1+4\alpha}2.
	$$
		
\end{corollary}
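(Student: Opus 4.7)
The plan is to substitute the representation
$$\mR_0^+(\lambda^{2\alpha})(x,y) = r^{2\alpha-n} e^{i\lambda r} F(\lambda r), \qquad r:=|x-y|,$$
from Proposition~\ref{prop:F} and to carry out a case analysis based on the size of the dimensionless parameter $\lambda r$. The leverage in both bounds comes from a single trade-off: on the regime $\lambda r\les 1$ we use the constraint $\lambda<1$ directly, while on $\lambda r\gtrsim 1$ we additionally have the elementary inequality $\lambda\gtrsim r^{-1}$, which can be used to convert any negative power of $\lambda$ into a positive power of $r$.

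For $N=0$, the estimate $|F(\lambda r)|\les \la \lambda r\ra^{(n+1)/2-2\alpha}$ from \eqref{Fbounds nbig} gives $|\mR_0^+(\lambda^{2\alpha})|\les r^{2\alpha-n}$ when $\lambda r\les 1$, and $\lambda^{(n+1)/2-2\alpha}r^{-(n-1)/2}$ when $\lambda r\gtrsim 1$. In the latter regime, depending on the sign of $\tfrac{n+1}{2}-2\alpha$ we either bound $\lambda<1$ trivially or exchange the negative $\lambda$-power for the corresponding positive $r$-power; the two outcomes combine to $r^{2\alpha-n}+r^{-(n-1)/2}$, as claimed.

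For $1\leq N\leq \tfrac{n+1+4\alpha}{2}$, I would apply Leibniz:
$$\partial_\lambda^N \mR_0^+(\lambda^{2\alpha})(x,y) = r^{2\alpha-n}\sum_{k=0}^N \binom{N}{k}(ir)^{N-k} e^{i\lambda r}\, \partial_\lambda^k F(\lambda r).$$
For $k=0$ I would use \eqref{Fbounds nbig} unchanged, and for $k\geq 1$ the improved bound \eqref{Fbounds2 nbig}, which supplies an extra factor $(\lambda r)^{\mu}$ with $\mu:=\min(1,n-2\alpha,2\alpha-)>0$. This extra factor is precisely what accommodates the $\eta$-loss on the right-hand side. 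Fixing any $\eta\in(0,\mu)$ and multiplying by $\lambda^{N-\eta}$, the $k$th summand takes the shape $\lambda^{a_k} r^{b_k}\la \lambda r\ra^{(n+1)/2-2\alpha}$, where a direct count gives $b_k-a_k=2\alpha+\eta-n$ and $a_k\geq 0$ uniformly in $0\leq k\leq N$ (this positivity is where the choice $\eta<\mu$ enters).

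The two-regime analysis then proceeds exactly as in the $N=0$ case. On $\lambda r\les 1$ the bracket is harmless and the factor rewrites as $(\lambda r)^{a_k}r^{2\alpha+\eta-n}\les r^{2\alpha+\eta-n}$. On $\lambda r\gtrsim 1$, an additional factor $(\lambda r)^{(n+1)/2-2\alpha}$ appears, and the resulting expression takes the form $\lambda^{A_k}r^{B_k}$ with $B_k\leq N-\tfrac{n-1}{2}$; depending on the sign of $A_k$, one either absorbs $\lambda^{A_k}\les 1$ to produce $r^{N-(n-1)/2}$, or exchanges $\lambda^{A_k}\les r^{-A_k}$ via $\lambda\gtrsim r^{-1}$ to recover $r^{2\alpha+\eta-n}$. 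The only delicate point of the whole argument is this choice $\eta<\mu$ together with the use of the improved bound \eqref{Fbounds2 nbig} in place of \eqref{Fbounds nbig} for $k\geq 1$; without the extra $(\lambda r)^{\mu}$ factor, the positivity $a_k\geq 0$ would fail and the $\eta$-loss could not be matched. Everything else is routine bookkeeping of exponents.
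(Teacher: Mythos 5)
Your proposal is correct and is essentially the argument the paper intends: the corollary is left as a direct consequence of Proposition~\ref{prop:F}, obtained exactly by inserting the representation $r^{2\alpha-n}e^{i\lambda r}F(\lambda r)$, applying Leibniz, splitting into $\lambda r\les 1$ and $\lambda r\gtrsim 1$, and using the improved bound \eqref{Fbounds2 nbig} for the derivative terms to absorb the $\eta$-loss (with $\eta<\min(1,n-2\alpha,2\alpha-)$). Your exponent bookkeeping, including the trade $\lambda^{A}\les r^{-A}$ for negative $A$ via $\lambda\gtrsim r^{-1}$ and the use of $r\gtrsim 1$ in the regime $\lambda r\gtrsim 1$, checks out.
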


\begin{proof}
	Using the representation in Proposition~\ref{prop:F},
	when $N=0$ the claim follows from \eqref{Fbounds nbig} and the restriction to $0<\lambda <1$.  For the derivatives, the claim follows by the product rule and considering cases of $\lambda r\ll 1$ and $\lambda r\gtrsim 1$.  When $\lambda r\ll1$, the dominant terms arise when the derivatives act on $F(\lambda r)$ and we use \eqref{Fbounds2 nbig} with $\eta=\min(1,n-2\alpha, 2\alpha-)>0$.  Whenever a derivative acts on the exponential, we use that $r\les \lambda^{-1}$.  
	
	When $\lambda r\gtrsim 1$, the dominant term occurs when the derivatives act on the exponential leading to the factor of $|x-y|^N$.  For the remaining terms
	the claim follows by applying \eqref{Fbounds nbig} and using that $\lambda^{-N}\les r^N$.
\end{proof}

\begin{corollary}\label{cor:E} Let $E(\lambda ) 
(r):=\mR_0^+(\lambda^{2\alpha })(r)-\mR_0^+(0)(r)$. Then, for $\lambda r\ll 1$ and all sufficiently small  $\eta>0$,  we have 
$$
| \partial_\lambda^N E(\lambda)(r)|\les \lambda^{\eta-N} r^{2\alpha -n+\eta}, \,\,\,\,0\leq N\leq\frac{n+1+4\alpha}2.
$$
When $\lambda r\gtrsim 1$, we have 
$$| E(\lambda)(r)| \les r^{\frac{1-n}{2} }\lambda^{\frac{n+1}2-2\alpha } +r^{2\alpha -n}, \text{ and}$$
$$|\partial_\lambda^{N}E(\lambda)(r)| \les r^{\frac{1-n}{2}+N}\lambda ^{\frac{n+1}2-2\alpha }, \,\,\,\,1\leq N\leq\frac{n+1+4\alpha}2.$$  
\end{corollary}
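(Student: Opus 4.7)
\medskip

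\noindent\emph{Proof plan for Corollary~\ref{cor:E}.} The plan is to reduce everything to the explicit representation
$$
\mR_0^+(\lambda^{2\alpha})(r) = \frac{e^{i\lambda r}}{r^{n-2\alpha}}\, F(\lambda r),
\qquad
\mR_0^+(0)(r) = \frac{F(0)}{r^{n-2\alpha}},
$$
from Proposition~\ref{prop:F}, so that $E(\lambda)(r) = r^{2\alpha-n}[e^{i\lambda r}F(\lambda r)-F(0)]$ and, for $N\geq 1$, $\partial_\lambda^N E(\lambda)(r) = \partial_\lambda^N \mR_0^+(\lambda^{2\alpha})(r)$. The whole argument is then a Leibniz expansion of $\partial_\lambda^N[e^{i\lambda r}F(\lambda r)]$ combined with either \eqref{Fbounds nbig} (large $\lambda r$) or the refined estimate \eqref{Fbounds2 nbig} (small $\lambda r$). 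Note $\mu:=\min(1,n-2\alpha,2\alpha-) >0$ since $\alpha>1$ and $n>2\alpha$; this $\mu$ is what provides the crucial trade-off between powers of $\lambda$ and $r$.

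For the regime $\lambda r\ll 1$, I would handle $N\geq 1$ first. Writing
$$
\partial_\lambda^N[e^{i\lambda r}F(\lambda r)] \;=\; \sum_{k=0}^{N}\binom{N}{k}(ir)^{N-k} e^{i\lambda r}\,\partial_\lambda^k F(\lambda r),
$$
the $k=0$ term contributes $r^{N}\cdot O(1)$, while \eqref{Fbounds2 nbig} bounds each $k\geq 1$ term by $r^{N-k}\lambda^{-k}(\lambda r)^{\mu}$ (using $\la\lambda r\ra\approx 1$). Dividing by $r^{n-2\alpha}$ and factoring out $\lambda^{\eta-N}r^{2\alpha-n+\eta}$ leaves $(\lambda r)^{N-k+\mu-\eta}$ (resp.\ $(\lambda r)^{N-\eta}$ for $k=0$), both bounded by $1$ when $\eta$ is small enough. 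For $N=0$, I would split $e^{i\lambda r}F(\lambda r)-F(0)$ as $(e^{i\lambda r}-1)F(\lambda r)+(F(\lambda r)-F(0))$; the first piece is $O(\lambda r)$ and the second $O((\lambda r)^\mu)$ by the mean-value theorem applied to the derivative bound \eqref{Fbounds2 nbig}, yielding $|E(\lambda)(r)|\les(\lambda r)^\mu r^{2\alpha-n}$, which again absorbs into $\lambda^\eta r^{2\alpha-n+\eta}$ using $\lambda r\ll 1$ and $\mu>\eta$.

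For the regime $\lambda r\gtrsim 1$, the $N=0$ bound is immediate from the triangle inequality: $|\mR_0^+(0)(r)|\les r^{2\alpha-n}$, while \eqref{Fbounds nbig} with $N=0$ gives $|F(\lambda r)|\les(\lambda r)^{\frac{n+1}{2}-2\alpha}$, so $|\mR_0^+(\lambda^{2\alpha})(r)|\les r^{\frac{1-n}{2}}\lambda^{\frac{n+1}{2}-2\alpha}$. For $N\geq 1$, I again use the Leibniz formula above with the standard bound \eqref{Fbounds nbig}. Each summand is bounded by $r^{N-k}\lambda^{-k}(\lambda r)^{\frac{n+1}{2}-2\alpha} = r^{N}(\lambda r)^{-k}(\lambda r)^{\frac{n+1}{2}-2\alpha}$, which is maximized at $k=0$ precisely when $\lambda r\gtrsim 1$. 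Dividing by $r^{n-2\alpha}$ yields $r^{N+\frac{1-n}{2}}\lambda^{\frac{n+1}{2}-2\alpha}$, as claimed.

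The main obstacle — really the only subtle point — is distinguishing the two regimes cleanly: in $\lambda r\ll 1$ one \emph{must} use the refined vanishing factor $(\lambda r)^{\mu}$ from \eqref{Fbounds2 nbig}, since otherwise Leibniz terms with $k\geq 1$ produce an unwanted $\lambda^{-k}$ that the desired bound $\lambda^{\eta-N}r^{2\alpha-n+\eta}$ cannot absorb, while in $\lambda r\gtrsim 1$ the plain bound \eqref{Fbounds nbig} suffices and the $k=0$ term in Leibniz dominates. The rest is bookkeeping of exponents.
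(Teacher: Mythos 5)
Your proposal is correct, and it is exactly the argument the paper intends: the corollary is stated as a direct consequence of Proposition~\ref{prop:F}, and your Leibniz expansion of $\partial_\lambda^N[e^{i\lambda r}F(\lambda r)]$, using \eqref{Fbounds2 nbig} (together with the $O(\lambda r)$ gain from $e^{i\lambda r}-1$ and the fundamental theorem of calculus for $F(\lambda r)-F(0)$) when $\lambda r\ll 1$ and \eqref{Fbounds nbig} when $\lambda r\gtrsim 1$, is the natural bookkeeping that yields all three bounds. The exponent checks (absorbing $(\lambda r)^{N-k+\mu-\eta}\les 1$ for $\eta<\mu=\min(1,n-2\alpha,2\alpha-)$, and the $k=0$ term dominating when $\lambda r\gtrsim1$) are accurate.
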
 

\begin{proof}
	When $\lambda r\ll 1$ the claim for $N>0$ follows from the product rule and \eqref{Fbounds2 nbig} as in Corollary~\ref{cor:Rj}.  When $N=0$, we use \eqref{Fbounds2 nbig} and the Mean Value Theorem.
	
	When $\lambda r\gtrsim 1$, the dominant terms arise when the derivative acts on the exponential and the bounds for $N>0$ follow from \eqref{Fbounds nbig}.  When $N=0$ the claim follows since both $\mR_0(\lambda^{2\alpha})(r)$ and $\mR_0(0)(r)$ satisfy the claimed bound.
\end{proof}

With these resolvent bounds, we now show that the low energy portion extends to a bounded operator on the full range $1\leq p\leq \infty$.
We say an operator $K$ with integral kernel $K(x,y)$ is admissible if
$$
\sup_{x\in \R^n} \int_{\R^n} |K(x,y)|\, dy+	\sup_{y\in \R^n} \int_{\R^n} |K(x,y)|\, dx<\infty.
$$
By the Schur test, it follows that an operator with admissible kernel is bounded on $L^p(\R^n)$ for all $1\leq p\leq \infty$. We are now ready to prove Proposition~\ref{lem:low tail low d}.

\begin{proof}[Proof of Proposition~\ref{lem:low tail low d}]
	Using the representations in Proposition~\ref{prop:F}  with $r_1=|x-z_1|$ and $r_2:=|z_2-y|$ we see that $K(x,y)$ is the difference of  
 	\be \label{Kdefini}
 	K_\pm(x,y)=\int_{\R^{2n}}  \frac{ v(z_1) v(z_2) }{r_1^{n-2\alpha }   } \int_0^\infty e^{i\lambda (r_1 \pm r_2 )} \chi(\lambda) \lambda^{n-1} \Gamma(\lambda)(z_1,z_2) F(\lambda r_1)F_\pm(\lambda r_2)  d\lambda dz_1 dz_2 .
 	\ee
 	We write
 	$$
 	K(x,y) =: \sum_{j=1}^4 K_{j}(x,y),
 	$$
 	where the integrand in  
 	$K_1$ is  restricted  to the set $r_1,r_2\les 1$, in $K_2$ to the set $r_1 \approx r_2\gg 1 $,
 	in $K_3$ to the set $r_2\gg \la r_1\ra $,  in $K_4$ to the set $r_1 \gg \la r_2\ra$. We define $K_{j,\pm}$ analogously. 
 	
 	Using the bounds of Proposition~\ref{prop:F}  for $\lambda r\ll1$,  we bound the contribution of 	$|K_{1,\pm}(x,y)|$ by 
 	$$
 	\int_{\R^{2n}}  \frac{ v(z_1) v(z_2)\chi_{r_1,  r_2\les 1} }{r_1^{n-2\alpha }   }   \widetilde\Gamma (z_1,z_2)   dz_1 dz_2.
 	$$
 	Therefore
 	$$
 	\int  |K_{1,\pm}(x,y)|dx    \les \big\|| \cdot|^{2\alpha -n}\big\|_{L^1(B(0,1))} \|v\|_{L^2}^2 \|\widetilde \Gamma\|_{L^2\to L^2}\les 1,
 	$$
 	uniformly in $y$. Similarly, provided that $2\alpha <n<4\alpha $,
 	$$\int  |K_{1,\pm}(x,y)|dy\les \|\widetilde\Gamma\|_{L^2\to L^2} \|v\|_{L^2} \|v(\cdot) |x-\cdot|^{2\alpha -n}\|_{L^2}\les 1 
 	$$
 	holds uniformly in $x$. When $n\geq 4\alpha $, we use the decay bound \eqref{eq:tildegamma} on $\widetilde \Gamma$ to obtain
 	$$\int  |K_{1,\pm}(x,y)|dy\les \int \la z_1\ra^{-n-}\la z_2 \ra^{-n-} r_1^{2\alpha -n} dz_1dz_2 \les 1,  
 	$$
 	which implies that $K_1$ is admissible. 
 	
 	For $K_2$, we restrict ourself to $K_{2,-}$ since the $+$ sign is easier to handle. We integrate by parts twice in the $\lambda$ integral when $\lambda |r_1-r_2|\gtrsim1$ (using  Proposition~\ref{prop:F} and the definition of $\widetilde \Gamma$) and estimate directly when $\lambda |r_1-r_2|\ll1$ to obtain 
 	\begin{align} 
 		\nonumber
 		|K_{2,-}(x,y)|\les \int_{\R^{2n}} \frac{v(z_1)  \widetilde\Gamma (z_1,z_2) v(z_2)\chi_{r_1\approx r_2\gg 1 }}{  r_1^{n-2\alpha }  } \int_0^\infty  \chi(\lambda) \lambda^{n-1} \chi(\lambda|r_1-r_2|)  \la\lambda r_1\ra^{1-2\alpha }    d\lambda dz_1 dz_2 \\ \nonumber
 		+ \int_{\R^{2n}} \frac{v(z_1)  \widetilde\Gamma (z_1,z_2) v(z_2)\chi_{r_1\approx r_2\gg 1 }}{  r_1^{n-2\alpha }  } \int_0^\infty  \frac {\chi(\lambda) \lambda^{n-3} \widetilde\chi(\lambda|r_1-r_2|)   \la\lambda r_1\ra^{1-2\alpha }}{|r_1-r_2|^2}     d\lambda dz_1 dz_2\\
 		\les  \int_{\R^{2n}} \frac{v(z_1)  \widetilde\Gamma (z_1,z_2) v(z_2)\chi_{r_1\approx r_2\gg 1 }}{  r_1^{n-2\alpha }  } \int_0^\infty  \frac {\chi(\lambda) \lambda^{n-1}   \la\lambda r_1\ra^{1-2\alpha }}{\la \lambda (r_1-r_2)\ra^2}     d\lambda dz_1 dz_2. \nonumber
 	\end{align}
 	Therefore, passing to the polar coordinates in $x$ integral (centered at $z_1$) and noting 
 	$1-2\alpha <0$, we have 
 	$$
 	\int|K_{2,-}(x,y)| dx 
 	\les  \int_{\R^{2n}} \int_0^1 \int_{r_1\approx r_2\gg 1}  v(z_1)  \widetilde\Gamma (z_1,z_2) v(z_2)    \frac {  \lambda^{n-2\alpha }   }{\la \lambda (r_1-r_2)\ra^2}      d r_1 d\lambda dz_1 dz_2
 	$$
 	$$  \les  \int_{\R^{2n}} \int_0^1 \int_{\R}  v(z_1)  \widetilde\Gamma (z_1,z_2) v(z_2)    \frac {  \lambda^{n-2\alpha -1}   }{\la  \eta \ra^2}      d \eta  d\lambda dz_1 dz_2\les 1, 
 	$$
 	uniformly in $y$. In the second line we defined $\eta=\lambda (r_1-r_2)$ in the $r_1$ integral and used   $n-2\alpha -1>-1$.
 	Since $r_1\approx r_2$, the integral in $y$ can be bounded uniformly in $x$ and hence the contribution of $K_2$ is admissible.  We now consider the contribution of
 	\begin{multline} \label{K4ndef}
 		K_{4,\pm}(x,y)= 
 		\int_{\R^{2n}}  \frac{v(z_1) v(z_2)\chi_{r_1 \gg \la r_2\ra}}{r_1^{n-2\alpha }}  \\  \int_0^\infty  e^{i  \lambda (r_1\pm r_2) } F(  \lambda r_1) \chi(\lambda)   \Gamma(\lambda)(z_1,z_2) \lambda^{n-1}  F_\pm(\lambda r_2)  \ d\lambda dz_1 dz_2 .
 	\end{multline}
 	When $\lambda r_1\les 1$, using \eqref{Fbounds nbig}, we bound $|F_\pm(\lambda r_2)|, |F(\lambda r_1)|\les 1$ and estimate the $\lambda$ integral  by 
 	$r_1^{-n} \widetilde \Gamma(z_1,z_2)$, whose contribution to  $K_4$ is bounded by 
 	$$
 	\int_{\R^{2n}}  \frac{v(z_1) v(z_2) \widetilde \Gamma(z_1,z_2) \chi_{r_1 \gg \la r_2\ra}}{r_1^{2n-2\alpha}} dz_1 dz_2 .
 	$$
 	This is admissible since $n>2\alpha$. 
 	
 	When $\lambda r_1 \gtrsim 1$, we integrate by parts $N=\lceil n/2\rceil +1 $ times 
 	(using \eqref{Fbounds nbig}) to obtain the bound
 	$$
 	\frac1{|r_1\pm r_2|^{N}}\int_0^\infty \Big|\partial_\lambda^{N} 
 	\big[F(  \lambda r_1) \widetilde\chi(\lambda r_1)  \chi(\lambda)  \lambda^{n-1} \Gamma(\lambda)(z_1,z_2)   F_\pm(\lambda r_2) \big] \Big| d\lambda 
 	$$  
 	$$
 	\les r_1^{-N} \sum_{0\leq j_1+j_2+j_3+j_4\leq N, \, j_i\geq 0  }  
 	\int_{\frac1{r_1}}^1    \lambda^{\frac{n+1}2-2\alpha -j_1}  r_1^{\frac{n+1}2-2\alpha }   \lambda^{n-1-j_2}  \big|\partial_\lambda^{j_3}\Gamma(\lambda)(z_1,z_2)  \big|\frac{\lambda^{-j_4}}{\la \lambda r_2\ra^{\frac{n-1}2}  } d\lambda 
 	$$
 	$$
 	\les r_1^{ \frac{n+1}2-2\alpha -N }  \widetilde \Gamma(z_1,z_2) \sum_{0\leq j_1+j_2+j_3+j_4\leq N, \, j_i\geq 0  }  
 	\int_{\frac1{r_1}}^1    \lambda^{\frac{3n-1}2-2\alpha -j_1-j_2-j_3-j_4}        d\lambda 
 	$$
 	$$
 	\les r_1^{ -2\alpha-\frac12 -\{\frac{n}2\} }  \widetilde \Gamma(z_1,z_2) \int_{\frac1{r_1}}^1    \lambda^{n -2\alpha -\frac32 -\{\frac{n}2\}       }  d\lambda    \les  r_1^{ -2\alpha  -\epsilon}\widetilde \Gamma(z_1,z_2) \int_{\frac1{r_1}}^1    \lambda^{n-2\alpha -1-\epsilon}  d\lambda$$
 	$$  \les  r_1^{  -2\alpha - \epsilon}\widetilde \Gamma(z_1,z_2),
 	$$
 	provided that $0<\epsilon<n-2\alpha$. The contribution of this to \eqref{K4ndef} is 
 	$$
 	\les \int_{\R^{2n}}  \frac{v(z_1) v(z_2)\chi_{r_1 \gg \la r_2\ra}}{r_1^{n+\epsilon}} dz_1 dz_2, 
 	$$
which is   admissible as $\epsilon>0$.

 	We now consider $K_3$:
 	\begin{multline} \label{K3ndef1}
 		K_3(x,y)=  
 		\int_{\R^{2n}}  \chi_{r_2 \gg \la r_1\ra} v(z_1) v(z_2)\\ 	\int_0^\infty \lambda^{2\alpha-1} \chi(\lambda)   \mR_0^+(\lambda^{2\alpha }) (r_1)    \Gamma(\lambda)(z_1,z_2) [\mR_0^+(\lambda^{2\alpha})- \mR_0^-(\lambda^{2\alpha})](r_2)  \ d\lambda dz_1 dz_2 .
 	\end{multline}
 	We write
 	$$
 	\mR_0^+(\lambda^{2\alpha })=\mR_0^+(0) + [\mR_0^+(\lambda^{2\alpha })-\mR_0^+(0)]=: G_0+E(\lambda),
 	$$
 	$$
 	\Gamma(\lambda)= \Gamma(0)+[\Gamma(\lambda)-\Gamma(0)]=:\Gamma(0)+\Gamma_1(\lambda).
 	$$
 	Here $G_0=\mR_0^+(0)=c_{n,\alpha}r_1^{2\alpha -n}$.
 	We first consider the contribution of $G_0\Gamma(0)$ to $K_3$:
 	$$
 	\int_{\R^{2n}}  \chi_{r_2 \gg \la r_1\ra} v(z_1) v(z_2) 	G_0(r_1)\Gamma(0)(z_1,z_2) \int_0^\infty \lambda^{2\alpha-1} \chi(\lambda)    [\mR_0^+(\lambda^{2\alpha})- \mR_0^-(\lambda^{2\alpha})](r_2)  \ d\lambda dz_1 dz_2. 
 	$$
 	Identifying the $\lambda$ integral as a constant multiple of the kernel of $\chi((-\Delta)^{\frac1{2\alpha}})$,   we may bound it as $O(\la r_2\ra^{-N})$ for all $N$   since $\chi(|\xi|^{\frac1{\alpha}})$ is Schwartz.  Therefore, we have the bound
 	$$
 	\int_{\R^{2n}}  \chi_{r_2 \gg \la r_1\ra} v(z_1) v(z_2) 	r_1^{2\alpha -n} r_2^{-n-1} \widetilde \Gamma (z_1,z_2) dz_1 dz_2,
 	$$
 	which is admissible by Lemma~\ref{lem:admissible}. 
 	
 	It remains to consider the contributions of $\mR^+_0(\lambda^{2\alpha })\Gamma_1(\lambda) $ and of $ E(\lambda) \Gamma(0)$.
 	The former can be written as 
 	$$
 	\int_{\R^{2n}}  \frac{v(z_1) v(z_2)\chi_{r_2 \gg \la r_1\ra}}{r_1^{n-2\alpha }}  \int_0^\infty  e^{i  \lambda (r_1\pm r_2) } F(  \lambda r_1) \chi(\lambda)  \lambda^{n-1}   \Gamma_1(\lambda)(z_1,z_2)  F_\pm(\lambda r_2)  \ d\lambda dz_1 dz_2 .
 	$$
 	When $\lambda r_2\ll1$, using $|\Gamma_1(\lambda)|\les \lambda^\eta \widetilde \Gamma$, which follows from the mean value theorem, and Proposition~\ref{prop:F} to directly integrate in $\lambda$, we obtain the bound
 	$$
 	\int_{\R^{2n}}  \frac{v(z_1) v(z_2)\chi_{r_2 \gg \la r_1\ra}}{r_1^{n-2\alpha }r_2^{n+\eta}}\widetilde \Gamma(z_1,z_2)    dz_1 dz_2,
 	$$
 	which is admissible by Lemma~\ref{lem:admissible} as $\eta>0$. When $\lambda r_2 \gtrsim 1$, integrating by parts $N=\lceil n/2\rceil +1$ times, we have the bound
 	\be\label{K3former}
 	\int_{\R^{2n}}  \frac{v(z_1) v(z_2)\chi_{r_2 \gg \la r_1\ra}}{r_1^{n-2\alpha }|r_1\pm r_2|^N}  \int_0^\infty  \Big| \partial_\lambda^N \big[ F(  \lambda r_1) \chi(\lambda) \widetilde \chi(\lambda r_2)  \lambda^{n-1}   \Gamma_1(\lambda)(z_1,z_2)  F_\pm(\lambda r_2)\big]\Big|  \ d\lambda dz_1 dz_2 .
 	\ee
 	We estimate the $\lambda $ integral by (noting that $\lambda^{j_3} |\partial^{j_3}_\lambda \Gamma_1|\les \lambda^\eta \widetilde \Gamma$ and using Proposition~\ref{prop:F})
 	$$
 	\les r_2^{-\frac{n-1}2} \widetilde\Gamma(z_1,z_2)  \sum_{0\leq j_1+j_2+j_3+j_4\leq N, \, j_i\geq 0  }  
 	\int_{\frac1{r_2}}^1  \la  \lambda r_1\ra^{\frac{n+1}2-2\alpha } \lambda^{-j_1} \lambda^{n-1-j_2}      \lambda^{\eta-j_3}  \lambda^{-\frac{n-1}2-j_4} d\lambda 
 	$$
 	$$
 	\les r_2^{-\frac{n-1}2} \widetilde\Gamma(z_1,z_2)   
 	\int_{\frac1{r_2}}^1  \la  \lambda r_1\ra^{\frac{n+1}2-2\alpha }   \lambda^{\frac{n+1}2-\lceil \frac{n}2\rceil +\eta-2} d\lambda 
 	$$
 	$$
 	\les r_2^{-\frac{n-1}2} \widetilde\Gamma(z_1,z_2) 
 	\Big(\int_{\frac1{r_2}}^{\min(\frac1{r_1},1)}  \lambda^{\eta -\{ \frac{n}2\}-\frac32  }      d\lambda +  \int_{\min(\frac1{r_1},1)}^1 r_1^{\frac{n+1}2-2\alpha }  \lambda^{\frac{n}2-2\alpha+\eta -\{ \frac{n}2\}-1}       d\lambda  \Big).
 	$$
For $0<\eta\ll 1$,  	the first integral is at most $r_2^{-\eta +\{ \frac{n}2\}+\frac12}$. Its contribution to \eqref{K3former} is at most
 	$$
 	\int_{\R^{2n}}  \frac{v(z_1) v(z_2)\chi_{r_2 \gg \la r_1\ra}}{r_1^{n-2\alpha }r_2^{n+\eta}}\widetilde \Gamma(z_1,z_2)    dz_1 dz_2,
 	$$
 	which is admissible by Lemma~\ref{lem:admissible}. Similarly,  the second integral is bounded by  $r_1^{n-2\alpha } r_1^{\frac12+\{\frac{n}2\}}$ after multiplying the integrand by $(\lambda r_1)^{\frac{n-1}2}$ (assuming that $\eta\leq n-2\alpha$). Contribution of this to \eqref{K3former} is at most
 	$$
 	\int_{\R^{2n}}  \frac{v(z_1) v(z_2)\chi_{r_2 \gg \la r_1\ra} \la r_1\ra^{\frac12+\{\frac{n}2\}}}{ r_2^{n+\frac12+\{\frac{n}2\}}}\widetilde \Gamma(z_1,z_2)    dz_1 dz_2,
 	$$
 	which, by Lemma~\ref{lem:admissible}, is also admissible.

 	We now consider the contribution of $E(\lambda)\Gamma(0)$:  
 	\be\label{egamma}
 	\int_{\R^{2n}}   v(z_1) v(z_2)\chi_{r_2 \gg \la r_1\ra}\Gamma(0)(z_1,z_2)  \int_0^\infty  e^{\pm  i  \lambda   r_2 }  E(\lambda)(r_1) \chi(\lambda)   \lambda^{n-1}  F_\pm(\lambda r_2)  \ d\lambda dz_1 dz_2 .
 	\ee
 	Using Proposition~\ref{prop:F} and Corollary~\ref{cor:E} when $\lambda r_1\ll 1$.  Using this 
 	when $\lambda r_2\ll 1$ and using $|\Gamma(0)(z_1,z_2)|\leq \widetilde\Gamma(z_1,z_2)$, we bound \eqref{egamma} by direct estimate by 
 	$$
 	\int_{\R^{2n}}  \frac{ v(z_1) v(z_2)\chi_{r_2 \gg \la r_1\ra}\widetilde\Gamma(z_1,z_2) }{r_1^{n-2\alpha -\eta} r_2^{n+\eta} }  dz_1 dz_2,
 	$$  
 	which is admissible by Lemma~\ref{lem:admissible} since $\eta , n-2\alpha >0$. 
 	
 	When $\lambda r_2\gtrsim 1$ and $\lambda r_1 \ll 1$, we integrate by parts $N=\lceil n/2\rceil +1  $ times to obtain
 	$$
 	\int_{\R^{2n}}   r_2^{ -N}  v(z_1) v(z_2)\chi_{r_2 \gg \la r_1\ra}\widetilde \Gamma (z_1,z_2)   \int_0^\infty  \Big|\partial_\lambda^N\big[  E(\lambda)(r_1) \chi(\lambda)  \chi(\lambda r_1)  \widetilde  \chi(\lambda r_2)  \lambda^{n-1}  F_\pm(\lambda r_2)\big]\Big|  \ d\lambda dz_1 dz_2. 
 	$$
 	Using Corollary~\ref{cor:E} and Proposition~\ref{prop:F}, we bound this by 
 	$$
 	\int_{\R^{2n}}  r_2^{-N+\frac {1-n}2} r_1^{\eta+2\alpha -n} v(z_1) v(z_2)\chi_{r_2 \gg \la r_1\ra}\widetilde \Gamma (z_1,z_2)  \int_{r_2^{-1}}^1 \lambda^{\eta -\{ \frac{n}2\}-\frac32 }   \ d\lambda dz_1 dz_2 
 	$$
 	$$
 	\les  \int_{\R^{2n}}  r_2^{-n-\eta} r_1^{\eta+2\alpha -n} v(z_1) v(z_2)\chi_{r_2 \gg \la r_1\ra}\widetilde \Gamma (z_1,z_2)  dz_1 dz_2,
 	$$
 	which is again admissible by Lemma~\ref{lem:admissible}.

 	It remains to consider the case $\lambda r_1\gtrsim 1$. Integrating by parts once we rewrite the $\lambda$ integral in \eqref{egamma} as  
 	\begin{align}\label{eq:K3 bad1}
 		&\frac1{r_2} \int_0^\infty  e^{\pm  i  \lambda   r_2 }  \partial_\lambda[E(\lambda)(r_1)] \ \widetilde\chi(\lambda r_1) \chi(\lambda)   \lambda^{n-1}  F_\pm(\lambda r_2)  \ d\lambda \\
 		&+ \frac1{r_2} \int_0^\infty  e^{\pm  i  \lambda   r_2 }  E(\lambda)(r_1)\ \partial_\lambda \big[ \widetilde\chi(\lambda r_1) \chi(\lambda)   \lambda^{n-1}  F_\pm(\lambda r_2)  \big] \ d\lambda. \label{eq:K3 bad2}
 	\end{align}
 	For the second integral, \eqref{eq:K3 bad2},  we integrate by parts $N=\lceil \frac{n}2\rceil$ more times using  Proposition~\ref{prop:F}, to obtain the bound
 	$$
 	\frac1{r_2^{N+\frac{n}2+\frac12}} \sum_{j_1+j_2 \leq N,  \ 0\leq j_1, j_2} \int_{r_1^{-1}}^1  \big| \partial_\lambda^{j_1} [E(\lambda)(r_1)]\big|  \      \lambda^{\frac{n-3}2-j_2 }   \ d\lambda.  
 	$$
 	Using Corollary~\ref{cor:E} we bound this by
 	$$
 	\les  \frac1{r_2^{N+\frac{n}2+\frac12 }}\Big[\int_{r_1^{-1}}^1  r_1^{2\alpha -n}      \lambda^{\frac{n-3}2-N }   \ d\lambda +  \sum_{j_1+j_2 \leq N,  \ 0\leq j_1, j_2} \int_{r_1^{-1}}^1 r_1^{j_1+\frac{1-n}2}    \lambda^{n-2\alpha  -1- j_2 }   \ d\lambda\Big].
 	$$
 	The first integral takes care of the  additional term that arises in Corollary~\ref{cor:E} (for $\lambda r\gtrsim 1$)  in the case $j_1=0$.    Letting $\{n/2\}=  n/2-\lfloor n/2\rfloor$, we bound this by
 	
 	$$
 	\les \frac{r_1^{\{n/2\}+\frac12 +2\alpha -n}+r_1^{\{n/2\}+\frac12}}{r_2^{n+\{n/2\}+\frac12}}
 	\les \frac{r_1^{\{n/2\}+\frac12}}{r_2^{n+\{n/2\}+\frac12}},
 	$$
 	whose contribution is admissible   by Lemma~\ref{lem:admissible2} since $r_2\gg \la r_1\ra$. 
 	
 	For the first integral, \eqref{eq:K3 bad1}, we integrate by parts $N=\lceil \frac{n}2\rceil$ more times after pulling out the phase $e^{i\lambda r_1}$ to obtain the bound
 	$$\frac1{r_2^{ \frac{n}2+\frac12} |r_1\pm r_2|^N} \sum_{j_1+j_2 \leq N,  \ 0\leq j_1, j_2} \int_{r_1^{-1}}^1   \big| \partial_\lambda^{j_1}[\widetilde E(\lambda)(r_1)] \big| \   \lambda^{\frac{n-1}2-j_2 } \ d\lambda 
 	$$
 	$$
 	\widetilde E(\lambda)(r_1):= e^{- i\lambda r_1} \partial_\lambda[E(\lambda)(r_1)]
 	$$
 	Using the representation in Proposition~\ref{prop:F}, we bound this by
 	\begin{multline*}
 		\frac1{r_2^{ n+\{n/2\}+\frac12}  } \sum_{j_1+j_2 \leq N,  \ 0\leq j_1, j_2} \int_{r_1^{-1}}^1  r_1 \frac{(\lambda r_1)^{\frac{n+1}2-2\alpha }}{r_1^{n-2\alpha }}    \lambda^{\frac{n-1}2-j_1-j_2 } \ d\lambda \\
 		\les \frac1{r_2^{ n+\{n/2\}+\frac12}  r_1^{\frac{n-3}2} }  \int_{r_1^{-1}}^1 \lambda^{\frac{n }2-2\alpha  -\{n/2\} } \ d\lambda
 		\les  \frac{r_1^{\{n/2\}+\frac12}}{r_2^{n+\{n/2\}+\frac12}},
 	\end{multline*}
 	which is admissible by Lemma~\ref{lem:admissible2}.
 \end{proof}

\section{Proof of Lemma~\ref{lem:Gamma}}\label{sec:Minv}
For small energies, it remains only to prove Lemma~\ref{lem:Gamma} stating  that the operators $\Gamma_\ell(\lambda)$ defined in \eqref{Gammalambda} satisfy the bounds needed to apply Proposition~\ref{lem:low tail low d}. This follows, with some modifications, from the discussion preceeding Lemma~3.5 in \cite{EGWaveOp}, also see Section 4 of \cite{EGWaveOp2}.  We briefly sketch the argument here.  One of the differences here is that $n-2\alpha$ can be close to zero, in which case we cannot gain a full power of $\lambda$ in the bound \eqref{R_k} below.  As discussed after the statement of Proposition~\ref{lem:low tail low d}, we utilize a parameter $\eta>0$ to account for this positive difference.

We write $n_{\star}$ to denote $n+4$ if $n$ is odd and $n+3$ if $n$ is even. By Corollary~\ref{cor:Rj}, for a sufficiently small $\eta>0$, the operator $R_j$ with kernel
\be\label{R_k}
R_j(x,y):=v(x)v(y)\sup_{0<\lambda<1}|\lambda^{\max(0,j-\eta)} \partial_\lambda^j \mR_0^+(\lambda^{2\alpha })(x,y)|
\ee
is bounded on $L^2(\R^n)$ for $0\leq j\leq \lceil \frac{n}2\rceil+1$ provided that $|V(x)|\les \la x \ra^{-\beta}$ for some $\beta>n_{\star}$.   This follows the argument from Section 4 of \cite{EGWaveOp2}.

Similarly by Corollary~\ref{cor:E}, $\mathcal E(\lambda):=v[\mR^+_0(\lambda^{2\alpha })-\mR^+_0(0)]v$ satisfies 
$$
\|\mathcal E(\lambda)\|_{L^2\to L^2} \les \lambda^\eta.
$$

	Now, we define the operator
	$$
	T_0:=U+v\mR_0^+(0)v=M^+(0).
	$$
	By the assumption that zero energy is regular, $T_0$ is invertible with absolutely bounded inverse, see \cite{EGG frac disp}. Note that by a Neumann series expansion and the invertibility of $T_0$ we have 
	$$
	[M^+(\lambda)]^{-1} =\sum_{k=0}^\infty (-1)^k T_0^{-1} (\mE(\lambda)T_0^{-1})^{k}.
	$$
	The series converges in the operator norm on $L^2$ for sufficiently small $\lambda$. 
 By the resolvent identity the operator
	$ \partial_\lambda^N[M^+(\lambda)]^{-1} $ is a linear combination of operators of the form 
	$$
	[M^+(\lambda)]^{-1} \prod_{j=1}^J\big[v\big( \partial_\lambda^{N_j}\mR_0^{+}(\lambda^{2\alpha })\big)v[M^+(\lambda)]^{-1}\big],
	$$
	where $\sum N_j=N$ and each $N_j\geq 1$. From the discussion above on $R_j$'s  this representation implies  that  
	\be\label{eq:MGamma}
 \sup_{0<\lambda<\lambda_0}\lambda^{\max(0,N-\eta)} | \partial_\lambda^N[M^+(\lambda)]^{-1}(x,y)|
	\ee
	is bounded in $L^2$ for $N=0,1,\ldots,\lceil \frac{n}{2}\rceil+1$ provided that $\beta >n_\star$.   

Recalling the definition of $\Gamma_\ell(\lambda)$, \eqref{Gammalambda}, and noting the $L^2$ boundedness of $R_j$'s above we see that
$$ \sup_{0<\lambda<\lambda_0}\lambda^{\max(0,N-\eta)} \big| \partial_\lambda^N \big(Uv\mR_0^+(\lambda^{2\alpha }) \big(V\mR_0^+(\lambda^{2\alpha })\big)^{\ell-1} v \big)(x,y)\big|
$$
is bounded on $L^2$. This yields   Lemma~\ref{lem:Gamma}	when $2\alpha <n<4\alpha $. 

For  $n\geq4\alpha $, Lemma~\ref{lem:Gamma} follows by writing
$$
\Gamma_k(\lambda)=Uv A(\lambda)vM^{-1}(\lambda)v A(\lambda)vU,
$$
where
\begin{align}\label{eq:Alambda}
	A(\lambda, z_1,z_2) =  \big[ \big(\mR_0^+(\lambda^{2\alpha })  V\big)^{\ell-1}\mR_0^+(\lambda^{2\alpha })\big](z_1,z_2).
\end{align}  

If   $ \ell-1 $ is sufficiently large depending on $n,\alpha$ and $|V(x)|\les \la x \ra^{-\frac{n_\star}2-}$, then 
\begin{align*}
	\sup_{0<\lambda <1}| \lambda^{\max\{0,k-\eta\}} \partial_\lambda^k 	A(\lambda, z_1,z_2)|&\les \la z_1 \ra^{\frac32+\{\frac{n}2\}}  \la z_2\ra^{\frac32+\{\frac{n}2\}},
\end{align*}
for $0\leq \ell\leq \lceil \frac{n}2\rceil+1$. This follows from the pointwise bounds on $R_j$ above. The iteration of the resolvents smooths out the local singularity $|x-\cdot|^{2\alpha -n}$.  Each iteration improves the local singularity by $2\alpha $, so that after $j$ iterations the local singularity is of size $|x-\cdot|^{2\alpha j-n}$.  Selecting $\ell-1$ large enough ensures that the local singularity is completely integrated away.  This yields Lemma~\ref{lem:Gamma}, see \cite{EGWaveOp,EGG frac disp} for more details.

We recall Lemmas~4.1 and 4.2 from \cite{EGWaveOp2}, which we used in the proof of Proposition~\ref{lem:low tail low d}, which we state here for the convenience of the reader.
\begin{lemma}\label{lem:admissible}
	Let $K$ be an operator with integral kernel $K(x,y)$ that satisfies the bound
	$$
		|K(x,y)|\les \int_{\R^{2n}} \frac{v(z_1)v(z_2) \widetilde \Gamma(z_1,z_2) \chi_{\{|y-z_2|\gg \la z_1-x\ra \} }}{|x-z_1|^{n-2\alpha -k} |z_2-y|^{n+\ell} } \, dz_1 \, dz_2
	$$
	for some $0\leq k\leq n-2\alpha $ and $\ell>0$.  Then, under the hypotheses of Lemma~\ref{lem:low tail low d}, the kernel of $K$ is admissible, and consequently $K$ is a bounded operator on $L^p(\R^n)$ for all $1\leq p\leq \infty$.
	
\end{lemma}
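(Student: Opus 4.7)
The goal is to verify that $\sup_x \int |K(x,y)|\,dy$ and $\sup_y \int |K(x,y)|\,dx$ are both finite; the Schur test then gives $L^p$-boundedness for all $1\leq p\leq \infty$. The strategy in each case is to integrate the outer variable first, exploiting the geometric restriction $|y-z_2|\gg \la z_1-x\ra \geq 1$ imposed by the cutoff.

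For the $y$-integration, interchanging the order and computing
\[
\int_{|y-z_2|\gg \la z_1-x\ra} \frac{dy}{|z_2-y|^{n+\ell}} \les \la z_1-x\ra^{-\ell},
\]
reduces matters to bounding $\int v(z_1)v(z_2)\widetilde\Gamma(z_1,z_2)|x-z_1|^{-(n-2\alpha-k)}\la z_1-x\ra^{-\ell}\,dz_1\,dz_2$ uniformly in $x$. In the regime $2\alpha<n<4\alpha$, I would view this as the pairing $\la f_x,\widetilde\Gamma v\ra_{L^2}$ with $f_x(z_1)=v(z_1)|x-z_1|^{-(n-2\alpha-k)}\la z_1-x\ra^{-\ell}$, apply Cauchy--Schwarz and the $L^2$-boundedness of $\widetilde\Gamma$, and check $\sup_x\|f_x\|_{L^2}<\infty$: the local singularity at $z_1=x$ is $L^2$-integrable since $2(n-2\alpha-k)<n$ (which holds since $n<4\alpha$ and $k\geq 0$), and the polynomial decay $|V|\les\la\cdot\ra^{-\beta}$ with $\beta>n$ ensures $v\in L^2$ and controls the tails. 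In the regime $n\geq 4\alpha$, the pointwise bound $\widetilde\Gamma(z_1,z_2)\les \la z_1\ra^{-n/2-}\la z_2\ra^{-n/2-}$ separates the integral: $\int v(z_2)\la z_2\ra^{-n/2-}\,dz_2<\infty$ follows from $\beta>n$, and the remaining $z_1$ integral is uniformly bounded in $x$ by the same integrability considerations.

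For the $x$-integration, the inner integral (with $u=x-z_1$) is
\[
\int_{\la u\ra \ll |y-z_2|} \frac{du}{|u|^{n-2\alpha-k}} \les \chi_{|y-z_2|\gtrsim 1}\,|y-z_2|^{2\alpha+k};
\]
indeed when $|y-z_2|\les 1$ the restriction $\la u\ra\ll |y-z_2|$ is incompatible with $\la u\ra\geq 1$ so the factor vanishes, and otherwise $|u|\les |y-z_2|$ and the local integrability uses $2\alpha+k>0$. Multiplying by $|z_2-y|^{-n-\ell}$ produces the factor $\chi_{|y-z_2|\gtrsim 1}|y-z_2|^{-(n+\ell-2\alpha-k)}$, which is at most $1$ on its support since $n+\ell-2\alpha-k\geq \ell>0$ (using $k\leq n-2\alpha$). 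Hence the $x$-integral is bounded by $\int v(z_1)v(z_2)\widetilde\Gamma(z_1,z_2)\,dz_1\,dz_2$ uniformly in $y$; in the low-dimensional regime this is $\les \|v\|_{L^2}^2\|\widetilde\Gamma\|_{L^2\to L^2}$, and in the high-dimensional regime the pointwise bound on $\widetilde\Gamma$ factorizes it into a product of two finite single integrals.

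The principal geometric input is the restriction $|y-z_2|\gg\la z_1-x\ra\geq 1$, which simultaneously produces the finite-size bound $|y-z_2|^{2\alpha+k}$ from the inner $x$-integral and forces $|y-z_2|\gtrsim 1$, eliminating any local singularity at $y=z_2$. The main obstacle is establishing uniform $L^2$ control of $f_x$ in the low-dimensional regime, which requires simultaneously handling the local singularity at $z_1=x$ (using $k\leq n-2\alpha$ and $n<4\alpha$) and the polynomial tails at infinity (using $\beta>n$); once this is in place the rest is bookkeeping.
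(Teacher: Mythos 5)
Your proof is correct: both Schur-test integrals are controlled exactly as you describe, with the cutoff $|y-z_2|\gg\la z_1-x\ra\geq 1$ producing the gain $\la z_1-x\ra^{-\ell}$ (and forcing $|y-z_2|\gtrsim 1$ so the factor $|y-z_2|^{2\alpha+k-n-\ell}$ is bounded via $k\leq n-2\alpha$), while the local singularity $|x-z_1|^{-(n-2\alpha-k)}$ is handled by $n<4\alpha$ together with $L^2$-boundedness of $\widetilde\Gamma$ in the low-dimensional regime and by the pointwise decay $\widetilde\Gamma(z_1,z_2)\les\la z_1\ra^{-n/2-}\la z_2\ra^{-n/2-}$ when $n\geq 4\alpha$. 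The paper does not reprove this lemma, quoting it from Lemmas 4.1 and 4.2 of \cite{EGWaveOp2}, and your direct verification is essentially the same argument as in that reference.
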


\begin{lemma}\label{lem:admissible2}
	
	Let $K$ be an operator with integral kernel $K(x,y)$ that satisfies the bound
	$$
	|K(x,y)|\les \int_{\R^{2n}} \frac{v(z_1)v(z_2) \widetilde \Gamma(z_1,z_2) \chi_{\{|y-z_2|\gg \la z_1-x\ra \}} |x-z_1|^{\ell} }{  |z_2-y|^{n+\ell} } \, dz_1 \, dz_2
	$$
	for some $\ell>0$.  Then, under the hypotheses of Lemma~\ref{lem:low tail low d}, the kernel of $K$ is admissible, and consequently $K$ is a bounded operator on $L^p(\R^n)$ for all $1\leq p\leq \infty$.
\end{lemma}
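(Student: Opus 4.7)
The plan is to establish admissibility of $K(x,y)$ directly by the Schur-test criterion, following the same template used in the proof of Lemma~\ref{lem:admissible}. Since the stated bound on $|K(x,y)|$ involves a non-negative integrand, Tonelli allows me to interchange the outer $x$- or $y$-integration with the $(z_1,z_2)$-integration, reducing matters to estimating the single-variable integrals of $|x-z_1|^\ell$ and $|z_2-y|^{-n-\ell}$ against the indicator $\chi_{\{|y-z_2| \gg \la z_1-x\ra\}}$.

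For the $y$-integration, on the support of the indicator one has $|y-z_2| \gtrsim \la z_1-x\ra$, so
$$
\int_{|y-z_2| \gtrsim \la z_1-x\ra} |z_2-y|^{-n-\ell}\, dy \les \la z_1-x\ra^{-\ell}.
$$
Combined with the factor $|x-z_1|^\ell \leq \la z_1-x\ra^\ell$, this reduces the inner factor to $O(1)$. The analogous argument for the $x$-integration uses that the same indicator forces $|x-z_1| \les |y-z_2|$, so
$$
\int_{|x-z_1| \les |y-z_2|} |x-z_1|^\ell\, dx \les |y-z_2|^{n+\ell},
$$
which cancels cleanly against the $|z_2-y|^{-n-\ell}$ in the denominator. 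In either order of integration I therefore arrive at
$$
\sup_x \int |K(x,y)|\, dy + \sup_y \int |K(x,y)|\, dx \les \int_{\R^{2n}} v(z_1)\, v(z_2)\, \widetilde{\Gamma}(z_1,z_2)\, dz_1\, dz_2.
$$

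The final step is to show that the right-hand side is finite under the hypotheses of Proposition~\ref{lem:low tail low d}. When $2\alpha < n < 4\alpha$, the operator with kernel $\widetilde{\Gamma}$ is bounded on $L^2$ by hypothesis, while $v=|V|^{1/2} \in L^2$ follows from $|V(x)|\les \la x\ra^{-\beta}$ with $\beta>n_\star>n$; Cauchy--Schwarz then gives $\la v,\widetilde{\Gamma} v\ra \les \|v\|_{L^2}^2 \, \|\widetilde{\Gamma}\|_{L^2\to L^2} < \infty$. When $n\geq 4\alpha$, the pointwise bound $\widetilde{\Gamma}(z_1,z_2)\les \la z_1\ra^{-n/2-}\la z_2\ra^{-n/2-}$ combines with the decay of $v$ to make the double integral manifestly finite. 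Admissibility of $K$ then yields $L^p$-boundedness for every $1\leq p\leq \infty$ via the Schur test. There is no serious obstacle in this argument; the only point requiring care is using the indicator $\chi_{\{|y-z_2|\gg \la z_1-x\ra\}}$ in the correct direction depending on whether one integrates in $y$ or in $x$.
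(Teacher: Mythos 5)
Your argument is correct: the two radial estimates $\int_{|y-z_2|\gtrsim \la z_1-x\ra}|z_2-y|^{-n-\ell}\,dy\les \la z_1-x\ra^{-\ell}$ and $\int_{|x-z_1|\les |y-z_2|}|x-z_1|^{\ell}\,dx\les |y-z_2|^{n+\ell}$ reduce both Schur-test quantities to $\int v(z_1)\widetilde\Gamma(z_1,z_2)v(z_2)\,dz_1\,dz_2$, which is finite by Cauchy--Schwarz and $v\in L^2$ when $2\alpha<n<4\alpha$ (using that $\widetilde\Gamma$ has a nonnegative kernel and is $L^2$-bounded), and by the pointwise bound \eqref{eq:tildegamma} together with the decay of $v$ when $n\geq 4\alpha$. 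The paper itself does not prove this lemma but imports it from \cite{EGWaveOp2} (Lemma~4.2 there); your self-contained computation is exactly the expected one, the only cosmetic slip being the citation of $\beta>n_{\star}$ where the hypotheses of Proposition~\ref{lem:low tail low d} only require $\beta>n$, which is all that is needed for $v\in L^2$.
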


\section{High Energy}\label{sec:high}

Since we can control the contribution of the Born series to arbitrary length, and the low energy portion of the tail of the series in \eqref{eqn:born identity}, we need only consider the tail when $\lambda\gtrsim 1$ and show that
$$
	\int_0^\infty \widetilde \chi(\lambda)\lambda^{2\alpha-1}[(\mathcal R_0^+ V)^\ell V\mathcal R_V^+ (V\mathcal R_0^+)^\ell V\mathcal R_0^\pm ](\lambda^{2\alpha})\, d\lambda
$$
extends  to  bounded operators on $L^p(\R^n)$ provided $\ell$ is sufficiently large.  In all cases we assume there are no positive eigenvalues of $H$.  The argument   is identical to that in \cite{EGWaveOp} for the integer order Schr\"odinger case using the bounds on the resolvents in Proposition~\ref{prop:F} and the limiting absorption principle in Proposition~\ref{prop:LAP}, which are tailored to the fractional Schr\"odinger operators.  This allows us to pointwise dominate the integral kernel of the tail of the Born series with an admissible kernel:

\begin{prop}\label{prop:high odd}
	
	We have the bound 
	\begin{multline}\label{eqn:bs tail int}
		\bigg|\int_0^\infty \widetilde \chi(\lambda)\lambda^{{2\alpha-1}}
		(\mathcal R_0^+(\lambda^{2\alpha }) V)^{\ell +1}  \mathcal R_V^+(\lambda^{2\alpha })V (\mathcal R_0^+(\lambda^{2\alpha })V)^{\ell } \mathcal R_0^\pm (\lambda^{2\alpha })(x,y)\, d\lambda\bigg|\\
		\les \frac{1}{\la |x|-|y|\ra^{\frac{n+3}{2}} \la x\ra^{\frac{n-1}{2} } \la y \ra^{\frac{n-1}{2}}},
	\end{multline}
	provided $\ell $ is sufficiently large, and $|V(x)|\les \la x\ra^{-\beta}$ for some $\beta>n_\star$.
	
\end{prop}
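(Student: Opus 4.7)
The plan is to follow the integer-order template of \cite{EGWaveOp}, substituting Propositions~\ref{prop:F} and~\ref{prop:LAP} for the corresponding classical resolvent and limiting absorption estimates. I would begin by expanding the kernel on the left of \eqref{eqn:bs tail int} as an iterated spatial integral over intermediate vertices $\{z_j\}$, with the single perturbed resolvent $\mR_V^+(\lambda^{2\alpha})$ sandwiched between two chains of $V\mR_0^+$ factors. Applying Proposition~\ref{prop:F} to each free resolvent, $\mR_0^+(\lambda^{2\alpha})(u,v) = |u-v|^{2\alpha-n} e^{i\lambda|u-v|} F(\lambda|u-v|)$, and using the analogous formula with $F_\pm$ for the terminal $\mR_0^\pm$, the $\lambda$-integrand decomposes as $e^{i\lambda\Phi}\,A(\lambda;\vec z\,)$, where $\Phi$ is a signed sum of consecutive vertex distances with coefficients $+1$ except possibly on the last segment.

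Next I would integrate by parts $N=\lceil(n+3)/2\rceil$ times in $\lambda$, pushing $e^{i\lambda\Phi}$ aside and gaining the factor $1/\Phi^N$. The $\lambda$-derivatives land on three types of factors: (i)~the smooth amplitudes $\lambda^{2\alpha-1}\widetilde\chi(\lambda)$, $F(\lambda r)$, and $F_\pm(\lambda r)$, whose sizes are controlled by \eqref{Fbounds nbig}; (ii)~the central operator $\mR_V^+(\lambda^{2\alpha})$, whose $\lambda$-derivatives satisfy the weighted LAP estimate of Proposition~\ref{prop:LAP}, bringing $\lambda^{1-2\alpha}$ decay at the cost of weights $\la z\ra^{j+1/2+}$ that are absorbed by the adjacent $V$-factors thanks to $\beta>n_\star$; and (iii)~powers of $\lambda$ from the chain rule on $\lambda^{2\alpha}$. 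Collecting the resulting powers, the combined $\lambda$-integrand for $\lambda\gtrsim 1$ has size at most $C\lambda^{n-2\alpha-N}$, which is integrable on $[1,\infty)$ for $N\geq \lceil(n+3)/2\rceil$ since $\alpha>1$.

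For the spatial integrals I would take $\ell$ large enough that, as in Lemma~\ref{lem:Gamma}, the iteration $V\mR_0^+\cdots V\mR_0^+$ smooths out each internal singularity $|z_{j-1}-z_j|^{2\alpha-n}$: each pass through $V\mR_0^+$ improves the local singularity by $2\alpha$, so after enough iterations all internal integrals converge absolutely and pair against the weighted $L^2$ norm produced by the LAP step. The $\la\lambda r\ra^{-(n-1)/2}$ decay of the outermost free resolvents in Proposition~\ref{prop:F}, combined with $\lambda\gtrsim 1$ and the fact that adjacent potentials localize the outermost interior vertices near the origin, contributes the factor $\la x\ra^{-(n-1)/2}\la y\ra^{-(n-1)/2}$. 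The triangle inequality then gives $\Phi\gtrsim \la |x|-|y|\ra$, converting $\Phi^{-N}$ into the desired $\la |x|-|y|\ra^{-(n+3)/2}$ factor.

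The main obstacle will be treating the ``out-of-phase'' case in which the terminating resolvent is $\mR_0^-$: the sign flip on the last segment of $\Phi$ admits a codimension-one cancellation that obstructs the naive lower bound $\Phi\gtrsim \la |x|-|y|\ra$. Following the splitting used for $K_{2,-}$ in Proposition~\ref{lem:low tail low d}, I would partition the $\lambda$-range into the non-stationary regime $\lambda|\Phi|\gtrsim 1$, where integration by parts is performed, and the stationary regime $\lambda|\Phi|\les 1$, where $\la\lambda\Phi\ra\approx 1$ and the bound follows by direct estimation. Once this splitting is in place, the argument of \cite{EGWaveOp} transfers essentially verbatim and produces the admissible pointwise bound \eqref{eqn:bs tail int}.
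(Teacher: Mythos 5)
Your overall strategy is the one the paper intends---its proof simply defers to Propositions~5.3 and 6.5 of \cite{EGWaveOp}, replacing the integer-order resolvent expansions and limiting absorption bounds by Proposition~\ref{prop:F} and Proposition~\ref{prop:LAP}---so the architecture (outer resolvents treated pointwise via $F$, $F_\pm$ to produce $\la x\ra^{-(n-1)/2}\la y\ra^{-(n-1)/2}$; integration by parts in $\lambda$ to produce $\la |x|-|y|\ra^{-(n+3)/2}$; weighted-$L^2$ LAP for the central $\mR_V^+$; a stationary/non-stationary splitting for the $\mR_0^-$ phase) is correct. However, there is a genuine gap in your $\lambda$-bookkeeping. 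You claim the integrand is $O(\lambda^{n-2\alpha-N})$ and that this is integrable for $N=\lceil (n+3)/2\rceil$ ``since $\alpha>1$''; with that $N$ the exponent is at best $(n-3)/2-2\alpha$, which is $\geq -1$ as soon as $n\geq 4\alpha+1$. Since the proposition must cover every $n>2\alpha$ with no smoothness assumed on $V$, your estimate does not close in high dimensions. The missing ingredient is precisely where ``$\ell$ sufficiently large'' enters at high energy: each of the $\sim 2\ell$ inner factors $v\mR_0^+(\lambda^{2\alpha})v$, estimated in weighted $L^2$ by the free limiting absorption bound, contributes a factor $\lambda^{1-2\alpha}$, and it is these $2\ell(2\alpha-1)$ extra negative powers---not the integration by parts alone---that beat the growth $\lambda^{(n+1)/2-2\alpha}$ of each outer $F$-factor and make the $\lambda$-integral absolutely convergent. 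In your sketch $\ell$ is used only to smooth the spatial singularities, so as written the argument fails for $n\geq 4\alpha+1$.

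A second, smaller issue: taking $N=\lceil (n+3)/2\rceil$ derivatives in even dimensions forces $\partial_\lambda^{j}\mR_V$ with $j=\tfrac n2+2$, which by Proposition~\ref{prop:LAP} requires $\beta>n+5$, more than the assumed $\beta>n_\star=n+3$. In even dimensions one should integrate by parts only up to $\lfloor (n+3)/2\rfloor$ times and recover the remaining half power of $\la |x|-|y|\ra$ by the H\"older-in-$\lambda$ (difference) device, exactly as in the even-dimensional argument of \cite{EGWaveOp} and as used for the low-energy term in Lemma~\ref{lem:Geps}; the derivative count and the corresponding decay demanded of $V$ should be adjusted accordingly.
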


The proof is a straightforward modification of the proof of Propositions~5.3 and 6.5 in \cite{EGWaveOp}.  
To complete the claim, we use Lemma~5.2 from \cite{EGWaveOp} (also see Lemma~3.1 in \cite{YajNew} and Lemma~2.1 in \cite{GG4wave}):
\begin{lemma}\label{lem:ptwise kernel}
	Suppose that $K$ is an integral operator whose kernel obeys the pointwise bounds
	\begin{align} \label{eqn:kernels}
	|K(x,y)|\les \frac{1}{\la x\ra^{\frac{n-1}{2}} \la y \ra^{\frac{n-1}{2}} \la |x|-|y|\ra^{\frac{n+1}{2}+\epsilon}}.
	\end{align}
	Then $K$ is admissible provided that $\epsilon>0$.
	
\end{lemma}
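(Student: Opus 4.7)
The plan is to verify the admissibility criterion directly, after which the Schur test gives $L^p$-boundedness for all $1\le p\le\infty$. Since the pointwise bound is symmetric in $x$ and $y$, it suffices to show
\[
\sup_{x\in\R^n}\int_{\R^n}|K(x,y)|\,dy<\infty,
\]
as the bound on $\sup_{y}\int|K(x,y)|\,dx$ then follows by an identical argument.

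First I would pull out the factor $\la x\ra^{-(n-1)/2}$, pass to polar coordinates $y=r\omega$ with $r=|y|$ and $\omega\in S^{n-1}$, and integrate over the sphere. What remains is the one-dimensional estimate
\[
I(|x|):=\int_0^\infty \frac{r^{n-1}}{\la r\ra^{(n-1)/2}\,\la |x|-r\ra^{(n+1)/2+\epsilon}}\,dr \;\les\; \la x\ra^{(n-1)/2},
\]
which combined with the prefactor gives the uniform bound.

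The main step is to decompose $[0,\infty)$ into three regions according to the size of $r$ relative to $|x|$. On the far region $\{r>2|x|\}$, one has $\la|x|-r\ra\approx r$ and $\la r\ra^{(n-1)/2}\approx r^{(n-1)/2}$, so the integrand is $\approx r^{-1-\epsilon}$ and the integral converges. On the diagonal region $\{|x|/2\le r\le 2|x|\}$, the factor $r^{n-1}/\la r\ra^{(n-1)/2}$ is $\approx \la x\ra^{(n-1)/2}$, and the change of variable $u=|x|-r$ reduces the remaining integral to $\int_{|u|\les|x|}\la u\ra^{-(n+1)/2-\epsilon}\,du$, which is uniformly bounded thanks to $(n+1)/2+\epsilon>1$ (valid in every dimension $n\ge 1$); this contributes $\les \la x\ra^{(n-1)/2}$. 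On the near region $\{r<|x|/2\}$, one has $\la|x|-r\ra\approx \la x\ra$, and the integral is dominated by $\la x\ra^{-(n+1)/2-\epsilon}\int_0^{|x|/2}r^{(n-1)/2}\,dr\les\la x\ra^{-\epsilon}$.

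The only real obstacle is the diagonal region, where both weights can be as small as $\la x\ra^{-(n-1)/2}$; here one must see that the $\la|x|-r\ra$ factor integrates to a constant independent of $|x|$, which is exactly what $\epsilon>0$ guarantees (a logarithmic divergence would otherwise appear in the borderline case $n=1$, $\epsilon=0$). Summing the three contributions yields $I(|x|)\les \la x\ra^{(n-1)/2}$, whence $\int|K(x,y)|\,dy\les 1$ uniformly in $x$. By the symmetric argument, the kernel is admissible, and the Schur test then delivers $L^p$-boundedness for every $1\le p\le\infty$.
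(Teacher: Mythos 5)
Your argument is correct, and it is essentially the standard proof: the paper itself does not prove this lemma but defers to Lemma~5.2 of \cite{EGWaveOp} (equivalently Lemma~3.1 of \cite{YajNew}, Lemma~2.1 of \cite{GG4wave}), where the same direct verification via polar coordinates and a three-way splitting of $r=|y|$ relative to $|x|$ is carried out. Two small imprecisions are worth fixing. First, in the far region $\{r>2|x|\}$ you write that the integrand is $\approx r^{-1-\epsilon}$; this is only true for $r\gtrsim 1$, and taken literally it would give $\int_{2|x|}^{\infty}r^{-1-\epsilon}\,dr\approx|x|^{-\epsilon}$, which is not uniform in $x$ as $|x|\to 0$. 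The correct statement is that on this region $\la|x|-r\ra\approx\la r\ra$, so the integrand is $\les r^{n-1}\la r\ra^{-n-\epsilon}\les\min(r^{n-1},r^{-1-\epsilon})$, which integrates to an absolute constant. Second, the role of $\epsilon>0$ is slightly misattributed: on the diagonal region the condition $\tfrac{n+1}{2}+\epsilon>1$ holds even with $\epsilon=0$ once $n\ge 2$ (which is the only case relevant here, since $n>2\alpha>2$); the place where $\epsilon>0$ is genuinely needed for every $n$ is the far region, where it ensures integrability of $r^{-1-\epsilon}$ at infinity. With these cosmetic corrections the three contributions sum to $I(|x|)\les\la x\ra^{(n-1)/2}$, and your symmetry argument then gives admissibility exactly as in the cited references.
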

The proposition and lemma imply that the kernel is admissible and hence the tail extends to a bounded operator on $L^p(\R ^n)$ for all $1\leq p\leq \infty$.

\section*{Data Availability}
 No data was used for the research described in the article.

\end{document}